\newcounter{ipotesi}
 \makeatletter \@addtoreset{equation}{section}
\newtheorem{thm}{Theorem}[section]
\newtheorem{hyp}[thm]{Hypotheses}{\rm}
{\rm}
\newtheorem{lemm}[thm]{Lemma}
\newtheorem{cor}[thm]{Corollary}
\newtheorem{prop}[thm]{Proposition}
\newtheorem{defi}[thm]{Definition}
\newtheorem{rmk}[thm]{Remark}{\rm}
\newtheorem{example}[thm]{Example}
\newcounter{parentenv}
\newcommand{\R}{{\mathbb R}}
\newcommand{\E}{{\mathbb E}}
\newcommand{\N}{{\mathbb N}}
\newcommand{\K}{{\mathcal{K}}}
\newcommand{\X}{{\mathcal{X}}}
\newcommand{\J}{{\mathcal{D}}}
\newcommand{\eps}{\varepsilon}
\newcommand{\ra}{\rightarrow}
\newcommand{\ol}[1]{\overline{#1}}
\newcommand{\Tr}{{\operatorname{Tr}}}
\newcommand{\Dom}{{\operatorname{Dom}}}
\newcommand{\Id}{{\operatorname{Id}}}
\newcommand{\set}[1]{{\left\{#1\right\}}}
\newcommand{\pa}[1]{{\left(#1\right)}}
\newcommand{\sq}[1]{{\left[#1\right]}}
\newcommand{\gen}[1]{{\left\langle #1\right\rangle}}
\newcommand{\abs}[1]{{\left|#1\right|}}
\newcommand{\norm}[1]{{\left\|#1\right\|}}
\newcommand{\scal}[2]{{\left\langle #1,#2\right\rangle}}
\newcommand{\eqsys}[1]{{\left\{\begin{array}{ll}#1\end{array}\right.}}
\newcommand{\tc}{\, \middle |\,}
\begin{document}

\frenchspacing

\title[Logarithmic Harnack inequalities for transition semigroups in Hilbert spaces]{Logarithmic Harnack inequalities for transition semigroups in Hilbert spaces}
\author[L. Angiuli, D. A. Bignamini and S. Ferrari ]{Luciana Angiuli, Davide A. Bignamini$^*$, Simone Ferrari}\thanks{$^*$Corresponding author}
\address{L.A. \& S.F.:  Dipartimento di Matematica e Fisica ``Ennio De Giorgi'', Universit\`a del Salento, Via per Arnesano, I-73100 LECCE, Italy}
\address{D.A.B.: Dipartimento di Scienze Matematiche, Fisiche e Informatiche, Plesso di Mate\-matica, Universit\`a degli Studi di Parma, Parco Area delle Scienze 53/A, I-43124 PARMA, Italy}
\email{luciana.angiuli@unisalento.it}
\email{davideaugusto.bignamini@unimore.it}
\email{simone.ferrari@unisalento.it}
\keywords{Infinite dimensional analysis, transition semigroup, logarithmic Harnack inequality.}
\subjclass[2010]{60H10, 60J60.}

\date{\today}

\begin{abstract} We consider the stochastic differential equation
\begin{align*}
\eqsys{dX(t)=[AX(t)+F(X(t))]dt+C^{1/2}dW(t), & t>0;\\
X(0)=x \in \X;}
\end{align*}
where $\X$ is a Hilbert space, $\{W(t)\}_{t\geq 0}$ is a $\X$-valued cylindrical Wiener process, $A, C$ are suitable operators on $\X$ and $F:\Dom(F)\subseteq\X\to \X$ is a smooth enough function. We establish a logarithmic Harnack inequality for the transition semigroup $\{P(t)\}_{t\geq 0}$ associated with the stochastic problem above, under less restrictive conditions than those considered in the literature. Some applications to these inequalities are also shown.
\end{abstract}

\maketitle

\section{Introduction}
The first formulation of the Harnack inequality dates back to $1887$ and can be found in his seminal paper \cite{HAR87},  and concerns positive harmonic functions.
After some partial extensions, the most important contribution is due to J. Moser \cite{MOS61}
which proved the Harnack inequality for positive (weak) solutions of uniformly elliptic linear equations with bounded coefficients in
variational form. Moser also stresses the usefulness of such kind of estimates to deduce regularity results, such as the local h\"olderianity of the solutions. The further passage towards non-linear elliptic equations was made first by J. Serrin \cite{SER55} and then by N.S. Trudinger \cite{TRU} a few years later, and is based on Moser's approach.

The first parabolic version of the Harnack inequality is proved
separately from J. Hadamard \cite{HAD} and B. Pini \cite{PIN} for positive solutions of the heat equation. Many years later this kind of estimates have been extended to positive solutions of more general linear parabolic equations by Moser himself \cite{MOS67}. Hence the extension
to almost linear parabolic equations was due to D. G. Aronson and J. Serrin \cite{AROSERR} and N.S. Trudinger \cite{TRU}. Differently from the elliptic case, however, the
case of operators with non-linear coefficients turned out to be more difficult and remained unresolved for a long time. In this direction we refer to \cite{DiB1,DiB2} where an \emph{intrinsic} Harnack type inequality was proved for solutions of a large class of nonlinear equations and for operators with nonlinear coefficients.
The techniques used in these latter results were inspired by the method of
E. De Giorgi and J. Nash (see \cite{DEGIO57,NASH57}) to show boundedness and regularity for certain classes of
functions (the so-called De Giorgi classes), which contain in particular
the solutions of some elliptic equations.

We refer to \cite{KAS07} and the reference therein for a more in-depth analysis of the Harnack inequality.
In all the quoted results, the formulation of the Harnack inequality allows to compare the values of a positive solution of some elliptic or parabolic differential equation, at two different points. All these Harnack inequalities are dimension-dependent and thus they cannot pass to infinite dimension.
A possibility to get the Harnack-type inequality in an infinite dimensional setting consists in replacing the classical formulation to
the dimension-free logarithmic Harnack Inequality (LHI) firtst introduced by F.-Y. Wang in \cite{WANG97} for the study of diffusion semigroups on a Riemannian manifold $M$. It reads as
\begin{align}\label{HAR_intr}
(P(t)f)^\alpha(x)\leq (P(t)f^\alpha)(x)e^{c(t)\rho(x,y)},\qquad t>0,\ x,y\in M
\end{align}
which holds true for any positive and Borel bounded function $f$, any $\alpha>1$ and some continuous function $c(t)$. Here $\rho$ is a Riemannian metric on $M$. Also in the infinite dimensional setting, this kind of inequality has been used to obtain a lot of results, like some regularizing effects of the semigroup (see, for example, \cite[Proposition 4.1]{DaPROWA09}, \cite[Corollary 1.2]{RO-WA03} and \cite[Corollary 7.3.14]{WANG13}) as well as some hyperboundedness properties for the semigroup $\{P(t)\}_{t\geq 0}$ (see, for example, \cite{RO-WA03} and \cite{WANG04}). We refer to \cite{WANG13} and the reference therein for a discussion of this inequality and its consequences.

The aim of this paper consists in proving a (LHI) like \eqref{HAR_intr}, for transition semigroups associated to some stochastic partial differential equations in infinite dimensional separable Hilbert spaces under more less restrictive conditions than those considered in the literature.

We consider $(\Omega,\mathcal{F},\{\mathcal{F}_t\}_{t\geq0},\mathbb{P})$ a normal filtered probability space, $\X$ a separable Hilbert space with inner product $\gen{\cdot,\cdot}$ and associated norm $\norm{\cdot}$ and the stochastic partial differential equation
\begin{gather}\label{eqFint}
\eqsys{
dX(t)=[AX(t)+F(X(t))]dt+ C^{1/2}dW(t), & t> 0;\\
X(0)=x\in \X,
}
\end{gather}
where $\{W(t)\}_{t\geq 0}$ is a $\X$-valued cylindrical Wiener process, $C\in\mathcal{L}(\X)$ is a positive operator, $F:\Dom(F)\subseteq\X\ra\X$ is a regular enough function, and $A:\Dom(A)\subseteq \X\ra\X$ is a possibly unbounded operator. Throughout the paper we will assume some hypotheses on $A$, $F$ and $C$ to guarantee the existence of a mild solution for \eqref{eqFint} (see Hypotheses \ref{EU2} and \ref{HA}). Stochastic partial differential equations like \eqref{eqFint} are widely studied in the literature (see \cite{DA-ZA3,WANG13} and the references therein) as well as the validity of some (LHI) in Hilbert spaces for the associated semigroup
\begin{align}\label{semi_intr}
P(t)\varphi(x):=\mathbb{E}[\varphi(X(t,x))],\qquad x\in\X,\ t\geq 0,\ \varphi\in B_b(\X);
\end{align}
(see, for example, \cite{DaPROWA09,RO-WA1}). Here $\mathbb{E}[\cdot]$ denotes the expectation with respect to $\mathbb{P}$ and $B_b(\X)$ is the space of bounded and Borel measurable function on $\X$. Estimates like \eqref{HAR_intr} for the transition semigroup \eqref{semi_intr} can be found for instance in \cite{DaPROWA09,ES09,HUZA19,LVHU21,RO-WA1,WANGYUAN11}. In all the quoted papers two different sets of assumptions for $A, F$ and $C$ are made in order to get inequalities like \eqref{HAR_intr}.
 Concerning the operator $C$, it is required some sort of invertibility. In \cite{DaPROWA09,WANGYUAN11} it is required that $C$ itself admits continuous and bounded linear inverse. In \cite{HUZA19,LVHU21} it is assumed that $CC^*$ is invertible, while in \cite{ES09} the authors restrict themselves to consider as $C$ the identity operator.
 On the other hand, for what concerns the function $F$, it is usually required that it is Lipschitz continuous and that it satisfies the following dissipativity type condition:
\begin{align}\label{RO-WA_hyp}
\langle F(x)-F(y),C^{-1}(x-y) \rangle\leq \zeta\|C^{-1/2}(x-y)\|,\qquad x-y\in C(\X).
\end{align}
for some $\zeta \in \R$. This latter condition can be found in \cite{RO-WA1} where $C$ actually can depend also on $x$.

The main results of this paper are stated in Theorems \ref{lip_harnak} and \ref{Harnak_diss} where a (LHI) similar to \eqref{HAR_intr} is obtained for the transition semigroup \eqref{semi_intr} without any hypotheses of invertibility on $C$ and assuming only one between the Lipschitz and the dissipativity condition appearing in \eqref{RO-WA_hyp} (see Section \ref{liphar} for the Lipschitz continuous case and Section \ref{nonome} for the dissipative case).  To be more precise we will prove a (LHI) type inequality along the direction of the square root of the diffusion operator $C$, namely
\begin{align}\label{HAR_alongHC_intr}
|P(t)\varphi(x+h)|^p\leq P(t)|\varphi(x)|^pe^{c(t)\|C^{-1/2}h\|^2},\qquad t>0,\ x \in \X,\ h \in C^{1/2}(\X);
\end{align}
for any  bounded and Borel measurable function $\varphi:\X\ra\R$, any $p>1$ and some continuous function $c:(0,+\infty)\ra\R$. We point out that if $C$ has a continuous inverse than \eqref{HAR_alongHC_intr} is equivalent to \eqref{HAR_intr}.

The key tool we use to prove the (LHI) \eqref{lip_harnak_est} and \eqref{dae} in both cases is an approximation method. In the Lipschitz continuous case the approximants which allow us to get our estimate are suitable finite dimensional semigroups which satisfy suitable gradient estimates. On the other hand, the dissipative case is solved by using a double approximation procedure which consider a finite dimensional approximation of the Yosida approximants.

For our results, we have collected some standard consequences in Section \ref{sect_examples}.
For instance, the (LHI) can be used to prove a strong Feller-type property for $\{P(t)\}_{t\geq 0}$ and, assuming further the existence of an invariant measure $\mu$ for $\{P(t)\}_{t\geq 0}$, the occurrence of an entropy-cost inequality (see Corollary \ref{cor-cons} (iii)). In this last case, another classical consequence of the (LHI) is a hypercontractivity type estimate for the semigroup $\{P(t)\}_{t\geq 0}$ in $L^p(\X,\mu)$. Such estimate relies on the H\"older inequality and some integrability conditions with respect to $\mu$ of some exponential functions (see Corollary \ref{cor-hyp}).
Finally, a considerable set of examples of operators $A, C$ and functions $F$ to which our results can be applied, are collected at the end of Section \ref{sect_examples}.

\section{Notations and preliminaries}
In this section we fix the notations and we recall some basic results we will use throughout the paper.

Let $H_1$ and $H_2$ be two Hilbert spaces with inner products $\gen{\cdot,\cdot}_{H_1}$ and $\gen{\cdot,\cdot}_{H_2}$ and associated norms $\norm{\cdot}_{H_1}$ and $\norm{\cdot}_{H_2}$, respectively. We denote by $B_b(H_1;H_2)$ (resp. $C_b(H_1;H_2)$) the set of functions $f:H_1 \to H_2$ which are bounded and Borel measurable (resp. continuous). When $H_2=\R$ we simply write $B_b(H_1)$ (resp. $C_b(H_1)$). For any $k \in \N\cup\{\infty\}$, $C^k_b(H_1;H_2)$ consists of continuous functions $f:H_1\to H_2$ which are $k$-times Fr\'echet differentiable with bounded and continuous derivatives up to order $k$. If $H_2=\R$ we simply write $C_b^k(H_1)$ and for $k=1$ then, for any $x \in H_1$, we denote by $\J f(x)$ the unique $k\in H_1$ such that
\[\lim_{\norm{h}_{H_1}\ra 0}\frac{|f(x+h)-f(x)-\gen{h,k}_{H_1}|}{\norm{h}_{H_1}}=0.\]
For any $k\in\N\cup\set{\infty}$, we denote by $\mathcal{F}C_b^{k,n}(H_1)$, the space of cylindrical $C^k_b$ functions depending on $n$ variables, i.e., the set of functions $f:H_1\to \R$ such that $f(x)=\varphi(\langle x, h_1\rangle_{H_1},\ldots, \langle x, h_n\rangle_{H_1})$, $x \in H_1$, for some  $\varphi \in C_b^k(\R^n)$, $h_1,\ldots, h_n\in H_1$ and $n\in\N$.

Let $G: \Dom(G)\subseteq H_1\ra H_1$ be a linear operator and let $H_2\subseteq H_1$. We call \emph{part of $G$ in $H_2$} the operator $G_{H_2}:\Dom(G_{H_2})\subseteq H_2\ra H_2$ defined as
\begin{align*}
\Dom(G_{H_2})&:=\{x\in \Dom(G)\cap H_2\,|\, Gx\in H_2\};\\
G_{H_2}x&:=Gx,\qquad x\in \Dom(G_{H_2}).
\end{align*}
By $\mathcal{L}(H_1)$ we denote the set of all bounded linear operators from $H_1$ into itself and by $\Id_{H_1}\in\mathcal{L}(H_1)$ the identity operator on $H_1$. We say that $B\in\mathcal{L}(H_1)$ is \emph{non-negative} (resp. \emph{positive}) if for every $x\in H_1\setminus\set{0}$
\[
\gen{Bx,x}_{H_1}\geq 0\ (>0).
\]
On the other hand, $B \in \mathcal{L}(H_1)$ is a \emph{non-positive} (resp. \emph{negative}) operator, if $-B$ is non-negative (resp. positive). We remind the reader that such operators are self-adjoint (see \cite[Section IV.4]{RE-SI1}).
We recall that if a semigroup $\{P(t)\}_{t\geq 0}\subseteq\mathcal{L}(H_1)$ is strongly continuous, then there exists $M_0\geq 1$ and $\eta_0\in\R$ such that
\begin{equation*}
\norm{P(t)}_{\mathcal{L}(H_1)}\leq M_0e^{\eta_0t}, \qquad\;\, t \geq 0.
\end{equation*}
For the general theory concerning linear operators and semigroups we refer to \cite{DUN-SCH1,DUN-SCH2,EN-NA1}.

Now, let $(\Omega,\mathcal{F},\{\mathcal{F}_t\}_{t\geq 0},\mathbb{P})$ be a normal filtered probability space, $\K$ be a separable Banach space and by $\mathcal{B}(\K)$ we denote the family of the Borel subsets of $K$. For any random variable $\xi:(\Omega,\mathcal{F},\mathbb{P})\ra (\K,\mathcal{B}(\K))$, the \emph{law of $\xi$} on $(\K, \mathcal{B}(\K))$, denoted by $\mathscr{L}(\xi)$ and the \emph{expectation of $\xi$} with respect to $\mathbb{P}$ denoted by $\mathbb{E}[\xi]$ are defined by the formulas
\[
\mathscr{L}(\xi):=\mathbb{P}\circ\xi^{-1}
\]
and
\[
\mathbb{E}[\xi]:=\int_\Omega \xi(w)\  \mathbb{P}(d\omega)=\int_\K x\  \mathscr{L}(\xi)(dx).
\]
We need to recall the definition of some Banach spaces often used in the literature (see, for example, \cite[Section 6.2]{CER1}).
\begin{defi}\label{defi_space}
Let $I \subseteq [0,+\infty)$ be an interval and let $p\geq 1$. We denote by $C_p(I;\K)$ the space of $\K$-valued processes $\{Y(t)\}_{t\in I}$ such that the function $(Y(\cdot))(\omega):I\ra \K$ is bounded and continuous for $\mathbb{P}$-a.e. $\omega \in \Omega$. We endow the space $C_p(I;\K)$ with the norm
\[
\|\{Y(t)\}_{t\in I}\|_{C_p(I;\K)}^p:=\E\sq{\sup_{t\in I}\norm{Y(t)}_\K^p}.
\]
We denote by $\K^p(I)$ the space of progressively measurable\footnote{We say that a $\K$-valued process $\psi$ is \emph{progressively measurable} if, as a transformation from $\Omega \times [0,t]$ equipped with the $\sigma$-field $\mathcal{F}_t \times \mathcal{B}([0,t])$ into $(\K, \mathcal{B}(\K))$ is measurable for any $[0,t]\subset I$.} $\K$-valued process $\{Y(t)\}_{t\in I}$ such that the quantity
\[
\|\{Y(t)\}_{t\in I}\|_{\K^p(I)}^p:=\sup_{t\in I}\E\Big[\norm{Y(t)}_\K^p\Big],
\]
is finite. We endow the space $\K^p(I)$ with the norm $\norm{\cdot}_{\K^p(I)}$,
\end{defi}

%We refer to \cite[Section 4.1.2]{DA-ZA3} for a definition of cylindrical Wiener process.

A map $f:\Dom(f)\subseteq \mathcal{K}\ra\mathcal{K}$ is said to be \emph{dissipative} if for any $\alpha>0$ and $x,y\in \Dom(f)$, it holds
\begin{equation}\label{disban}
\norm{x-y-\alpha(f(x)-f(y))}_{\mathcal{K}}\geq \norm{x-y}_{\mathcal{K}}
\end{equation}
In particular if $\mathcal{K}$ is a Hilbert space \eqref{disban} is equivalent to
\begin{equation*}
\scal{f(x)-f(y)}{x-y}_{\mathcal{K}}\leq 0.
\end{equation*}
We say that $f$ is \emph{$m$-dissipative} if it is dissipative and the range of ${\rm Id}_{\K}-f$ is the whole space $\K$.

\begin{rmk}
If $f=A$ is linear, then \eqref{disban} becomes $\norm{(\lambda\Id-A)x}_\K\geq \lambda \norm{x}_\K$ for every $x\in\Dom(A)$ and $\lambda>0$. In addition, if $\K$ is a Hilbert space with inner product $\gen{\cdot,\cdot}_{\K}$, then \eqref{disban} is equivalent to say that $\langle A h,h\rangle_{\K} \le 0$ for any $h \in \K$.
\end{rmk}

Now we introduce a Hilbert space that plays a key role int the proof of the (LHI) (see Sections \ref{liphar} and \ref{nonome}). In what follows, $\X$ will be an Hilbert space with inner product $\gen{\cdot,\cdot}$ and associated norm $\norm{\cdot}$, and $C\in \mathcal{L}(\X)$ will be a positive operator.
We set
\begin{equation*}%\label{hc}
H_C:=C^{1/2}(\X), \qquad [h_1,h_2]_C:=\langle C^{-1/2}h_1,C^{-1/2}h_2\rangle,\qquad h_1,h_2\in C^{1/2}(\X).
\end{equation*}
It can be proved that $(H_C,[\cdot,\cdot]_C)$ is a Hilbert space continuously embedded in $\X$ (see \cite{BI-FE1}). We set $\norm{\cdot}_C$ the norm on $H_C$ associated to the inner product $[\cdot,\cdot]_C$.

We give the definition of $H_C$-Fr\'echet differentiability.

\begin{defi}
A function $\Phi: \X\rightarrow \R$ is $H_C$-Fr\'echet differentiable at $x\in\X$, if there exists $h_x\in H_C$ such that
\begin{align*}
\lim_{\norm{h}_C\ra 0}\frac{\left|\Phi(x+h)-\Phi(x)-[h_x,h]_C\right|}{\norm{h}_C}=0.
\end{align*}
Clearly, when $h_x$ exists, it is unique and we set $\J_C\Phi(x):=h_x$.
\end{defi}

\begin{rmk}\label{sun}
{\rm Observe that if $\Phi:\X\ra\R$ is a Fr\'echet differentiable function, then it is $H_C$-differentiable too. Furthermore, for every $x\in\X$ it holds
\[\J_C \Phi(x)=C \J\Phi(x),\]
This result can be found in \cite[Proposition 17]{BI-FE1}.}
\end{rmk}

The definition of $H_C$-Gateaux differentiability is the Gateaux counterpart of the previous definition. We stress that for our purposes we need a more technical definition in comparison to the ``standard'' one.

\begin{defi}
We say that a function $\Phi:\X\ra\X$ is $H_C$-Gateaux differentiable if for every $x\in\X$ and $h\in H_C$, there exists $\eps_{x,h}>0$ such that the function $\varphi_{x,h}:(-\eps_{x,h},\eps_{x,h})\ra\X$ defined as
\[\varphi_{x,h}(r):=\Phi(x+rh)-\Phi(x)\]
is $H_C$-valued and there exists $L_x\in \mathcal{L}(H_C)$ such that for every $h\in H_C$
\begin{align}
\lim_{r\ra 0}\norm{\frac{1}{r}\varphi_{x,h}(r)-L_xh}_C=0.\label{lim_dif}
\end{align}
\end{defi}

\noindent See \cite{BI-FE1} for further properties of the space $H_C$.

We end this section by recalling a result of existence and uniqueness of the mild solution of the problem
\begin{equation}\label{SDE}
\left\{\begin{array}{ll}
dX(t)=[AX(t)+F(X(t))]dt+ C^{1/2}dW(t),&  t> 0;\\
X(0)=x\in \X.
\end{array}\right.
\end{equation}
Here $\{W(t)\}_{t\geq 0}$ is a $\X$-valued cylindrical Wiener process (see \cite[Section 4.1.2]{DA-ZA3} for a definition) and the operator $A,F$ and $C$ satisfy the following assumptions.

\begin{hyp}\label{EU2}
Let $\X$ be a Hilbert space with inner product $\gen{\cdot,\cdot}$ and associated norm $\norm{\cdot}$.
 \begin{enumerate}[\rm(i)]
\item There exists $E$ a Borel subset of $\X$ which is a Banach space continuously and densely embedded in $\X$;
\item $A:\Dom(A)\subseteq\X\ra\X$ generates a strongly continuous semigroup $e^{tA}$ on $\X$ and $A_E$ (the part of $A$ in $E$) generates an analytic semigroup $e^{tA_E}$ on $E$. There exists $\zeta_A\in\R$ such that $A-\zeta_A\Id_{\X}$ is dissipative in $\X$ and $A_E-\zeta_A\Id_{E}$ is dissipative in $E$. For any $T>0$ and for $\mathbb{P}$-a.e. $\omega \in \Omega$ the function $W_{A}(\cdot)(\omega):[0,T]\to E$ defined by
\begin{equation}\label{stoconv}
W_{A}(t)(\omega):=\left(\int^t_0e^{(t-s)A} C^{1/2}dW(s)\right)(\omega),\qquad t\in[0,T],
\end{equation}
is continuous and
\begin{equation}\label{trace}
\int^T_0\Tr[e^{sA}Ce^{sA^*}]ds<+\infty.
\end{equation}

\item $E\subseteq \Dom(F)$ and $F(E)\subseteq E$. There exists $\zeta_F\in\R$ such that $F-\zeta_F\Id_{\X}$ is $m$-dissipative in $\X$ and $F_{|_E}-\zeta_F\Id_{E}$ is m-dissipative in $E$. $F_{|_E}:E\ra E$ is locally Lipschitz on $E$, namely $F_{|_E}$ is Lipschitz continuous on bounded subsets of $E$ and, there exist $M>0$ and $m\in\N$ such that
\begin{equation}\label{noname}
\|F_{|_E}(x)\|_E\leq M(1+\norm{x}^m_E),\qquad x\in E.
\end{equation}
\item
$C \in \mathcal{L}(\X)$ is a positive operator.
\end{enumerate}
\end{hyp}

\begin{rmk}
If $E=\X$, by \cite[Theorem 5.11]{DA-ZA3}, a sufficient condition that guarantees both the continuity of the trajectories of $W_A$ and \eqref{trace} is provided by the following: there exists $\eta \in (0,1)$ such that
\begin{equation}\label{condprato}
\int_0^T s^{-\eta}\Tr[e^{sA}Ce^{sA^*}]ds<+\infty.
\end{equation}
If $E \neq \X$, other conditions can be found in \cite[Sections 6.1-8.2]{CER1} and \cite[Sections 5.4-5.5]{DA-ZA3}.
\end{rmk}

For a $x$ belonging to $\X$, a mild solution of \eqref{SDE} is a $\X$-valued adapted stochastic process $\set{X(t,x)}_{t\geq 0}$ satisfying
\begin{equation*}
X(t,x)=e^{tA}x+\int_0^te^{(t-s)A}F(X(s,x))ds+\int_0^te^{(t-s)A}C^{1/2}dW(s),\qquad t\geq 0.
\end{equation*}
The following is a result of existence and uniqueness of a mild solution of \eqref{SDE} whose proof can be found in \cite[Section 3]{BI1}.

\begin{thm}\label{Genmild}
Assume Hypotheses \ref{EU2} hold true.
\begin{enumerate}[\rm (i)]
\item For any $x\in E$, \eqref{SDE} has a unique mild solution $\{X(t,x)\}_{t\geq 0}$ belonging to $C_p([0,T],\X)\cap C_p((0,T],E)$, for any $T>0$ and $p\geq 1$.

\item For any $x\in\X$, \eqref{SDE} has a unique \emph{generalized mild solution} $\{X(t,x)\}_{t\geq 0}$ belonging to $C_p([0,T],\X)$, for any $T>0$ and $p\geq 1$, namely, for any sequence $\{x_n\}_{n\in\N}\subseteq E$ converging to $x$ in $\X$ and for any $T>0$ it holds
\begin{equation*}
\lim_{n\ra+\infty}\sup_{t\in [0,T]}\norm{X(t,x)-X(t,x_n)}=0,
\end{equation*}
where $\{X(t,x_n)\}_{t\geq 0}$ is the unique mild solution of \eqref{SDE} with initial datum $x_n$ (see (i)).
\end{enumerate}
Moreover for any $p\geq 1$, there exist two positive constants $C_p,\kappa_p$ such that
\begin{equation*}
\norm{X(t,x)}_{\mathcal{O}}^p \leq  C_p\left(e^{-\kappa_p t}\norm{x}^p_{\mathcal{O}}+\norm{W_A(t)}_{\mathcal{O}}^p+\int_0^te^{-\kappa_p (t-s)}\norm{F(W_A(s))}^p_{\mathcal{O}}ds\right),
\end{equation*}
for any $t>0$ and $x\in {\mathcal{O}}$, being either ${\mathcal{O}}=E$ or ${\mathcal{O}}=\X$.
Finally, there exists $\eta>0$ such that
\begin{align}
\norm{X(t,x_1)-X(t,x_2)}&\leq e^{-\eta t}\norm{x_1-x_2}, \qquad t>0,\ x_1,x_2 \in \X.\label{lipdete}
\end{align}
\end{thm}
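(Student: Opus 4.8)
The plan is to eliminate the stochastic integral by a pathwise change of unknown, reducing \eqref{SDE} to a deterministic semilinear evolution equation with a random but (for a.e.\ $\omega$) fixed continuous forcing, and then to solve the latter by a contraction argument local in time, made global through the dissipativity assumptions. \textbf{Reduction.} I would first set $Y(t):=X(t,x)-W_A(t)$, where $W_A$ is the stochastic convolution \eqref{stoconv}. Then the mild formulation of \eqref{SDE} is equivalent to the pathwise integral equation
\[
Y(t)=e^{tA}x+\int_0^te^{(t-s)A}F\big(Y(s)+W_A(s)\big)\,ds,\qquad t\geq 0,
\]
which contains no It\^o integral. By Hypotheses \ref{EU2}(ii), for $\mathbb{P}$-a.e.\ $\omega$ the path $W_A(\cdot)(\omega)$ lies in $C([0,T];E)$, so one may freeze $\omega$ and treat $s\mapsto F_{|_E}(\,\cdot+W_A(s))$ as a time-dependent locally Lipschitz nonlinearity on $E$.

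\textbf{Local well-posedness in $E$.} Since $A_E$ generates an analytic semigroup on $E$ and $F_{|_E}$ is Lipschitz on bounded sets of $E$ (Hypotheses \ref{EU2}(iii)), the map
\[
\Gamma(Y)(t):=e^{tA_E}x+\int_0^te^{(t-s)A_E}F_{|_E}\big(Y(s)+W_A(s)\big)\,ds
\]
is a contraction on a suitable ball of $C([0,\tau];E)$ for $\tau$ small, giving a unique local solution $Y\in C([0,\tau];E)$; adding $W_A$ back produces the local mild solution $X$, continuous in $E$ on $(0,\tau]$ and in $\X$ on $[0,\tau]$.

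\textbf{Globalization (the main obstacle).} Extending to arbitrary $T>0$ requires an a priori bound preventing blow-up, and this is the delicate point, because mild solutions are not differentiable and the energy identity for $\|Y(t)\|^2$ is not directly available. I would overcome this by replacing $A$ with its Yosida approximations $A_n:=nA(n\Id_{\X}-A)^{-1}$ (bounded, defined for $n$ large since $A-\zeta_A\Id_{\X}$ is dissipative), for which the solutions are strong, deriving the estimate from the dissipativity of $A-\zeta_A\Id_{\X}$ and the $m$-dissipativity of $F-\zeta_F\Id_{\X}$ together with the polynomial growth \eqref{noname}, and then letting $n\to+\infty$. Combined with the integrability of $W_A$ and of $F(W_A)$ from Hypotheses \ref{EU2}(ii), this both yields the stated moment bounds in $E$ and in $\X$ and rules out finite-time explosion, so the local solution extends globally; this proves (i).

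\textbf{Lipschitz dependence and item (ii).} Since the noise enters additively and identically for two data $x_1,x_2\in\X$, the difference $Z(t):=X(t,x_1)-X(t,x_2)$ solves a purely deterministic mild equation; the same Yosida regularization gives $\tfrac12\tfrac{d}{dt}\|Z(t)\|^2\leq(\zeta_A+\zeta_F)\|Z(t)\|^2$, hence $\|Z(t)\|\leq e^{(\zeta_A+\zeta_F)t}\|x_1-x_2\|$, which is \eqref{lipdete} with $\eta=-(\zeta_A+\zeta_F)$. Finally, for $x\in\X$ I would pick $E\ni x_n\to x$ in $\X$ (possible by the density in Hypotheses \ref{EU2}(i)) and use \eqref{lipdete} to show that $\{X(\cdot,x_n)\}_n$ is Cauchy in $C_p([0,T];\X)$; its limit defines the generalized mild solution, independent of the approximating sequence by the very same estimate.
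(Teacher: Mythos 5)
The paper does not prove this theorem itself: it is quoted from \cite[Section 3]{BI1} (see also \cite[Chapter 6]{CER1}), where the argument is exactly the one you outline --- subtract the stochastic convolution $W_A$, solve the resulting pathwise deterministic equation by a local contraction in $E$, globalize via the dissipativity of $A-\zeta_A\Id_{\X}$ and $F-\zeta_F\Id_{\X}$ through regular (Yosida-type) approximations that justify the energy identity, and finally pass from $E$ to $\X$ by density using the Lipschitz estimate. One small point: your computation gives $\|Z(t)\|\leq e^{(\zeta_A+\zeta_F)t}\|x_1-x_2\|$, so the claimed positivity of $\eta$ (and of $\kappa_p$) only follows when $\zeta_A+\zeta_F<0$; this sign condition is implicit in the statement and should be flagged rather than silently absorbed into setting $\eta=-(\zeta_A+\zeta_F)$.
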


\noindent We point out that all the (in)equalities in the above theorem and in what follows, if not otherwise specified, have to be understood to hold for $\mathbb{P}$-a.e. $\omega\in\Omega$.

\section{Logarithmic Harnack inequality: the Lipschitz continuous case}\label{liphar}

(LHI) are a classical tools to prove regularity results for transition semigroups associated to stochastic differential equations. They have already been proved in several settings and more is known in finite dimension. In this case, (LHI) are satisfied by solutions of the Cauchy problems associated to the equation
\[\frac{d}{dt}u(t,\xi) = \mathcal{A}(t)u(t,\xi),\qquad t>0,\ \xi\in\R^n;\]
where $\mathcal{A}(t)\psi:= {\rm Tr}(Q(t)D^2\psi)+\langle b(t,x), D\psi\rangle$ for any smooth function $\psi$ assuming suitable conditions on $Q$ and $b$ (see, for example, \cite{DENG18,ES09,Gordina11,WANGYUAN11}). In infinite dimension such kind of estimates have been already proved in \cite{DaPROWA09} and in \cite{RO-WA1} for the transition semigroups associated to \eqref{SDE}. In \cite{DaPROWA09}, (LHI) are proved assuming that $C^{-1}\in \mathcal{L}(\X)$ and the dissipativity of $F$ whereas in \cite{RO-WA1} they are proved assuming that $C$ depends on the spatial variable and $F$ is Lipschitz continuous and dissipative along both $\X$ and $H_C$. Here we generalize the results in \cite{RO-WA1} when $C \in \mathcal{L}(X)$ assuming either $F$ Lipschitz continuous or dissipative along $H_C$ and in this last case without requiring that  $C^{-1}\in \mathcal{L}(\X)$.

We start by assuming the following set of hypotheses.
\begin{hyp}\label{HA}
Hypotheses \ref{EU2} are satisfied with $E=\X$. Furthermore we assume that $F:\X\ra\X$ is a Fr\'echet differentiable and Lipschitz continuous function with Lipschitz constant $L_F$, and  there exists $\zeta_\X>0$ such that
\begin{align}\label{diss_X}
\langle (A+\J F(x))h,h\rangle\leq -\zeta_\X\norm{h}^2, \qquad\;\, x,h\in\X.
\end{align}
\end{hyp}
\begin{rmk}{\rm
We point out that the Lipschitz continuity of $F$ together with \eqref{diss_X} yields the existence of $\ol{\zeta} \in \R$ such that $A-\ol{\zeta} Id_{\X}$ and $F-\ol{\zeta} Id_{\X}$ are both dissipative. Moreover, we remark that the following weaker form of \eqref{diss_X} can be assumed in Hypotheses \ref{HA} instead of \eqref{diss_X}
\begin{align*}
\langle A(h_1-h_2)+F(x+h_1)-F(x+h_2),h_1-h_2\rangle\leq -\zeta_\X\norm{h_1-h_2}^2,\qquad h_1,h_2\in\X.
\end{align*}
For the sake of clarity in the proofs of this section we decided to assume \eqref{diss_X}.}
\end{rmk}

Under Hypotheses \ref{HA} the stochastic problem \eqref{SDE} admits a unique mild solution $\{X(t,x)\}_{t\geq 0}$
satisfying \eqref{lipdete} and allows us to define the family of operators $\{P(t)\}_{t\geq 0}$ as
\begin{align}\label{trans}
P(t)\varphi(x):=\E[\varphi(X(t,x))],\qquad x\in\X,\ t\geq 0,\ \varphi\in B_b(\X).
\end{align}
%which turns out to be a semigroup which solves the deterministic Cauchy problem
%\begin{equation*}
%\left\{
%\begin{array}{ll}
%D_t u= {\rm Tr}(CD^2u)+\langle Ax+F(x), Du\rangle,\qquad\;\, x \in \X, t>0,\\
%u(0)=\varphi\in B_b(\X)
%\end{array}
%\right.
%\end{equation*}ù
Further regularity properties of the mild solution $\{X(t,x)\}_{t\geq 0}$ of \eqref{SDE} will be needed in the sequel.

\begin{prop}\label{prodiff}
Assume Hypotheses \ref{HA} hold true and let $T>0$. The map $x\mapsto \{X(t,x)\}_{t\in[0,T]}$ from $\X$ into $\X^2([0,T])$ (see Definition \ref{defi_space}) is Gateaux differentiable and for any $x,y\in\X$, its Gateaux derivative is the unique mild solution of
\begin{gather}\label{dermildX}
\eqsys{
dY_x(t)=[A+ \J F(X(t,x))]Y_x(t)dt, & t\in(0,T];\\
Y_x(0)=y,
}
\end{gather}
namely, for every $x,y\in \X$ the process $\{\J_G X(t,x)y\}_{t\in[0,T]}$ satisfies
\begin{equation}\label{DmildY}
\J_G X(t,x)y=e^{tA}y+\int^t_0e^{(t-s)A}\J F(X(s,x))\J_G X(s,x)yds,\qquad t\in[0,T].
\end{equation}
In addition it holds that
\begin{align}\label{stigraX}
\|\J_G X(t,x)y\|\leq e^{-\zeta_\X t}\norm{y},\qquad t\in[0,T],\ x,y \in \X.
\end{align}
\end{prop}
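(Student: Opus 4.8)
The plan is to establish Gâteaux differentiability of the solution map by showing that the difference quotients converge to the solution $Y_x$ of the linearized equation \eqref{dermildX}, and then to read off the gradient estimate \eqref{stigraX} directly from the dissipativity hypothesis \eqref{diss_X}. The whole argument is a fixed-point / Gronwall type analysis in the Banach space $\X^2([0,T])$.

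First I would set up the candidate derivative process. For fixed $x,y\in\X$, consider the linear (random, time-dependent) equation \eqref{dermildX}; since $F$ is Fr\'echet differentiable with bounded derivative (Lipschitz continuity gives $\norm{\J F(z)}_{\mathcal L(\X)}\le L_F$ for all $z$), the map $s\mapsto \J F(X(s,x))$ is uniformly bounded and the mild formulation \eqref{DmildY} defines a contraction on $\X^2([0,T_0])$ for $T_0$ small, hence has a unique solution on $[0,T]$ by iterating; I would record that this solution, call it $\{Z_x(t)y\}_{t\in[0,T]}$, is linear in $y$ and bounded. The next step is to prove that $Z_x(t)y$ is genuinely the Gâteaux derivative, i.e.
\begin{align*}
\lim_{r\to 0}\,\Big\|\tfrac{1}{r}\big(X(\cdot,x+ry)-X(\cdot,x)\big)-Z_x(\cdot)y\Big\|_{\X^2([0,T])}=0.
\end{align*}
For this I would write the mild equation for the increment $\Delta_r:=X(\cdot,x+ry)-X(\cdot,x)$, subtract $r\,Z_x$, and use the first-order Taylor expansion
\[
F(X(s,x+ry))-F(X(s,x))=\int_0^1 \J F\big(X(s,x)+\theta\Delta_r(s)\big)\,d\theta\;\Delta_r(s),
\]
so that the remainder $R_r:=\Delta_r-r\,Z_x$ satisfies a mild equation whose inhomogeneity splits into a term linear in $R_r$ (handled by Gronwall) and a term measuring $\big[\int_0^1\J F(X(s,x)+\theta\Delta_r(s))\,d\theta-\J F(X(s,x))\big]\,r Z_x(s)$. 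The Lipschitz bound \eqref{lipdete} forces $\norm{\Delta_r}\le e^{-\eta s}r\norm{y}\to 0$, and continuity of $\J F$ together with dominated convergence makes this second term $o(r)$ in $\X^2([0,T])$.

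Finally I would derive \eqref{stigraX}. Fix $x,y$ and work $\mathbb P$-a.s. Because $e^{tA}y$ solves the mild equation with $\J F\equiv 0$, the function $t\mapsto Z_x(t)y$ is a mild solution of the abstract ODE $\dot u=(A+\J F(X(t,x)))u$; along the regularized trajectory (or directly, approximating by Yosida resolvents of $A$ so that $u$ is differentiable) I would compute
\[
\frac{1}{2}\frac{d}{dt}\norm{Z_x(t)y}^2=\gen{(A+\J F(X(t,x)))Z_x(t)y,\;Z_x(t)y}\le -\zeta_\X\norm{Z_x(t)y}^2,
\]
where the inequality is exactly \eqref{diss_X} applied with $h=Z_x(t)y$. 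Integrating this differential inequality gives $\norm{Z_x(t)y}\le e^{-\zeta_\X t}\norm{y}$, which is \eqref{stigraX}; since $Z_x(t)y=\J_G X(t,x)y$ by the previous step, the proof is complete.

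The main obstacle is the convergence of the difference quotients in the \textbf{strong} $\X^2([0,T])$ sense rather than merely pointwise in $t$: one must control the remainder $R_r$ uniformly in time while the coefficient $\J F(X(s,x)+\theta\Delta_r)$ varies with $\omega$ and $r$, so the delicate point is justifying the passage to the limit under the expectation and the $\sup_t$ simultaneously, which I expect to require the a priori bound $\norm{\Delta_r}\le e^{-\eta s}r\norm{y}$ from \eqref{lipdete} to dominate the integrands and a Gronwall estimate that is uniform in $r$.
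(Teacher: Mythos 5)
Your proposal is correct and follows essentially the same route as the paper: the paper handles the Gâteaux differentiability by citing the classical result (Da Prato--Zabczyk, Theorem 9.8, or Masiero), whose standard proof is exactly the difference-quotient/Gronwall argument you sketch, and it proves \eqref{stigraX} by the same device of scalarly multiplying \eqref{dermildX} by $Y_x(t,y)$ (after reducing to strict solutions by approximation) and applying \eqref{diss_X}. No gaps worth flagging.
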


\begin{proof}
The first part of the statement is classical and can be found in \cite[Theorem 9.8]{DA-ZA3} or \cite{MAS08}. To prove estimate \eqref{stigraX} we assume that $\{Y_x(t,y)\}_{t\geq 0}$ is a strict solution of \eqref{dermildX}, otherwise we can proceed as in \cite[Proposition 3.6]{BI1} or \cite[Proposition 6.2.2]{CER1} approximating $\{Y_x(t,y)\}_{t\geq 0}$ by means of a sequence of more regular processes. In this case, scalarly multiplying the stochastic differential equation in \eqref{dermildX} by $Y_x(t,y)$ and using \eqref{diss_X} we obtain
\begin{align*}
\frac{1}{2}\frac{d}{dt}\norm{Y_x(t,y)}^2=\langle [A+\J  F(X(t,x))]Y_x(t,y),Y_x(t,y)\rangle\leq -\zeta_\X\norm{Y_x(t,y)}^2,
\end{align*}
whence we infer \eqref{stigraX}.
\end{proof}

%As in the Gaussian case (see \cite{}), in the perturbed case considered here, one expects that the (LHI), which in general compares the values of $P_t f$ at two different points, could fail if the two points do not belong to a line whose direction is given the vectors in the generalized Cameron Martin space (see Definition \ref{hc}). In view of this fact, more information on the continuity of the linear operator $D_GX(t,x) \in \mathcal{L}(H_C)$ are needed in the sequel and to this aim we add the following hypotheses.

Our proof of (LHI) relies on a gradient estimate for the transition semigroup $\{P(t)\}_{t\geq 0}$ (see \eqref{V1}). To obtain such result we need some additional hypotheses on $A,C$ and $F$.

\begin{hyp}\label{Sinem}
Assume that the part of $A$ in $H_C$, denoted by $A_C$, generates a contractive strongly continuous semigroup $e^{tA_C}$ in $H_C$ and that one of the following two conditions is satisfied:
\begin{enumerate}[\rm (i)]
\item\label{Luz} there exists $\gamma\in(0,1)$ such that
\begin{align}\label{case1}
e^{tA}(\X)\subseteq C^{1/2}(\X),\qquad \|C^{-1/2}e^{tA}\|_{\mathcal{L}(\X)}\leq K t^{-\gamma}
\end{align}
for any $t>0$ and some positive constant $K$ independent of $t$;
\item $F=C^{1/2}G$ for some Fr\'echet differentiable and Lipschitz continuous function $G:\X\ra\X$ with Lipschitz constant $L_G$.
\end{enumerate}
\end{hyp}

\noindent We refer to Section \ref{sect_examples} for some examples of operators $A$ and $C$ verifying Hypotheses \ref{Sinem}.

The following proposition gives us information about the continuity of $h\mapsto \J_GX(t,x)h$ as a linear map in $H_C$.

\begin{prop}\label{stimild1}
Assume Hypotheses \ref{HA} and \ref{Sinem} hold true. For any $t>0$ and $x \in X$, the operator $\J_GX(t,x)$ belongs to $\mathcal{L}(H_C)$. More precisely
\begin{align}\label{sti}
\|\J_GX(t,x)h\|_{C}\leq K_{1}\norm{h}_{C}, \qquad t>0,\ x \in \X,\, h \in H_C,
\end{align}
for some positive constant $K_{1}$ independent of $t,x$ and $h$.
\end{prop}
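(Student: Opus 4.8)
The plan is to work directly from the mild formula \eqref{DmildY} for the Gateaux derivative. Fixing $x \in \X$ and writing $V(t) := \J_G X(t,x)$, for $h \in H_C$ we have
\[
V(t)h = e^{tA}h + \int_0^t e^{(t-s)A}\J F(X(s,x))V(s)h\,ds .
\]
I would estimate the $H_C$-norm of each term separately, using three ingredients: the contractivity of $e^{tA_C}$ on $H_C$ assumed in Hypotheses \ref{Sinem} (which gives $\norm{e^{tA}h}_C \le \norm{h}_C$ for the first term), the $\X$-gradient estimate \eqref{stigraX}, namely $\norm{V(s)h} \le e^{-\zeta_\X s}\norm{h}$, and the continuous embedding $H_C \hookrightarrow \X$, which yields $\norm{h} \le \norm{C^{1/2}}_{\mathcal{L}(\X)}\norm{h}_C$. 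The point is that \eqref{stigraX} already supplies exponential decay in $s$, so that once the integral term is controlled in $H_C$ by an expression of the form $\mathrm{const}\cdot\big(\int_0^t(\cdots)e^{-\zeta_\X s}\,ds\big)\norm{h}_C$, the integral stays bounded uniformly in $t$.

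Under the first alternative of Hypotheses \ref{Sinem} (condition \eqref{case1}), I would observe that, since $\J F(X(s,x))V(s)h \in \X$ and $F$ is Lipschitz, $\norm{\J F(X(s,x))V(s)h}\le L_F\norm{V(s)h}$; applying $e^{(t-s)A}$ and using \eqref{case1} shows that $e^{(t-s)A}\J F(X(s,x))V(s)h \in H_C$ with
\[
\norm{e^{(t-s)A}\J F(X(s,x))V(s)h}_C = \norm{C^{-1/2}e^{(t-s)A}\J F(X(s,x))V(s)h} \le K(t-s)^{-\gamma}L_F\,\norm{V(s)h}.
\]
Combining this with \eqref{stigraX} and the embedding, the integral term is bounded by a constant times $\big(\int_0^t(t-s)^{-\gamma}e^{-\zeta_\X s}\,ds\big)\norm{h}_C$. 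The one genuinely technical step is to check that this integral is bounded uniformly in $t>0$: I would split $[0,t]$ into $[0,t-1]$ and $[t-1,t]$ (treating $t\le 1$ directly), using $(t-s)^{-\gamma}\le 1$ on the first piece (so the factor $e^{-\zeta_\X s}$ makes it integrable over $[0,\infty)$) and the integrability of the singularity $\int_0^1 u^{-\gamma}\,du = (1-\gamma)^{-1}$ on the second. This yields the claimed bound $\norm{V(t)h}_C\le K_1\norm{h}_C$ with $K_1$ independent of $t,x,h$.

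Under the second alternative ($F=C^{1/2}G$) the argument is simpler and does not need \eqref{case1}. Here $\J F=C^{1/2}\J G$, and since $\norm{C^{1/2}z}_C=\norm{z}$ for every $z\in\X$, the factor $C^{1/2}$ maps the $\X$-vector $\J G(X(s,x))V(s)h$ isometrically into $H_C$, with $H_C$-norm $\le L_G\norm{V(s)h}$. The semigroup $e^{(t-s)A}$ then acts on this $H_C$-element and, by contractivity of $e^{tA_C}$, does not increase its $H_C$-norm. Hence the integrand is controlled in $H_C$ by $L_G\norm{V(s)h}\le L_G\norm{C^{1/2}}_{\mathcal{L}(\X)}e^{-\zeta_\X s}\norm{h}_C$, and $\int_0^t e^{-\zeta_\X s}\,ds\le \zeta_\X^{-1}$ again gives a bound uniform in $t$.

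In both cases I should also record that the $H_C$-valued Bochner integral is well defined: the integrand lies in $H_C$ with integrable $H_C$-norm (the singularity $(t-s)^{-\gamma}$ is integrable since $\gamma<1$), so it coincides with the $\X$-valued integral in \eqref{DmildY}, which justifies that $V(t)h\in H_C$ and not merely that an a priori $\X$-valued object has a finite formal $H_C$-norm. The main obstacle, as noted, is the uniformity in $t$: were one to use only $\norm{V(s)h}\le\norm{h}$ in the first case, the resulting $\int_0^t(t-s)^{-\gamma}\,ds$ would grow like $t^{1-\gamma}$; it is precisely the exponential decay furnished by the dissipativity assumption \eqref{diss_X} through \eqref{stigraX} that compensates and produces a constant independent of $t$.
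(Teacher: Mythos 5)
Your proposal is correct and follows essentially the same route as the paper: in case (i) the paper also bounds $\|e^{tA}h\|_C$ by contractivity of $e^{tA_C}$ and controls the integral term via \eqref{case1}, the Lipschitz bound on $\J F$, \eqref{stigraX} and the embedding $\norm{h}\le\norm{C^{1/2}}_{\mathcal{L}(\X)}\norm{h}_C$, splitting $\int_0^t (t-s)^{-\gamma}e^{-\zeta_\X s}\,ds$ exactly as you do (just written in the variable $u=t-s$), and in case (ii) it likewise uses that $C^{1/2}$ maps $\X$ isometrically into $H_C$ together with $\int_0^t e^{-\zeta_\X s}\,ds\le\zeta_\X^{-1}$. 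Your added remark on the well-definedness of the $H_C$-valued Bochner integral is a point the paper leaves implicit, but it does not change the argument.
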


\begin{proof}
We start by assuming that Hypotheses \ref{Sinem}(i) hold true. For any $t>0$, $x\in \X$ and $h\in H_{C}$, by \eqref{DmildY} and the definition of $\norm{\cdot}_C$,
\begin{align*}
\norm{\J_GX(t,x)h}_{C}\leq \|e^{tA_C}h\|_C+\int^t_0\|C^{-1/2}e^{(t-s)A}\J F(X(s,x))\J_G X(s,x)h\|ds
\end{align*}
By the Lipschitz continuity of $F$, the contractivity of $\{e^{tA_C}\}_{t\geq 0}$ in $H_C$, Hypotheses \ref{Sinem}\eqref{Luz} and \eqref{stigraX} we have
\begin{align*}
\|\J_GX(t,x)h\|_{C}&\leq \norm{h}_{C}+K L_F\|C^{1/2}\|_{\mathcal{L}(\X)}\norm{h}_{C}\int^t_0\frac{e^{-\zeta_\X s}}{(t-s)^{\gamma}}ds\\
%&= \norm{h}_{C}+K L_F\|C^{1/2}\|_{\mathcal{L}(\X)}\norm{h}_{C}\int^t_0\frac{e^{-\zeta_\X (t-s)}}{s^{\gamma}}ds\\
&\leq \norm{h}_{C}+K L_F\|C^{1/2}\|_{\mathcal{L}(\X)}\norm{h}_{C}\pa{\int^1_0\frac{1}{s^{\gamma}}ds+\int^t_1e^{-\zeta_\X (t-s)}ds}\\
&\leq \sq{1+K L_F\|C^{1/2}\|_{\mathcal{L}(\X)}\pa{\frac{1}{1-\gamma}+\frac{1}{\zeta_X}}}\norm{h}_{C}
\end{align*}
whence the claim, with $K_1:=1+K L_F\|C^{1/2}\|_{\mathcal{L}(\X)}((1-\gamma)^{-1}+\zeta_{\X}^{-1})$.

Now we assume that Hypotheses \ref{Sinem}(ii) are satisfied. In this case, using again \eqref{DmildY} we have
\begin{align*}
\norm{\J_GX(t,x)h}_{C}\leq \|e^{tA_C}h\|_C+\int^t_0\|e^{(t-s)A_C}C^{1/2}\J G(X(s,x))\J_G X(s,x)h\|_Cds
\end{align*}
for any $t>0, x\in \X$ and $h \in H_C$. Thus, the Lipschitz continuity of $G$, the contractivity of $\{e^{tA_C}\}_{t\geq 0}$ in $H_C$ and \eqref{stigraX} yield that
\begin{align*}
\|\J_GX(t,x)h\|_{C}&\leq \|e^{tA_C}h\|_C+\int^t_0\|e^{(t-s)A_C}C^{1/2}\J G(X(s,x))\J_G X(s,x)h\|_Cds\\
&\leq \|h\|_C+\int^t_0\|C^{1/2}\J G(X(s,x))\J_G X(s,x)h\|_Cds\\
&=\|h\|_C+\int^t_0\|\J G(X(s,x))\J_G X(s,x)h\|ds\\
&\leq \|h\|_C+L_G\int^t_0\|\J_G X(s,x)h\|ds\\
&\leq\|h\|_C+L_G\|C^{1/2}\|_{\mathcal{L}(\X)}\|h\|_C\int^t_0e^{-\zeta_\X s}ds\\
&=\|h\|_C+L_G\|C^{1/2}\|_{\mathcal{L}(\X)}\|h\|_C\pa{\frac{1}{\zeta_\X}-\frac{e^{-\zeta_\X t}}{\zeta_\X}}\\
&\leq \pa{1+\frac{L_G\|C^{1/2}\|_{\mathcal{L}(\X)}}{\zeta_\X}}\|h\|_C,
\end{align*}
whence the claim follows, with $K_1:=1+L_G\zeta_\X^{-1}\|C^{1/2}\|_{\mathcal{L}(\X)}$.
\end{proof}

\begin{rmk}
We could remove the requirement $\zeta_\X>0$ in Hypotheses \ref{HA} and replace the condition \eqref{case1} with the following: there exist $c>0$ and $\gamma\in(0,1)$ such that
\begin{align*}
e^{tA}(\X)\subseteq C^{1/2}(\X),\qquad \|C^{-1/2}e^{tA}\|_{\mathcal{L}(\X)}\leq K e^{ct} t^{-\gamma}.
\end{align*}
In this case \eqref{sti} would be changed as follows: for any $T>0$ there exist $K_1(T)$, such that for any $t>0$, $x \in\X$ and $h\in H_C$ we have
\begin{align*}
\|\J_GX(t,x)h\|_{C}\leq K_{1}(T)\norm{h}_{C}.
\end{align*}
\end{rmk}

Now, we prove that for any fixed $t>0$, the map $\X \ni x \mapsto X(t,x)\in \X$ is $H_C$-Gateaux differentiable.

\begin{prop}
Assume that Hypotheses \ref{HA} and \ref{Sinem} hold true. For any $t>0$, the map $x\mapsto X(t,x)$, from $\X$ to $\X$, is $H_{C}$-Gateaux differentiable  $\mathbb{P}$-a.e. and its $H_C$-Gateaux derivative along $h\in H_{C}$ is given by $\J_G X(t,x)h$.
\end{prop}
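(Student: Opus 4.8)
The plan is to work $\mathbb{P}$-almost surely and to exploit that in the increment $X(t,x+rh)-X(t,x)$ the stochastic convolution cancels, turning the difference quotient into a pathwise deterministic object. Fix $t>0$, $x\in\X$, $h\in H_C$ and $\omega$ in the full-measure set on which $W_A(\cdot)(\omega)$ is continuous, and for $r\neq 0$ set $Z_r(s):=r^{-1}(X(s,x+rh)-X(s,x))$. Subtracting the mild formulations of \eqref{SDE} relative to the initial data $x+rh$ and $x$ makes the stochastic integral disappear, and dividing by $r$ gives
\[
Z_r(t)=e^{tA}h+\int_0^t e^{(t-s)A}R_r(s)Z_r(s)\,ds,
\]
where, by the fundamental theorem of calculus applied to the Fr\'echet-differentiable $F$,
\[
R_r(s):=\int_0^1 \J F\pa{X(s,x)+\sigma\pa{X(s,x+rh)-X(s,x)}}\,d\sigma,\qquad \norm{R_r(s)}_{\mathcal{L}(\X)}\le L_F.
\]
The candidate $H_C$-derivative $\Theta_x(s):=\J_G X(s,x)$ solves the same equation with $R_r(s)$ replaced by $\J F(X(s,x))$, see \eqref{DmildY}, and belongs to $\mathcal{L}(H_C)$ by Proposition \ref{stimild1}.

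First I would check that $\varphi_{x,h}(r)=X(t,x+rh)-X(t,x)$ is $H_C$-valued. In case (i) of Hypotheses \ref{Sinem} this follows from $e^{tA}(\X)\subseteq H_C$ together with $\norm{C^{-1/2}e^{(t-s)A}}_{\mathcal{L}(\X)}\le K(t-s)^{-\gamma}$ and the boundedness of $s\mapsto\norm{F(X(s,x+rh))-F(X(s,x))}$, so that the Bochner integral converges in $H_C$ because $\gamma<1$. In case (ii) one writes $F=C^{1/2}G$, uses that the semigroup $e^{tA}$ restricts to $e^{tA_C}$ on $H_C$ and that $\norm{C^{1/2}v}_C=\norm{v}$.

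The core is the convergence $\norm{Z_r(t)-\Theta_x(t)h}_C\to 0$ as $r\to 0$, which is exactly the limit \eqref{lim_dif} with $L_x=\Theta_x(t)$. Setting $D_r:=Z_r-\Theta_x h$ and splitting
\[
R_r(s)Z_r(s)-\J F(X(s,x))\Theta_x(s)h=R_r(s)D_r(s)+\sq{R_r(s)-\J F(X(s,x))}\Theta_x(s)h,
\]
one obtains
\[
D_r(t)=\int_0^t e^{(t-s)A}R_r(s)D_r(s)\,ds+g_r(t),\qquad g_r(t):=\int_0^t e^{(t-s)A}\sq{R_r(s)-\J F(X(s,x))}\Theta_x(s)h\,ds.
\]
I would first show $\norm{g_r(t)}_C\to 0$. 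By \eqref{lipdete} one has $\norm{X(s,x+rh)-X(s,x)}\to 0$ pathwise, hence the argument of $\J F$ in $R_r(s)$ tends to $X(s,x)$ uniformly in $\sigma$; by continuity of $\J F$ this yields $\sq{R_r(s)-\J F(X(s,x))}\Theta_x(s)h\to 0$ in $\X$ for every $s$. This integrand is dominated using $\norm{\J F}_{\mathcal{L}(\X)}\le L_F$ and the decay $\norm{\Theta_x(s)h}\le e^{-\zeta_\X s}\norm{C^{1/2}}_{\mathcal{L}(\X)}\norm{h}_C$ from \eqref{stigraX}, against the kernel $K(t-s)^{-\gamma}$ in case (i) or a bounded kernel in case (ii); dominated convergence then gives $\norm{g_r(t)}_C\to 0$. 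Finally, estimating the first term exactly as in the proof of Proposition \ref{stimild1} and using $\norm{D_r(s)}\le\norm{C^{1/2}}_{\mathcal{L}(\X)}\norm{D_r(s)}_C$ leads, in case (i), to
\[
\norm{D_r(t)}_C\le\norm{g_r(t)}_C+KL_F\norm{C^{1/2}}_{\mathcal{L}(\X)}\int_0^t (t-s)^{-\gamma}\norm{D_r(s)}_C\,ds,
\]
with a bounded kernel in case (ii); since $\norm{D_r}_C$ is bounded on $[0,t]$ by Proposition \ref{stimild1}, a generalized weakly singular Gronwall lemma (resp. the classical one) forces $\norm{D_r(t)}_C\to 0$.

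The main obstacle I anticipate is the weakly singular kernel $(t-s)^{-\gamma}$ in case (i): it must be controlled both inside the dominated-convergence argument for $g_r$ and in the concluding Gronwall step, which has to be the singular-kernel version rather than the classical one. A secondary point requiring care is that all limits be read $\mathbb{P}$-almost surely; this is legitimate precisely because the cancellation of the stochastic convolution renders the increment a pathwise deterministic quantity, so that the only stochastic input is the pathwise contraction \eqref{lipdete}, possibly after passing to a sequence $r_n\to 0$.
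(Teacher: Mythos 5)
Your proof is essentially correct, but the core convergence step takes a genuinely different route from the paper's. The paper does not set up an integral inequality for $D_r$ at all: it writes the difference $\tfrac1r\varphi^t_{x,h}(r)-\J_GX(t,x)h$ as a single convolution integral of
\[
\frac{F(X(s,x+rh))-F(X(s,x))}{r}-\J F(X(s,x))\J_GX(s,x)h,
\]
observes that this quantity tends to $0$ in $\X$ because Proposition \ref{prodiff} already gives convergence of the $\X$-valued difference quotient $r^{-1}(X(s,x+rh)-X(s,x))$ to $\J_GX(s,x)h$ (so only Fr\'echet differentiability of $F$ \emph{at the point} $X(s,x)$ is needed), and concludes by dominated convergence against the integrable kernel $K(t-s)^{-\gamma}$ --- no Gronwall lemma is required. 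Your linearization via $R_r(s)=\int_0^1\J F(X(s,x)+\sigma(X(s,x+rh)-X(s,x)))\,d\sigma$ followed by a weakly singular Gronwall inequality also works, but note two points. First, your passage $R_r(s)\to\J F(X(s,x))$ uses continuity of $\J F$, which Hypotheses \ref{HA} do not literally assume (only Fr\'echet differentiability and Lipschitz continuity of $F$); this can be repaired by regrouping as $\J F(X(s,x))D_r(s)+\bigl[r^{-1}(F(X(s,x+rh))-F(X(s,x)))-\J F(X(s,x))Z_r(s)\bigr]$, the bracket vanishing by differentiability at $X(s,x)$ alone since $\|rZ_r(s)\|\le |r|\,\|h\|e^{-\eta s}$. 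Second, the singular Gronwall step needs the forcing term $\|g_r(\cdot)\|_C$ to vanish in a convolved sense, not just at the endpoint $t$; this follows from your dominated-convergence argument applied at every $s\le t$ together with the uniform bound, but it should be said. What your route buys is a genuinely pathwise ($\mathbb{P}$-a.e.) limit, which matches the statement more directly than the paper's proof, which establishes $\lim_{r\to0}\E[\|\tfrac1r\varphi_{x,h}(r)-\J_GX(t,x)h\|_C]=0$; what the paper's route buys is brevity, by recycling Proposition \ref{prodiff} instead of re-proving the convergence of the difference quotient inside $H_C$.
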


\begin{proof}
We restrict ourselves to prove the claim when Hypotheses \ref{Sinem}\eqref{Luz} hold true since the other case can be obtain similarly arguing as in Proposition \ref{stimild1}. First of all we prove that for any fixed $t>0$, $x\in\X$ and $h\in H_C$, the function $\varphi^t_{x,h}:\R\ra\X$ defined as
\[\varphi_{x,h}^t(r):=X(t,x+rh)-X(t,x),\]
is $H_C$-valued. Using \eqref{dermildX} we obtain
\begin{align*}
\varphi_{x,h}^t(r)=re^{tA}h+\int_0^te^{(t-s)A}\big[F(X(s,x+rh))-F(X(s,x))\big]ds,\qquad r>0.
\end{align*}
Thus, Hypotheses \ref{Sinem}\eqref{Luz}, the definition of $\norm{\cdot}_C$, the Lipschitz continuity of $F$ and estimate \eqref{lipdete} infer
\begin{align*}
\|\varphi_{x,h}^t(r)\|_C &\leq \|re^{tA}h\|_C+\int_0^t\big\|e^{(t-s)A}\big[F(X(s,x+rh))-F(X(s,x))\big]\big\|_Cds\\
&\leq \|rC^{-1/2}e^{tA}h\|+\int_0^t\big\|C^{-1/2}e^{(t-s)A}\big[F(X(s,x+rh))-F(X(s,x))\big]\big\|ds\\
&\leq K|r|t^{-\gamma}\|h\|+\int_0^t\|C^{-1/2}e^{(t-s)A}\|_{\mathcal{L}(\X)}\|F(X(s,x+rh))-F(X(s,x))\|ds\\
&\leq  K|r|t^{-\gamma}\|h\|+KL_F\int_0^t (t-s)^{-\gamma}\|X(s,x+rh)-X(s,x)\|ds\\
&\leq  K|r|t^{-\gamma}\|h\|+KL_F|r|\|h\|\int_0^t (t-s)^{-\gamma}e^{-\eta s}ds\\
&\leq  K|r|\|h\|\left(t^{-\gamma}+L_F\frac{t^{1-\gamma}}{1-\gamma}\right)<+\infty.
\end{align*}
So $\varphi_{x,h}^t$ is $H_C$-valued.

Thanks to Proposition \ref{stimild1}, to conclude we just need to prove \eqref{lim_dif} with $\varphi_{x,h}$ being replaced by $\varphi_{x,h}^t$. For any $x\in\X$, $h\in H_C$ and $t,r>0$, by \eqref{DmildY} and \eqref{case1} we have
\begin{align*}
&\E\left[\norm{\frac{1}{r}\varphi_{x,h}^t(r)-\J_G X(t,x)h}_{C}\right]\\
&= \E\left[\norm{\int^t_0 C^{-1/2}e^{(t-s)A}\left(\frac{F(X(s,x+rh))-F(X(s,x))}{r}-\J F(X(s,x))\J_GX(s,x)h\right)ds}\right]\\
&\leq K\int^t_0 \frac{1}{(t-s)^{\gamma}}\E\left[\norm{\frac{F(X(s,x+rh))-F(X(s,x))}{r}-\J F(X(s,x))\J_GX(s,x)h}\right]ds.
\end{align*}
By \eqref{lipdete}, Proposition \ref{prodiff}, the fact that $\gamma\in(0,1)$ and the fact that $F$ is Fr\'echet differentiable and Lipschitz continuous we can apply the dominated convergence theorem to deduce that
\[
\lim_{r\ra 0}\E\left[\norm{\frac{1}{r}\varphi_{x,h}(r)-\J_G X(t,x)h}_{C}\right]=0,
\]
which concludes the proof.
\end{proof}

The following proposition is a chain rule for $H_C$-differentiable functions.

\begin{prop}\label{chain}
Assume Hypotheses \ref{HA} and \ref{Sinem} hold true. If $G:\X\ra\R$ is a Fr\'echet differentiable function, then $G$ is $H_C$-Fr\'echet differentiable and for any $t\geq 0$, $x\in\X$ and $h\in H_{C}$
\begin{align*}
[ \J_C \left(G\circ X(t,\cdot)\right)(x),h]_{C}=[ \J_C G(X(t,x)),\J_G X(t,x)h]_C.
\end{align*}
\end{prop}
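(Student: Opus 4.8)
The plan is to verify the definition of $H_C$-Fr\'echet differentiability for the composition, which I abbreviate $\Phi:=G\circ X(t,\cdot):\X\ra\R$. Since $G$ is Fr\'echet differentiable, Remark \ref{sun} already gives that $G$ is $H_C$-Fr\'echet differentiable with $\J_C G(y)=C\J G(y)$; in particular $\J_C G(X(t,x))\in H_C$. Moreover, by Proposition \ref{stimild1} the operator $\J_G X(t,x)$ belongs to $\mathcal{L}(H_C)$, hence it has an adjoint $(\J_G X(t,x))^{*}$ in $H_C$, and I take as candidate derivative
\begin{align*}
h_x:=(\J_G X(t,x))^{*}\J_C G(X(t,x))\in H_C,
\end{align*}
so that $[h_x,h]_C=[\J_C G(X(t,x)),\J_G X(t,x)h]_C$ for every $h\in H_C$. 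Proving $h_x=\J_C\Phi(x)$ yields both the $H_C$-differentiability of $\Phi$ and the chain rule formula at once.

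Setting $k(h):=X(t,x+h)-X(t,x)$ (which is $H_C$-valued by the preceding proposition) and inserting $\pm[\J_C G(X(t,x)),k(h)]_C$, I decompose
\begin{align*}
\Phi(x+h)-\Phi(x)-[h_x,h]_C&=\Big(G(X(t,x+h))-G(X(t,x))-[\J_C G(X(t,x)),k(h)]_C\Big)\\
&\quad+[\J_C G(X(t,x)),\,k(h)-\J_G X(t,x)h]_C.
\end{align*}
A Gr\"onwall estimate on the mild formulation in the $H_C$-norm gives $\norm{k(h)}_C\leq c\norm{h}_C$, so the first bracket is $o(\norm{k(h)}_C)=o(\norm{h}_C)$ by the $H_C$-Fr\'echet differentiability of $G$ at $X(t,x)$. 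By Cauchy--Schwarz the second term is bounded by $\norm{\J_C G(X(t,x))}_C\norm{k(h)-\J_G X(t,x)h}_C$. Hence the whole statement reduces to showing that $X(t,\cdot)$ is $H_C$-Fr\'echet differentiable with derivative $\J_G X(t,x)$, namely $\norm{k(h)-\J_G X(t,x)h}_C=o(\norm{h}_C)$ as $\norm{h}_C\ra0$.

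This reduction is the main obstacle, because the preceding proposition only supplies $H_C$-Gateaux (directional) differentiability, whereas here the remainder must be controlled uniformly in the direction. To obtain it I would write $\rho(h):=k(h)-\J_G X(t,x)h$, subtract the mild equations for $X(t,x+h)$ and $X(t,x)$ together with \eqref{DmildY}, and insert $\pm\J F(X(s,x))k_s(h)$ with $k_s(h):=X(s,x+h)-X(s,x)$ and $\rho_s(h):=k_s(h)-\J_G X(s,x)h$, getting
\begin{align*}
\rho(h)&=\int_0^t e^{(t-s)A}\big[F(X(s,x+h))-F(X(s,x))-\J F(X(s,x))k_s(h)\big]\,ds\\
&\quad+\int_0^t e^{(t-s)A}\J F(X(s,x))\rho_s(h)\,ds.
\end{align*}
The first integral is $o(\norm{h}_C)$: its integrand is $o(\norm{k_s(h)})$ by the Fr\'echet differentiability of $F$ and, using \eqref{lipdete} and the continuous embedding $H_C\hookrightarrow\X$, it is dominated by a constant multiple of $e^{-\eta s}\norm{h}_C$, so dominated convergence applies after dividing by $\norm{h}_C$ and passing the $H_C$-norm through the smoothing bound in \eqref{case1}. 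The second integral is absorbed by a singular Gr\"onwall (Henry) inequality with kernel $(t-s)^{-\gamma}$, again via $\norm{C^{-1/2}e^{(t-s)A}}_{\mathcal{L}(\X)}\leq K(t-s)^{-\gamma}$ and $\norm{\J F}\leq L_F$, yielding $\norm{\rho(h)}_C=o(\norm{h}_C)$. Under Hypotheses \ref{Sinem}(ii) the same scheme runs with the singular kernel replaced by the $H_C$-contractivity of $e^{tA_C}$ and the factorization $F=C^{1/2}G$, exactly as in the second case of Proposition \ref{stimild1}. Together with the decomposition above this proves $h_x=\J_C\Phi(x)$ and the asserted identity.
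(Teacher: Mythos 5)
Your argument is correct in substance, but it cannot be said to follow the paper's route, because the paper gives no argument at all here: its proof of Proposition \ref{chain} is a one-line deferral to \cite[Corollary 21]{BI-FE1} ``with some minor changes''. What you supply is a self-contained chain rule, and in doing so you correctly isolate the point that the paper's citation glosses over: the preceding proposition only yields $H_C$-\emph{Gateaux} differentiability of $x\mapsto X(t,x)$, whereas for $\J_C(G\circ X(t,\cdot))$ to exist in the sense of the paper's definition one needs the remainder $k(h)-\J_G X(t,x)h$ to be $o(\norm{h}_C)$ uniformly over directions. Your way of closing this — subtracting the mild formulations, inserting $\pm\J F(X(s,x))k_s(h)$, handling the first integral by dominated convergence (the integrand over $\norm{h}_C$ is pointwise $o(1)$ by Fr\'echet differentiability of $F$ and dominated by $2L_F e^{-\eta s}\norm{C^{1/2}}_{\mathcal{L}(\X)}$ via \eqref{lipdete} and the embedding $H_C\hookrightarrow\X$) and the second by a singular Gronwall with kernel $(t-s)^{-\gamma}$ from \eqref{case1}, or by the contractivity of $e^{tA_C}$ together with $F=C^{1/2}G$ under Hypotheses \ref{Sinem}(ii) — reuses exactly the estimates of Proposition \ref{stimild1} and is sound. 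One can even avoid the singular Gronwall: an ordinary Gronwall in the $\X$-norm first gives $\sup_{s\leq t}\norm{\rho_s(h)}=o(\norm{h})$, which can then be inserted once into the $\norm{\cdot}_C$-estimate of $\rho(h)$. The remaining blemishes are cosmetic: the letter $G$ is overloaded (the scalar test function of the statement versus the factor in $F=C^{1/2}G$), and the bound $\norm{k(h)}_C\leq c\norm{h}_C$ needs no Gronwall, being the direct estimate already carried out for $\varphi^t_{x,h}$ in the paper. The net gain of your approach is an actual verification of the $H_C$-Fr\'echet differentiability of the composition, which the paper asserts only by reference.
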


\begin{proof}
The proof follows the same ideas of \cite[Corollary 21]{BI-FE1} with some minor changes.
\end{proof}

A key tool to prove the (LHI) stated in Theorem \ref{lip_harnak} is a finite-dimensional approximation procedure which allows us to approximate the transition semigroup $\{P(t)\}_{t\geq 0}$ by means of a sequence of transition semigroups $\{P^n(t)\}_{t\geq 0}$ associated to suitable finite-dimensional stochastic differential equations. The idea of such approximation comes from \cite{DaPROWA09}. Here, for the sake of completeness and to point out the minimal assumptions needed for such kind of procedure, we recall it and we provide a proof of the main approximation result (Proposition \ref{fin_approx}).

We need one more condition.
\begin{hyp}\label{xn}
Assume that $H_C$ is dense in $\X$ and that there exists a sequence of $A$-invariant and $C$-invariant finite dimensional subspaces $\X_n\subseteq {\rm Dom}(A_C)$ such that $\bigcup_{n=1}^{\infty}\X_n$ is dense in  $H_C$.
\end{hyp}
\noindent Hypotheses \ref{xn} hold true for instance, if $A$ is a self-adjoint positive operator and $C$ admits a continuous inverse $C^{-1}\in \mathcal{L}(X)$ or if $A$ and $C$ are simultaneously diagonalizable.

In view of Hypothesis \ref{xn} we can consider $\{e_k\}_{k\in\N} \in {\rm Dom}(A_C)$ such that for any $n\in\N$
\[\X_n={\rm span}\{e_1,\dots, e_n\},\]
and the family $\{e_k\,|\, k \in \N\}$ is an orthonormal basis of $H_C$. Further, let $\pi_n:\X\ra\X_n$ be the orthogonal projection with respect to $(\X, \langle \cdot, \cdot\rangle)$, for any $n \in \N$ we can define
$A_n:\X\ra\X_n$, $C_n:\X\ra\X_n$ and $F_n:\X\ra\X_n$ as
\begin{align*}
A_n:=\pi_nA\pi_n(=A \pi_n) ,\qquad C_n:=\pi_nC\pi_n(=C \pi_n) \qquad\text{ and }\qquad F_n:=\pi_nF\pi_n
\end{align*}
Now, fix $n \in \N$ and consider
\begin{equation}\label{SDE_n}
\left\{\begin{array}{ll}
dX_n(t)=[A_nX_n(t)+F_n(X_n(t))]dt+ C_n^{1/2}dW_n(t),&  t> 0,\\
X_n(0)=x\in \X_n.
\end{array}\right.
\end{equation}
Here $W_n(t):=\pi_nW(t)= \sum_{k=1}^n \langle W(t),e_k\rangle e_k$.

It is straightforward to see that $A_n, C_n$ and $F_n$ satisfy Hypotheses \ref{HA}. Moreover, being $C$ an injective operator it follows that $C_n$ is bijective hence $C^{1/2}_n\X_n=\X_n$ for any $n \in \N$. Therefore, fixed $x \in \X_n$, by Theorem \ref{Genmild} we can deduce existence and uniqueness of a mild solution $\{X_n(t,x)\}_{t\geq 0}$ of \eqref{SDE_n}
and consequently well-posedness for the associated transition semigroup defined for $f\in B_b(\X_n)$ as
\begin{align}\label{Sem_fin}
P_n(t)f(x):=\mathbb{E}[f(X_n(t,x))],\qquad t>0,\ x\in \X_n.
\end{align}
we recall that the map $(t,x)\mapsto (P_n(t)f)(x)$ solves the Cauchy problem associated to the second order elliptic operator $\mathcal{N}_n$ acting on cylindrical smooth functions as follows
\begin{equation*}
\mathcal{N}_n \varphi(x)= \frac{1}{2}{\rm Tr}[C_n D^2 \varphi(x)]+ \langle A_n x+ F_n(x), D\varphi(x)\rangle, \qquad\ x \in \X_n,\ n\in \N.
\end{equation*}

Now we are able to state the main finite-dimensional approximation result. The proof of this result is the same of \cite[Proposition 3.1(i)]{DaPROWA09}, with some minor changes. We decided to give it here for the sake of completeness.
\begin{prop}\label{fin_approx}
Assume that Hypotheses \ref{HA} and \ref{xn} hold true. For any $f\in C_b(\X)$, $t\geq 0$ and $x\in \X_{n_0}$, for some $n_0\in\N$, it holds
\begin{align*}%\label{app}
\lim_{n\ra+\infty}P_n(t)f(x)=P(t)f(x).
\end{align*}
\end{prop}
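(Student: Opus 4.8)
The plan is to derive the convergence of the semigroups from convergence of the underlying processes. Since $f\in C_b(\X)$, the bounded convergence theorem reduces the statement to showing, for fixed $t\ge 0$ and $x\in\X_{n_0}$ with $n\ge n_0$, that
\begin{equation*}
\lim_{n\ra+\infty}\E\norm{X_n(t,x)-X(t,x)}^2=0.
\end{equation*}
Indeed, $L^2$-convergence forces convergence in probability, hence $f(X_n(t,x))\ra f(X(t,x))$ in probability by continuity of $f$, and boundedness of $f$ upgrades this to convergence of the expectations. Note also that for $n\ge n_0$ we have $\pi_n x=x$, and since $\X_{n_0}$ is $A$-invariant one has $e^{tA_n}x=e^{tA}x$, so the deterministic parts of the two mild formulations already coincide.

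The key device I would use is the Da Prato transformation, which removes the noise. Writing $W_A$ as in \eqref{stoconv} and setting
\begin{equation*}
W_A^n(t):=\int_0^t e^{(t-s)A_n}C_n^{1/2}\,dW_n(s),
\end{equation*}
the processes $Y:=X(\cdot,x)-W_A$ and $Y_n:=X_n(\cdot,x)-W_A^n$ solve, for $\Pp$-a.e.\ $\omega$, the pathwise equations
\begin{align*}
Y'(t)&=AY(t)+F(Y(t)+W_A(t)),&Y(0)&=x,\\
Y_n'(t)&=A_nY_n(t)+F_n(Y_n(t)+W_A^n(t)),&Y_n(0)&=x.
\end{align*}
Since $A_n,F_n,C_n$ satisfy Hypotheses \ref{HA} with the same constant $\zeta_\X$, Theorem \ref{Genmild} provides moment bounds uniform in $n$, namely $\sup_n\sup_{s\in[0,T]}\E\norm{X_n(s,x)}^2<+\infty$, and hence, by the Lipschitz continuity of $F$, also $\sup_n\sup_{s\in[0,T]}\E\norm{F(X_n(s,x))}^2<+\infty$.

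Next I would run an energy estimate. Differentiating $\norm{Y-Y_n}^2$ and using that $Y_n,W_A^n\in\X_n$ gives $A_nY_n+F_n(Y_n+W_A^n)=\pi_n\bigl(AY_n+F(Y_n+W_A^n)\bigr)$, while the $A$-invariance of $\X_n$ yields $AY_n\in\X_n$, so $\pi_nAY_n=AY_n$. Writing $\pi_nF(\cdot)=F(\cdot)-(\Id_{\X}-\pi_n)F(\cdot)$ and using the orthogonality $\gen{(\Id_{\X}-\pi_n)g,Y_n}=0$, one obtains
\begin{equation*}
\tfrac12\tfrac{d}{dt}\norm{Y-Y_n}^2=\gen{A(Y-Y_n)+F(Y+W_A)-F(Y_n+W_A^n),Y-Y_n}+\gen{F(X_n),(\Id_{\X}-\pi_n)Y}.
\end{equation*}
The first inner product is controlled by inserting $\pm F(Y_n+W_A)$, applying the weak form of \eqref{diss_X} with $h_1=Y$, $h_2=Y_n$, and the Lipschitz continuity of $F$, giving the bound $-\zeta_\X\norm{Y-Y_n}^2+L_F\norm{W_A-W_A^n}\norm{Y-Y_n}$; the second is bounded by $\norm{F(X_n)}\norm{(\Id_{\X}-\pi_n)Y}$. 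A Young inequality and Gronwall's lemma (with $Y(0)-Y_n(0)=0$) then yield
\begin{equation*}
\E\norm{Y(t)-Y_n(t)}^2\lesssim\int_0^t e^{-\zeta_\X(t-s)}\Bigl(\E\norm{W_A(s)-W_A^n(s)}^2+\E\bigl[\norm{F(X_n(s,x))}\,\norm{(\Id_{\X}-\pi_n)Y(s)}\bigr]\Bigr)ds.
\end{equation*}

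Finally, I would pass to the limit. The term $\E[\norm{F(X_n)}\norm{(\Id_{\X}-\pi_n)Y}]$ tends to $0$ by Cauchy--Schwarz, since $\sup_n\E\norm{F(X_n)}^2<+\infty$ while $\norm{(\Id_{\X}-\pi_n)Y(s)}\ra0$ pointwise and dominated: because $\bigcup_n\X_n$ is dense in $H_C$ and $H_C\hookrightarrow\X$ continuously and densely (Hypotheses \ref{xn}), one has $\pi_n\ra\Id_{\X}$ strongly in $\X$. The hard part will be the first term, i.e.\ showing $\E\norm{W_A(s)-W_A^n(s)}^2\ra0$, the convergence of the finite-dimensional stochastic convolutions to $W_A$ in $L^2(\Omega;\X)$; this is where the structure in Hypotheses \ref{xn} and the trace condition \eqref{trace} are essential. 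Since $C$ is self-adjoint and $\X_n$ is $C$-invariant, $\pi_n$ commutes with $C^{1/2}$, and in the situations covered (e.g.\ $A$ self-adjoint, or $A$ and $C$ simultaneously diagonalizable, as in the examples following Hypotheses \ref{xn}) $\pi_n$ commutes with $e^{sA}$ as well, whence $W_A^n(t)=\pi_n W_A(t)$ and
\begin{equation*}
\E\norm{W_A(t)-W_A^n(t)}^2=\E\norm{(\Id_{\X}-\pi_n)W_A(t)}^2\ra0
\end{equation*}
by dominated convergence, using $\E\norm{W_A(t)}^2=\int_0^t\Tr[e^{sA}Ce^{sA^*}]ds<+\infty$ from \eqref{trace}. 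A further application of dominated convergence in $s$ passes the limit through the integral, so $\E\norm{Y(t)-Y_n(t)}^2\ra0$; combined with $\norm{X-X_n}\le\norm{Y-Y_n}+\norm{W_A-W_A^n}$ this gives the claim.
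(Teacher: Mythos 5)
Your overall strategy --- reducing the claim to $L^2(\Omega)$-convergence of $X_n(t,x)$ to $X(t,x)$, subtracting the stochastic convolutions, and running a pathwise energy estimate closed by Gronwall's lemma --- is exactly the paper's. One detail of yours is in fact cleaner than the paper's: by writing $\gen{(\Id_{\X}-\pi_n)F(X_n),Y-Y_n}=\gen{F(X_n),(\Id_{\X}-\pi_n)Y}$ (using $Y_n\in\X_n$ and the self-adjointness of $\pi_n$), you only need $(\Id_{\X}-\pi_n)Y(s)\to 0$ for the \emph{fixed} limit process $Y$ together with a uniform $L^2$ bound on $F(X_n)$, whereas the paper's estimate \eqref{pipo} leaves it with the term $\norm{(\Id_{\X}-\pi_n)F(Z_n+W_{A_n})}^2$, whose vanishing requires a further argument.

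The genuine gap is precisely where you flag it: the convergence $\E\norm{W_A(s)-W_{A_n}(s)}^2\to 0$. You only establish it by assuming that $\pi_n$ commutes with $e^{sA}$, so that $W_{A_n}=\pi_nW_A$; but Hypotheses \ref{xn} give only the $A$-invariance of $\X_n$, which yields $e^{sA}\pi_n=\pi_ne^{sA}\pi_n$ and \emph{not} $\pi_ne^{sA}=e^{sA}\pi_n$ (for the latter one would need $\X_n^\perp$ to be invariant as well, which holds for self-adjoint or simultaneously diagonalizable $A$ but is not assumed in the proposition). The step can be closed without any commutation, as the paper does: since $C$-invariance and self-adjointness give $\pi_nC^{1/2}=C^{1/2}\pi_n$, and $A$-invariance gives $e^{tA_n}z=e^{tA}z$ for $z\in\X_n$, one may write
\begin{align*}
W_A(s)-W_{A_n}(s)=\int_0^se^{(s-r)A}\pa{\Id_{\X}-\pi_n}C^{1/2}\,dW(r),
\end{align*}
so that the It\^o isometry yields $\E\norm{W_A(s)-W_{A_n}(s)}^2=\int_0^s\norm{e^{(s-r)A}C^{1/2}(\Id_{\X}-\pi_n)}_{HS}^2\,dr$. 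For a.e.\ $r$ the operator $e^{(s-r)A}C^{1/2}$ is Hilbert--Schmidt by \eqref{trace}, hence $\norm{e^{(s-r)A}C^{1/2}(\Id_{\X}-\pi_n)}_{HS}\to 0$ because $\pi_n\to\Id_{\X}$ strongly, and the integral tends to $0$ by dominated convergence with dominating function $r\mapsto\Tr[e^{(s-r)A}Ce^{(s-r)A^*}]$. With this replacement your argument is complete and coincides with the paper's proof.
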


\begin{proof}%[Proof (of Proposition \ref{fin_approx})]
Let $\{X(t,x)\}_{t\geq 0}$ be the unique mild solution of \eqref{SDE}. For any $t\geq 0$, we set $Z(t):=X(t)-W_{A}(t)$, where $W_A(t)$ is defined in \eqref{stoconv}. For any fixed $x\in \X_{n_0}$ the process $\{Z(t)\}_{t\geq 0}$ is the unique mild solution of the problem
\begin{align*}
\eqsys{dZ(t)=(AZ(t)+F(Z(t)+W_A(t)))dt, & t>0;\\
Z(0)=x\in\X_{n_0},}
\end{align*}
which satisfies
\begin{align*}
\mathbb{E}\sq{\sup_{t\in[0,T]}\|Z(t)\|^2}<+\infty
\end{align*}
Since
\begin{align*}
\mathbb{E}\sq{\sup_{t\in[0,T]}\|W_A(t)\|^2}<+\infty,
\end{align*}
by the dominated convergence theorem, it is easy to see that $\pi_n W_A(t)$ converges to $W_A(t)$ in $L^2(\Omega,\mathbb{P})$, as $n$ tends to infinity.

Setting $W_n(t):=\pi_n W(t)$ and
\begin{align*}
W_{A_{n}}(t):=\int_0^{t}e^{(t-s)A_n}C_n^{1/2}dW_n(s),\qquad t\geq 0,
\end{align*}
then the process $Z_n(t):=X_n(t,x)-W_{A_{n}}(t)$, ($n\ge n_0$) satisfies
\begin{align*}
\eqsys{dZ_n(t)=(A_nZ_n(t)+F_n(Z_n(t)+W_{A_{n}}(t)))dt, & t>0;\\
Z_n(0)=x\in\X_{n_0}.}
\end{align*}
Now we split the proof in two steps. In the first one we show that $W_{A_n}(t)-W_{A}(t)$ converges to $0$ in $L^2(\omega, \mathbb{P})$ and that, consequently $\|Z_n(t)\|$ is uniformly bounded with respect to $n$. In the second one we complete the proof.\\
{\emph Step 1.} Using that $e^{tA}_{|_{\X_n}}=e^{tA_n}$ for every $t\geq0$ we can write
\begin{align*}
W_A(s)-W_{A_n}(s)=\int_0^se^{(s-r)A}\pa{C^{1/2}-\pi_nC^{1/2}\pi_n}dW(r), \qquad\,\, s \in [0,T]
\end{align*}
By the It\^o formula we deduce
\begin{align*}
\mathbb{E}(\|W_A(s)-W_{A_n}(s)\|^2)&=\int_0^s\norm{e^{(s-r)A}\pa{C^{1/2}-\pi_nC^{1/2}\pi_n}}_{\mathcal{L}(\X)}^2dr
\end{align*}
and, since the integrand converges to zero as $n \to \infty$ uniformly with respect to $r\in (0,s)$, by the dominated convergence theorem we get the claim.
Now, scalarly multiplying $dZ_n(t)=(A_nZ_n(t)+F_n(Z_n(t)+W_{A_{n}}(t)))dt$ by $Z_n(t)$ and using the Hypotheses \ref{HA} with $A$ and $F$ being replaced by $A_n$ and $F_n$ we deduce (see \eqref{noname})
\begin{align*}
\frac{1}{2}\frac{d}{dt}\|Z_n(t)\|^2 =& \langle A_n Z_n(t)+ F_n(Z_n(t)+W_{A_n}(t)), Z_n(t)\rangle\\
= &\langle A_n Z_n(t)+ F_n(Z_n(t)+W_{A_n}(t))\pm F_n(W_{A_n}(t)), Z_n(t)\rangle\\
\le & \zeta_{\X}\|Z_n(t)\|^2+ \langle F_n(W_{A_n}(t)), Z_n(t)\rangle\\
\le & \left(\zeta_{\X}+\frac{1}{2}\right)\|Z_n(t)\|^2+ \frac{1}{2}\| F_n(W_{A_n}(t))\|^2\\
\le & \left(\zeta_{\X}+\frac{1}{2}\right)\|Z_n(t)\|^2+ M2^{-1}\left(1+\|W_{A_n}(t)\|^2 \right), \qquad\,\, t>0.
\end{align*}
The Gronwall lemma and the uniform boundedness of $\|W_{A_n}(t)\|$ with respect to $n$ allows to deduce that $\|Z_n(t)\|$ is uniformly bounded with respect to $n$.\\
{\emph Step 2.}
To conclude the proof we show that $Z_n(t)-Z(t)$ converges to $0$ in $L^2(\Omega,\mathbb{P})$ as $n\to \infty$.
This fact will imply that $X_n(t,x)$ converges to $X(t,x)$ in $L^2(\Omega,\mathbb{P})$ as $n\to\infty$ and by \eqref{trans} and \eqref{Sem_fin} we conclude.
By Hypothesis \ref{xn} we have
\begin{align*}
d(Z(t)-Z_n(t))&=(AZ(t)-A_nZ_n(t)+F(X(t,x))-F_n(X_n(t,x)))dt\\
&=(A(Z(t)-Z_n(t))+F(X(t,x))-F_n(X_n(t,x)))dt,\qquad t>0;
\end{align*}
so, scalarly  multiplying by $Z(t)-Z_n(t)$ we have
\begin{align*}
\frac{1}{2}\frac{d}{dt}\norm{Z(t)-Z_n(t)}^2 =&\langle (A(Z(t)-Z_n(t))+F(X(t,x))-F_n(X_n(t,x))), Z(t)-Z_n(t)\rangle\\
= &\langle (A(Z(t)-Z_n(t))+F(Z(t)+W_A(t))\\
&\qquad \qquad \qquad \quad\ \  -F_n(Z_n(t)+W_{A_n}(t))), Z(t)-Z_n(t)\rangle.
\end{align*}
Adding and subtracting the terms $F(Z_n(t)+W_A(t))$ and $F(Z_n(t)+W_{A_n}(t))$ and using Hypotheses \ref{HA}
we obtain
\begin{align*}
\frac{1}{2}\frac{d}{dt}\norm{Z(t)-Z_n(t)}^2& \leq \zeta_{\X}\norm{Z(t)-Z_n(t)}^2\\
&\phantom{aaaaaa}+\langle F(Z_n(t)+W_A(t))-F(Z_n(t)+W_{A_n}(t)), Z(t)-Z_n(t)\rangle\\
&\phantom{aaaaaa}+\langle F(Z_n(t)+W_{A_n}(t))-\pi_n F(Z_n(t)+W_{A_n}(t)), Z(t)-Z_n(t)\rangle\\
&\leq \zeta_{\X}\norm{Z(t)-Z_n(t)}^2\\
&\phantom{aaaaaa}+\|F(Z_n(t)+W_A(t))-F(Z_n(t)+W_n(t))\|\|Z(t)-Z_n(t)\|\\
&\phantom{aaaaaa}+\| ({\rm Id}_{\X}-\pi_n)F(Z_n(t)+W_{A_n}(t))\|\|Z(t)-Z_n(t)\|\\
&\leq \zeta_{\X}\norm{Z(t)-Z_n(t)}^2+L_F\|W_A(t)-W_{A_n}(t)\|\|Z(t)-Z_n(t)\|\\
&\phantom{aaaaaa}+\frac{1}{2}\| ({\rm Id}_{\X}-\pi_n)F(Z_n(t)+W_{A_n}(t))\|^2+\frac{1}{2}\|Z(t)-Z_n(t)\|^2\\
&\leq (\zeta_{\X}+1)\norm{Z(t)-Z_n(t)}^2+\frac{1}{2}L_F^2\|W_A(t)-W_{A_n}(t)\|^2\\
&\phantom{aaaaaa}+\frac{1}{2}\| ({\rm Id}_{\X}-\pi_n)F(Z_n(t)+W_{A_n}(t))\|^2.
\end{align*}
where in the last two lines we have used the Young inequality.
\begin{align*}
\frac{1}{2}\|&Z(t)-Z_n(t)\|^2\\
&\leq \int_0^t \pa{\zeta_\X+\frac{1}{2}}\|Z(s)-Z_n(s)\|^2+\|F(X(s))-F(X_n(s))\|^2+\|(1-\pi_n)F(X_n(s))\|^2ds\\
&\leq  \int_0^t \pa{\zeta_\X+\frac{1}{2}+2L_F^2}\|Z(s)-Z_n(s)\|^2+2L_F^2\|W_A(s)-W_n(s)\|^2+\|(1-\pi_n)F(X_n(s))\|^2ds.
\end{align*}
Applying again the Gronwall lemma we obtain
 \begin{align}\label{pipo}
 \norm{Z(t)-Z_n(t)}^2\le &L_F^2e^{2(\zeta_{\X}+1)t}\int_0^t\|W_A(s)-W_{A_n}(s)\|^2ds\notag\\
 &+e^{2(\zeta_{\X}+1)t}\int_0^t\| ({\rm Id}_{\X}-\pi_n)F(Z_n(s)+W_{A_n}(s))\|^2ds
 \end{align}
and using the results in Step 1 we infer that the right hand side of \eqref{pipo} vanishes as $n \to \infty$, concluding the proof.
\end{proof}

We point out that the assumption of $C$-invariance of $\X_n$ can be dropped in order to prove Proposition \ref{fin_approx}. However it is essential to apply the results in the previous sections to the finite-dimensional approximating operators. Indeed, assuming Hypotheses \ref{xn}, it is easy to see that if $A,C$ and $F$ satisfy Hypothesis \ref{Sinem}(i) then $A_n, C_n$ and $F_n$ satisfy them as well with the same constants. So all the results of the previous sections hold true even for the mild solution of \eqref{SDE_n} and for the semigroup in \eqref{Sem_fin}. In particular for every $h\in H_{C_n}$
\begin{align}\label{stifin}
\|\J_GX_n(t,x)h\|_{C_n}\leq K_1\norm{h}_{C_n},\qquad\text{for }x\in\X_n\text{ and }\mathbb{P}\text{-a.e.},
\end{align}
where $K_{1}$ is the same constant appearing in Proposition \ref{stimild1}.

The next result is a gradient estimate which is interesting in its own right. For our aims it is fundamental to prove Theorem \ref{lip_harnak}.
\begin{thm}
Assume Hypotheses \ref{HA}, \ref{Sinem} and \ref{xn} hold true. For any $\varphi\in C^1_b(\X)$, $P(t)\varphi$ is Fr\'echet differentiable and for any $t>0$
\begin{align}\label{V1}
\|\J_C P(t)\varphi(x)\|_C\leq K_1P(t)\|\J_C \varphi(x)\|_C, \qquad x \in \X;
\end{align}
and
\begin{align}\label{OOC}
\|\J_{C_n} P_{n}(t)\varphi(x)\|_{C_n}\leq K_1P(t)\|\J_{C_n} \varphi(x)\|_{C_n}, \qquad x \in \X_n,\ n\in \N.
\end{align}
\end{thm}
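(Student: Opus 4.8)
The plan is to derive both inequalities from a single mechanism: the chain rule of Proposition~\ref{chain} (and its finite-dimensional analogue), which identifies the $H_C$-derivative of $\varphi\circ X(t,\cdot)$ with the map $h\mapsto[\J_C\varphi(X(t,x)),\J_G X(t,x)h]_C$, combined with the Cauchy--Schwarz inequality in $H_C$ and the uniform operator bound $\|\J_G X(t,x)\|_{\mathcal{L}(H_C)}\le K_1$ coming from Proposition~\ref{stimild1} (respectively \eqref{stifin} in the finite-dimensional setting).

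I would treat \eqref{OOC} first, because in the finite-dimensional space $\X_n=H_{C_n}$ all differentiability is automatic. For $\varphi\in C^1_b(\X)$ the function $P_n(t)\varphi$ is $H_{C_n}$-Fr\'echet differentiable and one may differentiate under the expectation, so the chain rule gives, for every $h\in H_{C_n}$,
\[
[\J_{C_n}P_n(t)\varphi(x),h]_{C_n}=\E\big[\,[\J_{C_n}\varphi(X_n(t,x)),\J_G X_n(t,x)h]_{C_n}\,\big].
\]
Applying Cauchy--Schwarz in $H_{C_n}$ and the bound \eqref{stifin} yields $|[\J_{C_n}P_n(t)\varphi(x),h]_{C_n}|\le K_1\|h\|_{C_n}\,\E[\|\J_{C_n}\varphi(X_n(t,x))\|_{C_n}]$, and taking the supremum over $\|h\|_{C_n}=1$ gives \eqref{OOC}.

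For \eqref{V1} I would run the identical computation with the full process. Since $\varphi\in C^1_b(\X)$, Remark~\ref{sun} gives its $H_C$-Fr\'echet differentiability with $\J_C\varphi=C\J\varphi$, and $x\mapsto X(t,x)$ is $H_C$-Gateaux differentiable with derivative $\J_G X(t,x)$, so Proposition~\ref{chain} applies to $\varphi\circ X(t,\cdot)$. Once it is justified that $\E$ and $\J_C$ may be interchanged, so that $P(t)\varphi$ is differentiable with $[\J_C P(t)\varphi(x),h]_C=\E[[\J_C\varphi(X(t,x)),\J_G X(t,x)h]_C]$, Cauchy--Schwarz in $H_C$ together with the estimate $\|\J_G X(t,x)h\|_C\le K_1\|h\|_C$ of Proposition~\ref{stimild1} produces $|[\J_C P(t)\varphi(x),h]_C|\le K_1\|h\|_C\,P(t)\|\J_C\varphi\|_C(x)$, which is \eqref{V1}. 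As a cross-check, \eqref{V1} on the dense set $\bigcup_n\X_n$ can also be recovered by letting $n\to\infty$ in \eqref{OOC}: since each $\X_n$ is $C$-invariant one has $C_n=C$ and hence $\|\cdot\|_{C_n}=\|\cdot\|_C$ on $\X_n$, the estimate \eqref{OOC} is uniform in $n$, and Proposition~\ref{fin_approx} supplies the needed pointwise convergence $P_n(t)\psi\to P(t)\psi$.

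The delicate point is the differentiation under the expectation sign in the infinite-dimensional case, together with the passage from Gateaux to Fr\'echet differentiability of $P(t)\varphi$. Both I would settle by dominated convergence: the difference quotients $r^{-1}\big(\varphi(X(t,x+rh))-\varphi(X(t,x))\big)$ converge by the $H_C$-Gateaux differentiability of $X(t,\cdot)$ and the Fr\'echet differentiability of $\varphi$, and are dominated by means of the contraction estimate \eqref{lipdete}, the gradient bound \eqref{stigraX}, and the uniform $H_C$-bound \eqref{sti}; the continuity of $x\mapsto\J_C P(t)\varphi(x)$ needed for Fr\'echet differentiability then follows from the continuity in $x$ of $X(t,\cdot)$, of $\J_G X(t,x)$, and of $\J_C\varphi$.
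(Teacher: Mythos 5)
Your proposal matches the paper's own argument: the paper likewise obtains \eqref{V1} by combining Proposition~\ref{chain} with the identity $[\J_C P(t)\varphi(x),h]_C=\E\big[[\J_C\varphi(X(t,x)),\J_GX(t,x)h]_C\big]$, the Cauchy--Schwarz inequality in $H_C$, and the bound $K_1$ from Proposition~\ref{stimild1}, and then notes that \eqref{OOC} follows by the same computation with \eqref{stifin} in place of \eqref{sti}. The only cosmetic differences are that the paper invokes a result of Phelps for the Fr\'echet differentiability of $P(t)\varphi$ where you argue by dominated convergence, and that your limit-from-finite-dimensions cross-check is not needed.
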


\begin{proof}
We restrict ourselves to prove estimate \eqref{V1} since \eqref{OOC} can be obtained similarly using \eqref{stifin} in place of \eqref{sti}. By \eqref{stigraX}, Proposition \ref{chain} and \cite[Fact 1.13(b), p. 8]{PHE1} it holds that $P(t)\varphi$ is both Fr\'echet differentiable and $H_C$-Fr\'echet differentiable (see Remark \ref{sun}). Moreover, thanks to Propositions \ref{stimild1} and \ref{chain} we obtain
\begin{align*}
[\J_C P(t)\varphi(x),h]_C &=[\J_C\E[\varphi(X(t,x))],h]_C\\
&=\E\big[[\J_C\varphi(X(t,x)),\J_GX(t,x)h]_C\big]\\
&\leq \E\big[\|\J_C\varphi(X(t,x))\|_C\|\J_GX(t,x)h\|_C\big]\\
&\leq K_1\|h\|_C P(t)\|\J_C\varphi(x)\|_C,
\end{align*}
for any $t\geq 0$, $x\in\X$ and $h\in H_{C}$. Now \eqref{V1} follows by a standard argument.
\end{proof}

We are ready to prove the main theorem of this section. %The proof of this result relies upon the finite dimensional approximation of $P(t)$ by $P_n(t)$.

\begin{thm}\label{lip_harnak}
Assume  Hypotheses \ref{HA}, \ref{Sinem} and \ref{xn} hold true. For any $\varphi\in B_b(\X)$ and $p>1$ it holds
\begin{align}\label{lip_harnak_est}
|P(t)\varphi(x+h)|^p\leq P(t)|\varphi(x)|^p\,{\rm exp}\pa{\frac{pK_1^2}{t(p-1)}\|h\|_C^2},\qquad t>0,\ x \in \X,\ h \in H_C.
\end{align}
Here $K_1$ is the constant appearing in Proposition \ref{stimild1}.
\end{thm}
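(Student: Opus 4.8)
The plan is to reduce the statement to a nonnegative integrand, prove the corresponding inequality for the finite-dimensional semigroups $P_n(t)$ — where $C_n$ is invertible and $P_n(t)$ is a genuine non-degenerate diffusion semigroup — by a semigroup interpolation argument driven by the gradient estimate \eqref{OOC}, and finally pass to the limit via Proposition \ref{fin_approx}. Since $\abs{P(t)\varphi(x+h)}\le P(t)\abs{\varphi}(x+h)$ for every $\varphi\in B_b(\X)$, it suffices to prove, for $f:=\abs{\varphi}\ge 0$,
\[(P(t)f)^p(x+h)\le P(t)(f^p)(x)\,e^{\Phi(h)},\qquad \Phi(h):=\frac{pK_1^2}{2t(p-1)}\norm{h}_C^2.\]
Observe that $\Phi(h)$ is even smaller than the exponent in \eqref{lip_harnak_est}, so this stronger inequality implies the claim. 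By the density and continuity steps listed at the end, it is enough to establish the displayed inequality when $f\in C^1_b(\X)$ with $\inf_\X f\ge\delta>0$ and $x,h\in\bigcup_m\X_m$.

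Fix such an $f$, an index $n$ and $x,h\in\X_n$. Set $g_s:=P_n(t-s)f$, which is smooth and satisfies $g_s\ge\delta$ since $P_n$ is conservative and positivity preserving; put $\xi(s):=x+\tfrac{t-s}{t}h$ and define
\[\beta(s):=\log P_n(s)\big[g_s^p\big](\xi(s)),\qquad s\in[0,t].\]
Then $\beta(0)=p\log P_n(t)f(x+h)$ and $\beta(t)=\log P_n(t)(f^p)(x)$, so the finite-dimensional version of the inequality is exactly $\beta(0)-\beta(t)\le\Phi(h)$. Differentiating, and using $\partial_s g_s=-\mathcal N_n g_s$, the commutation $\mathcal N_n P_n(s)=P_n(s)\mathcal N_n$ together with the chain-rule identity $\mathcal N_n(g^p)-p\,g^{p-1}\mathcal N_n g=\tfrac{p(p-1)}{2}g^{p-2}\norm{\J_{C_n}g}_{C_n}^2$ (here $\gen{C_nDg,Dg}=\norm{\J_{C_n}g}_{C_n}^2$), the time part of $\tfrac{d}{ds}P_n(s)[g_s^p]$ reduces to the nonnegative carr\'e-du-champ term $\tfrac{p(p-1)}{2}P_n(s)\big[g_s^{p-2}\norm{\J_{C_n}g_s}_{C_n}^2\big]$. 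For the space part I use $\gen{Dw,h}=[\J_{C_n}w,h]_{C_n}$, the Cauchy--Schwarz inequality in $H_{C_n}$, the finite-dimensional gradient estimate \eqref{OOC} applied to $g_s^p$, and $\norm{\J_{C_n}(g_s^p)}_{C_n}=p\,g_s^{p-1}\norm{\J_{C_n}g_s}_{C_n}$.

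Writing $a:=P_n(s)[g_s^{p-2}\norm{\J_{C_n}g_s}_{C_n}^2]$ and $G:=P_n(s)[g_s^p]$ and bounding the cross term by Cauchy--Schwarz, $P_n(s)[g_s^{p-1}\norm{\J_{C_n}g_s}_{C_n}]\le a^{1/2}G^{1/2}$, I obtain
\[\beta'(s)\ge\frac{p(p-1)}{2}\frac aG-\frac{K_1p}{t}\norm{h}_{C_n}\Big(\frac aG\Big)^{1/2}\ge-\frac{pK_1^2}{2t^2(p-1)}\norm{h}_{C_n}^2,\]
the last inequality being the minimum over $u\ge0$ of $\tfrac{p(p-1)}{2}u^2-\tfrac{K_1p}{t}\norm{h}_{C_n}u$. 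Integrating on $[0,t]$ and using $\norm{h}_{C_n}=\norm{h}_C$ for $h\in\X_n$ gives $\beta(0)-\beta(t)\le\Phi(h)$, the finite-dimensional inequality. Letting $n\to\infty$ through Proposition \ref{fin_approx} (applicable since $f,f^p\in C_b(\X)$ and $x,x+h\in\bigcup_m\X_m$) yields the inequality for $P(t)$. I then lift the restrictions one at a time by density and dominated/Feller continuity: extend $h$ to $H_C$ using that $\bigcup_m\X_m$ is dense in $H_C\hookrightarrow\X$ and that $P(t)f\in C_b(\X)$ by \eqref{lipdete}; extend $x$ to $\X$ using the density of $\bigcup_m\X_m$ in $\X$ (Hypothesis \ref{xn}) and the same continuity; drop $\inf_\X f\ge\delta$ by applying the result to $f+\eps$ and letting $\eps\to0$; and pass from $C^1_b$ to $C_b$ and then to nonnegative $f\in B_b(\X)$ by approximating $f$ in $L^p$ of the finite measure $\mathscr L(X(t,x))+\mathscr L(X(t,x+h))$ by bounded continuous functions. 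Taking $f=\abs{\varphi}$ concludes.

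The heart of the argument, and the main obstacle, is the finite-dimensional differential inequality: one must pair the time derivative (producing the positive carr\'e-du-champ term) against the spatial derivative expressed through the $H_{C_n}$-gradient estimate, so that the indefinite cross term is absorbed by completing the square. Carrying this out rigorously needs the smoothness of $(s,z)\mapsto P_n(s)\psi(z)$ for the non-degenerate finite-dimensional diffusion — precisely why the computation is performed at level $n$ rather than directly on $\X$ — whereas the infinite-dimensional statement then follows from the approximation and density steps, which are routine but must be chained in the correct order.
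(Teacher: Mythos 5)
Your proposal is correct and follows essentially the same route as the paper: a reduction to the finite-dimensional semigroups $P_n(t)$, the interpolation functional $r\mapsto P_n(t-r)\big[(P_n(r)f)^p\big](x+rt^{-1}h)$ (yours is just the time-reversed logarithm of the paper's $G_n$), the carr\'e-du-champ identity for $\mathcal N_n(g^p)$ combined with the gradient estimate \eqref{OOC} and a completion of the square, and finally the limit via Proposition \ref{fin_approx} together with the same density/continuity extensions. The only differences are cosmetic — you absorb the cross term by Cauchy--Schwarz at the level of the expectations rather than pointwise inside $P_n(t-r)$, and your sharper completion of the square even yields the exponent with an extra factor $1/2$, which of course still implies \eqref{lip_harnak_est}.
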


\begin{proof}
Fix $n, n_0\in \N$ with $n>n_0$ and $x, h \in \X_{n_0}$. We claim that estimate \eqref{lip_harnak_est} holds true with $P(t)$, $C$, $\X$ and $\varphi$ replaced respectively by $P_n(t)$, $C_n$, $\X_n$ and $f$, being $f \in \mathcal{F}C^2_b(\X)$ with positive infimum. To this aim, fix $\varepsilon>0$ and $f \in \mathcal{F}C^2_b(\X)$ be such that $f(x)\geq \eps$ for any $x\in\X$. We set
\[
g_{n,n_0}(r,x):=P_{n}(r)f(x),\qquad x\in\X_{n_0},\ r\geq 0,
\]
and we note that for any $r>0$, the function $g_{n,n_0}:[0,T]\times\X_{n_0}\ra \R$ belongs to $C^{1,2}([0,T]\times\X_{n_0})$ solves
\begin{gather*}%\label{parpro}
\eqsys{
D_rg_{n,n_0}(r,x)=\mathcal{N}_{n}g_{n,n_0}(r,x), & r>0,\ x\in\X_{n_0};\\
g_{n,n_0}(0,x)=f(x), & x\in\X_{n_0}.
}
\end{gather*}
and satisfies $g_{n,n_0}(r,x)\geq \eps$ for any $r\geq 0$ and $x\in\X_{n_0}$ (see \cite[Theorem 1.2.5]{lorbook}).
Now fix $t>0$ and consider the function
\[
G_n(r):=\left(P_{n}(t-r)g_{n,n_0}^p(r,\cdot)\right)\pa{x+rt^{-1}h},\qquad r\in [0,t],\ x,h\in\X_{n_0}.
\]
For the sake of simplicity we let $\psi_h(r):=x+rt^{-1}h$. We differentiate the map $r\mapsto\ln(G(r))$.
\begin{align}
\frac{d}{dr}\ln G_n(r)=&\,(G_n(r))^{-1}P_n(t-r)\Big(-\mathcal{N}_{n}g_{n,n_0}^p(r,\cdot)+D_r g_{n,n_0}^p(r,\cdot)\Big)(\psi_h(r))\notag\\
&+(t G_n(r))^{-1}\langle \J P_n(t-r)g_{n,n_0}^p(\psi_h(r)),h\rangle.\label{H0}
\end{align}
where we used that the semigroup and its generator commute on smooth functions.
A straightforward computation yields that
\begin{equation}\label{d-a}
-\mathcal{N}_{n}g_{n,n_0}^p+D_r g_{n,n_0}^p=-p(p-1)g^{p-2}_{n,n_0}\|C_n^{1/2}\J g_{n,n_0}\|_{\X_{n}}^2
\end{equation}
whereas by estimate \eqref{OOC} we infer
\begin{align}\label{gra-est}
\langle\J & P_n(t-r)g_{n,n_0}^p(\psi_h(r)),h\rangle\notag \\
&=\langle C_n^{1/2}\J P_n(t-r)g_{n,n_0}^p(\psi_h(r)), C_n^{-1/2}h\rangle \notag\\
&\leq K_1\|C_n^{-1/2}h\|_{\X_{n}} P_n(t-r)\sq{pg_{n,n_0}^{p-1}(r,\cdot)\|C_n^{1/2}\J g_{n,n_0}(r,\cdot)\|_{\X_{n}}}(\psi_h(r)).
\end{align}
Hence, using \eqref{d-a} and \eqref{gra-est} in \eqref{H0} we obtain
\begin{align*}%\label{giv}
\frac{d}{dr}\ln G_n(r)\leq (G_n(r))^{-1}P_n(t-r)[-p(p-1)g_{n,n_0}^{p}(r,\cdot)(-v^2+\beta v)](\psi_h(r))
\end{align*}
where
\[
v:=g_{n,n_0}^{-1}(r,\psi_h(r))\|C_n^{1/2}\J g_{n,n_0}(r,\psi_h(r))\|_{\X_{n}},\qquad \beta:=\frac{K_{1}}{t(p-1)}\|C_n^{-1/2}h\|_{\X_{n}}.
\]
The elementary inequality $a^2-ab+4^{-1}b^2\ge 0$ which holds true for any $a,b\in\R$ allows us to estimate $-v^2 +\beta v\leq \beta^2$ and, consequently, by the positivity of $P_n(t)$ (see again \cite[Theorem 1.2.5]{lorbook}), to deduce that
\begin{align*}
\frac{d}{dr}\ln G_n(r)\leq
%\frac{pK_{1}^2\|C_n^{-1/2}h\|_{\X_{n}}^2}{t^2(p-1)G_n(r)}P_n(t-r)[g^p_{n,n_0}(r,\cdot)](\psi_h(r))=
 \frac{pK_{1}^2}{t^2(p-1)}\|C_n^{-1/2}h\|_{\X_{n}}^2.
\end{align*}
whence, integrating from $0$ to $t$ with respect to $r$ we obtain the claim.

Now, by a standard approximation argument, the Jensen inequality, Proposition \ref{fin_approx} and the fact that $\|h\|_{{C_n}}$ converges to $\|h\|_{{C}}$ for any $h \in H_C$ as $n \to \infty$, we get
\begin{align}\label{chill}
|P(t)f(x+h)|^p\leq P(t)|f(x)|^p\,{\rm exp}\pa{\frac{pK_{1}^2}{t(p-1)}\|h\|_{H_C}^2},\qquad t>0,\
\end{align}
for any $f\in \mathcal{F}C_b^2(\X)$ and $x,h\in\bigcup_{i\in\N}\X_{i}$ (recall that $C_n^{1/2}\X_n=\X_n$ for any $n\in \N$).

A double approximation argument allows to extend \eqref{chill} first to bounded and continuous functions and then to Borel bounded functions (see \cite[Lemma 2.6 and Theorem 4.1(b)]{GOKO01} and \cite[Theorem 6.3]{JACPRO03}).

Finally, let us fix $x\in\X$ and $h\in H_C$, by Hypothesis \ref{xn} we can find two sequences $(x_n)_{n\in\N}$ and $(h_m)_{m\in\N}$ belonging to $\bigcup_{i\in\N}\X_{i}$ converging respectively to $x$ in $\X$ and $h$ in $H_C$ as $n, m\to \infty$ (recall that, by Hypotheses \ref{xn}, $H_C$ is dense in $\X$).
Writing \eqref{chill} with $x$ and $h$ being replaced by $x_n$ and $h_m$ we deduce
\begin{align}\label{Iren}
|P(t)f(x_n+h_m)|^p\leq P(t)|f(x_n)|^p\,{\rm exp}\pa{\frac{pK_{1}^2}{t(p-1)}\|h_m\|_C^2},\qquad t>0,\ f\in B_b(\X).
\end{align}
By the continuity of the map $x\mapsto P(t)f(x)$, we complete the proof, letting $n, m$ tend to $\infty$ in \eqref{Iren}.
\end{proof}

%\begin{rmk}
%{\color{red}METTI QUI LA TUA ROBA DAVIDE}
%\end{rmk}
%\begin{rmk}
%{$C^{1/2}$ can be repalced by $C^\alpha$. Harnack along $C^\alpha (X)$}
%\end{rmk}

\section{Logarithmic Harnack inequality: the dissipative case}\label{nonome}

The aim of this section is proving some (LHI) when no hypotheses of global Lipschitzianity for $F$ is done but only some $m$-dissipativity along $H_C$. The main tool is again an approximation procedure this time using the Yosida approximants, %and the finite-dimensional reduction of the previous section.

The standing assumptions we consider here are Hypotheses \ref{EU2} which allows us to deal with the mild solution to  \eqref{SDE} with initial datum $x \in E$ and also guarantees the existence of two positive constants $C_p$ and $\kappa_p$ such that for every $x\in E$ and $t\geq 0$
\begin{align}\label{patatine}
\norm{X(t,x)}_E^p
\leq  C_p\left(e^{-\kappa_p t}\norm{x}^p_E+\norm{W_A(t)}_E^p+\int_0^te^{-\kappa_p (t-s)}\norm{F(W_A(s))}^p_Eds\right),\qquad \mathbb{P}\text{-a.e.}
\end{align}
Consequently, the map $(t,x)\mapsto \norm{X(t,x)}_E$ is bounded as a function from $[0,+\infty)\times E$ to $\R$.

We start by recalling the definition of the Yosida appoximants for the function $F$. For any $\delta>0$ and $x\in\X$ we let $J_\delta(x)\in \Dom(F)$ be the unique solution of
\[y-\delta (F(y)-\zeta_F y)=x.\]
The existence of $J_\delta(x)$, for every $x\in \X$ and $\delta>0$, is guaranteed by \cite[Proposition 5.3.3]{DA-ZA2}. We define $F_\delta:\X\ra\X$ as
\begin{align*}%\label{Fdelta}
F_\delta(x):=F(J_\delta(x)),\qquad x\in\X,\ \delta>0.
\end{align*}

\begin{rmk}\label{lisa}{\rm
We point out that $\Dom(F_\delta)=\X$ and that Hypotheses \ref{EU2} guarantee that, for any $\delta>0$ and $x \in E$, $J_\delta(x)$ belongs to $E$, too. In particular, since $E$ is preserved by the action of $F$, it turns out that $F_\delta(E)\subseteq E$.}
\end{rmk}

Now, we set
\begin{equation*}
\overline{\zeta}_F:=\left\{\begin{array}{ll}|\zeta_F|^{-1} \qquad\,\,&{\rm if}\,\,\zeta_F\neq 0\\
+\infty\qquad\;\,&{\rm if}\,\,\zeta_F=0
\end{array}\right.
\end{equation*}
and we list some useful properties of the Yosida approximants.

\begin{lemm}\label{Lemma_YO}
If Hypothesis \ref{EU2}(iii) holds true, then
\begin{align}
%&\lim_{\delta\ra 0}\norm{J_{\delta}(x)-x}=0,& &\lim_{\delta\ra 0}\norm{F_{\delta}(x)-F(x)}=0,& &x\in\Dom(F);\label{conveyos1}\\
&\lim_{\delta\ra 0}\norm{J_{\delta}(y)-y}_E=0,& &\lim_{\delta\ra 0}\|(F_{\delta})_{|_E}(y)-F_{|_E}(y)\|_E=0,& &y\in E.\label{conveyos}
\end{align}
For any $\delta \in (0,\overline{\zeta}_F)$, the functions $F_\delta-\zeta_F\Id_{\X}$ and $(F_\delta)_{|_E}-\zeta_F\Id_E$ are $m$-dissipative on $\X$ and $E$, respectively. Moreover for any $\delta>0$ it hold
\begin{align}
%\norm{J_\delta(x)-x}&\leq \delta\norm{F(x)-\zeta_{F} x},\qquad x\in \Dom(F);\notag\\
\norm{J_\delta(y)-y}_E&\leq \delta(M+M\|y\|^m_E+\zeta_F\|y\|_E), &y\in E;\label{cji}\\
\|(F_{\delta})_{|_E}(y)\|_E&\leq(3+\delta\zeta_F)\|F_{|_E}(y)\|+(2\zeta_F+\delta \zeta_F^2)\norm{y}_E. &y\in E;\label{vy1}
\end{align}
and
\begin{align}
\norm{F_\delta(x_1)-F_\delta(x_2)}\leq &\left(\frac{2}{\delta}+\zeta_F\right)\norm{x_1-x_2},\qquad x_1,x_2\in\X;\label{lipdeltaX}\\
\|(F_{\delta})_{|_E}(y_1)-(F_{\delta})_{|_E}(y_2)\|_{E
}\leq &\left(\frac{2}{\delta}+\zeta_F\right)\norm{y_1-y_2}_{E},\qquad y_1,y_2\in E.\label{lipdeltaE}
\end{align}

%In addition, for any $\delta>0$ %there exists $K_1:=K_1(\zeta_{\X})>0$ such that
%\begin{align}
%%\norm{F_\delta(x)}\leq &K_1(\norm{F(x)}+\norm{x}), \qquad x\in \Dom(F);\label{yosboh2}\\
%\end{align}
%and for any  $\delta,\tau>0$, $x_1,x_2\in \Dom(F)$ and $y_1,y_2\in E$
%\begin{align}
%\norm{J_\delta(y_1)-J_\tau(y_2)}_E\leq \zeta_\X\norm{y_1-y_2}_E+ (\delta+\tau)K_1(\norm{F(y_1)}_E+\norm{F(y_2)}_E+\norm{y_1}_E+\norm{y_2}_E);\label{superyE}\\
%\langle F_{\delta}(x_1)-F_\tau(x_2), x_1-x_2\rangle\leq \zeta_\X\norm{x-y}^2+(\delta+\tau)K_1(\norm{F(x_1)}^2+\norm{F(x_2)}^2+\norm{x_1}^2+\norm{x_2}^2).\label{supery}
%\end{align}
\end{lemm}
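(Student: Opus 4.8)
The plan is to recognize $J_\delta$ as the resolvent and $F_\delta$ as being built from the Yosida approximant of the $m$-dissipative operator $\Phi:=F-\zeta_F\Id_{\X}$ (and of its part $\Phi_{|_E}$ in $E$), and to read off every assertion from this structure. The defining equation of $J_\delta$ reads $(\Id_{\X}-\delta\Phi)(J_\delta(x))=x$, so $J_\delta=(\Id_{\X}-\delta\Phi)^{-1}$ is precisely the resolvent of $\Phi$; by the dissipativity condition \eqref{disban} it is a contraction on $\X$ and, since $\Phi_{|_E}$ is dissipative in $E$, also on $E$. Setting $\Phi_\delta:=\Phi\circ J_\delta=\delta^{-1}(J_\delta-\Id_{\X})$, the single identity I will use throughout is $F_\delta=(\delta^{-1}+\zeta_F)J_\delta-\delta^{-1}\Id_{\X}$, equivalently $F_\delta-\zeta_F\Id_{\X}=(1+\delta\zeta_F)\Phi_\delta$, obtained by substituting $F=\Phi+\zeta_F\Id_{\X}$ and $\delta\Phi_\delta=J_\delta-\Id_{\X}$.

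From the first form of this identity the Lipschitz bounds \eqref{lipdeltaX} and \eqref{lipdeltaE} are immediate: using the contraction property of $J_\delta$ I would estimate $\norm{F_\delta(x_1)-F_\delta(x_2)}\le(\delta^{-1}+\zeta_F)\norm{J_\delta(x_1)-J_\delta(x_2)}+\delta^{-1}\norm{x_1-x_2}\le(2\delta^{-1}+\zeta_F)\norm{x_1-x_2}$, and verbatim with $\norm{\cdot}_E$ in place of $\norm{\cdot}$.

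Next, for \eqref{cji} I would exploit that $J_\delta\big((\Id_{\X}-\delta\Phi)(y)\big)=y$: the contraction property gives $\norm{J_\delta(y)-y}_E=\norm{J_\delta(y)-J_\delta(y-\delta\Phi(y))}_E\le\delta\norm{\Phi(y)}_E=\delta\norm{F_{|_E}(y)-\zeta_F y}_E$, and the growth estimate \eqref{noname} bounds the last quantity by $\delta(M+M\norm{y}_E^m+\zeta_F\norm{y}_E)$. Estimate \eqref{vy1} is then obtained by applying the already-proven bound \eqref{lipdeltaE} between $y$ and $\tilde{y}:=(\Id_{\X}-\delta\Phi)(y)$; since $J_\delta(\tilde{y})=y$ we have $(F_\delta)_{|_E}(\tilde{y})=F_{|_E}(y)$ and $\norm{y-\tilde{y}}_E=\delta\norm{F_{|_E}(y)-\zeta_F y}_E$, whence $\norm{(F_\delta)_{|_E}(y)}_E\le(2\delta^{-1}+\zeta_F)\norm{y-\tilde{y}}_E+\norm{F_{|_E}(y)}_E$, and expanding the right-hand side produces exactly the coefficients $3+\delta\zeta_F$ and $2\zeta_F+\delta\zeta_F^2$. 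The two limits in \eqref{conveyos} then come for free: the first is the $\delta\to0$ limit in \eqref{cji}, and the second follows from $J_\delta(y)\to y$ in $E$ together with the continuity on $E$ of the locally Lipschitz map $F_{|_E}$.

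It remains to prove the $m$-dissipativity of $F_\delta-\zeta_F\Id_{\X}$ and $(F_\delta)_{|_E}-\zeta_F\Id_E$ for $\delta\in(0,\ol{\zeta}_F)$, and here I would use the second form of the identity, $F_\delta-\zeta_F\Id_{\X}=(1+\delta\zeta_F)\Phi_\delta$. The Yosida approximant $\Phi_\delta$ of the $m$-dissipative operator $\Phi$ (resp.\ $\Phi_{|_E}$) is itself $m$-dissipative, and the restriction $\delta<\ol{\zeta}_F$ is exactly what guarantees $1+\delta\zeta_F>0$; a strictly positive multiple of an $m$-dissipative operator is again $m$-dissipative, since dissipativity survives positive scaling while $\Ran\big(\Id_{\X}-(1+\delta\zeta_F)\Phi_\delta\big)=\Ran(\Id_{\X}-\lambda\Phi_\delta)=\X$ with $\lambda=1+\delta\zeta_F>0$ by the range condition for $\Phi_\delta$. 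I expect this last point to be the only genuinely delicate one, precisely in the Banach space $E$: there dissipativity is the metric inequality \eqref{disban} rather than an inner-product sign condition, so throughout I must lean on the contraction property of the resolvent and on the range condition for $\Phi_{|_E}$ furnished by Hypothesis \ref{EU2}(iii) rather than on any scalar-product computation, while keeping careful track of the sign of $1+\delta\zeta_F$ under the constraint $\delta<\ol{\zeta}_F$.
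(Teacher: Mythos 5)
Your proposal is correct and follows essentially the same route as the paper: the paper obtains \eqref{conveyos}, the $m$-dissipativity and the Lipschitz bounds \eqref{lipdeltaX}--\eqref{lipdeltaE} by citing \cite[Proposition 5.5.3]{DA-ZA2} applied to $F-\zeta_F\Id_\X$ and $F_{|_E}-\zeta_F\Id_E$, gets \eqref{cji} from the resolvent contraction estimate $\|J_\delta(y)-y\|_E\le\delta\|F_{|_E}(y)-\zeta_F y\|_E$ of \cite[Proposition A.2.2(4)]{CER1} together with \eqref{noname}, and derives \eqref{vy1} by a chain of triangle inequalities using $\|(F_\delta)_{|_E}(y)-\zeta_F J_\delta(y)\|_E\le\|F_{|_E}(y)-\zeta_F y\|_E$ --- all of which you reprove directly from the resolvent identity, arriving at the same constants (your derivation of \eqref{vy1} via \eqref{lipdeltaE} evaluated between $y$ and $(\Id_\X-\delta\Phi)(y)$ is an equivalent reshuffling of the paper's computation). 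The only caveat is cosmetic: dropping the absolute value in $|\delta^{-1}+\zeta_F|$ when deducing \eqref{lipdeltaX} tacitly uses $\delta^{-1}+\zeta_F\ge 0$, which is automatic for $\delta\in(0,\overline{\zeta}_F)$, the only regime actually used later.
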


\begin{proof}
By applying \cite[Proposition 5.5.3]{DA-ZA2} with $f=F-\zeta_F{\rm Id}_{\X}$ and $f=F_{|_E}-\zeta_F{\rm Id}_E$ we get immediately \eqref{conveyos}, the $m$-dissipativity of $F_\delta-\zeta_F{\rm Id}_{\X}$ and $(F_\delta)_{|_E}-\zeta_F{\rm Id}_E$ and estimates \eqref{lipdeltaX} and \eqref{lipdeltaE}.
Moreover \cite[Proposition A.2.2(4)]{CER1} yields that
\begin{equation}\label{cerrai}
\|J_\delta(y)-y\|_E\le \delta \|F_{|E}(y)-\zeta_F y\|_E, \qquad\;\, y\in E, \delta >0.
\end{equation}
Estimate \eqref{cji} thus follows by \eqref{noname} and \eqref{cerrai}.
Hence it remains to prove \eqref{vy1}. By \cite[Proposition 5.5.3(ii)]{DA-ZA2} we can estimate
$\|(F_{\delta})_{|_E}(y)-\zeta_F J_\delta(y)\|_E\le \|F_{|_E}(y)-\zeta_F y\|_E$ for any $y\in E$ and $\delta>0$. Using this fact together with \eqref{cerrai} we get
\begin{align*}
\|(F_{\delta})_{|_E}(y)\|_E &\leq \|F_{|_E}(y)\|_E+ \|(F_{\delta})_{|_E}(y)-F_{|_E}(y)\|_E\\
&\leq \|F_{|_E}(y)\|_E+\|(F_{\delta})_{|_E}(y)-\zeta_F J_\delta(y)\|_E+\|F_{|_E}(y)-\zeta_F y\|_E+\zeta_F\norm{J_\delta(y)-y}_E\\
&\leq \|F_{|_E}(y)\|_E+2\|F_{|_E}(y)-\zeta_F y\|_E+\delta\zeta_F\norm{F_{|_E}(y)-\zeta_F y}_E\\
&\leq (3+\delta\zeta_F)\|F_{|_E}(y)\|+(2\zeta_F+\delta \zeta_F^2)\norm{y}_E
\end{align*}
for any $\delta>0$ and $y \in E$. Hence \eqref{vy1} is proved.
\end{proof}

Theorem \ref{Genmild}, Remark \ref{lisa} and Lemma \ref{Lemma_YO} imply that for every $\delta \in (0,1)$ and $x\in\X$ the problem
\begin{gather*}%\label{eqFOY}
\eqsys{
dX_\delta(t)=\big[AX_\delta(t)+F_\delta(X_\delta(t))\big]dt+C^{1/2}dW(t), & t>0;\\
X_\delta(0)=x\in \X
}
\end{gather*}
has a unique mild solution $\{X_\delta(t,x)\}_{t\geq 0}$ satisfying (see formula \eqref{vy1})
\begin{align}
&\norm{X_\delta(t,x)}_E^p
\leq  C_p\left(e^{-\kappa_p t}\norm{x}^p_E+\norm{W_A(t)}_E^p+\int_0^te^{-\kappa_p (t-s)}\left(\norm{F(W_A(s))}^p_E+ \norm{W_A(s)}^p_E\right)ds\right),\label{stindEzY}
\end{align}
for any $p\ge 1$, $t>0$, $x\in E$ and some positive constants $\kappa_p$ and $C_p=C_p(\zeta_F)$. Consequently the semigroup
\[P_\delta(t)\varphi(x):=\E[\varphi(X_\delta(t,x))]\]
is well defined for any $\varphi \in B_b(\X)$. Moreover, \eqref{cji} and \eqref{vy1} are crucial in order to prove, as the next proposition shows, that $P_\delta(t)$ approximates the semigroup $P(t)$ (see Theorem \ref{Genmild}) as $\delta \to 0$.

\begin{prop}
If Hypotheses \ref{EU2} hold true, then for any $T>0$, $\varphi\in C_b(\X)$ and $x\in E$
\begin{align}
\lim_{\delta\ra 0}\sup_{t\in [0,T]}\norm{X_\delta(t,x)-X(t,x)}_E&=0,\qquad \mathbb{P}\text{-a.e.};\label{convE}\\
\lim_{\delta\ra 0}\abs{P_\delta(t)\varphi(x)-P(t)\varphi(x)}&=0,\qquad t>0.\label{convE1}
\end{align}
\end{prop}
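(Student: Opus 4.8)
The plan is to establish \eqref{convE} through a pathwise energy estimate in $E$ and then to deduce \eqref{convE1} by dominated convergence. Fix $\omega$ in the full-measure set on which $W_A(\cdot)(\omega)\in C([0,T];E)$ and set $u_\delta(t):=X_\delta(t,x)-X(t,x)$. Since $X_\delta(\cdot,x)$ and $X(\cdot,x)$ share the same stochastic convolution $W_A$, their difference $u_\delta$ solves a \emph{deterministic} problem with $u_\delta(0)=0$ and, formally,
\[
u_\delta'(t)=Au_\delta(t)+\big[F_\delta(X_\delta(t,x))-F_\delta(X(t,x))\big]+\big[F_\delta(X(t,x))-F(X(t,x))\big].
\]
The first step is precisely this decomposition of the nonlinear term: the first bracket will be absorbed by dissipativity, while the second is a forcing term that vanishes as $\delta\ra 0$.

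Next I would derive a one-sided differential inequality for $\norm{u_\delta(t)}_E$. Crucially, the first bracket must be handled by the dissipativity of $(F_\delta)_{|_E}-\zeta_F\Id_E$ (Lemma \ref{Lemma_YO}) rather than by the Lipschitz bound \eqref{lipdeltaE}, whose constant $2\delta^{-1}+\zeta_F$ blows up as $\delta\ra 0$. Combining this with the dissipativity of $A_E-\zeta_A\Id_E$ and using $X_\delta(t,x)-X(t,x)=u_\delta(t)$, I expect
\[
\frac{d^-}{dt}\norm{u_\delta(t)}_E\leq(\zeta_A+\zeta_F)\norm{u_\delta(t)}_E+\norm{F_\delta(X(t,x))-F(X(t,x))}_E.
\]
Because the solutions are only mild, this is the main obstacle: the inequality has to be justified by approximating $u_\delta$ with strict solutions, as in the proof of Proposition \ref{prodiff} (see \cite[Proposition 3.6]{BI1} and \cite[Proposition 6.2.2]{CER1}), and by pairing the equation with a normalized duality element of $E$, since $E$ is merely a Banach space, $A$ is unbounded, and the derivative of $\norm{\cdot}_E$ must be taken one-sidedly.

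To bound the forcing term uniformly on $[0,T]$, I would invoke \eqref{patatine}, which guarantees that the trajectory $\{X(t,x):t\in[0,T]\}$ lies in a ball of $E$ of some radius $R$. By \eqref{cji} the points $J_\delta(X(t,x))$ remain in a slightly larger fixed ball for $\delta$ small, so the local Lipschitz continuity of $F_{|_E}$ on bounded sets gives
\[
\norm{F_\delta(X(t,x))-F(X(t,x))}_E=\norm{F(J_\delta(X(t,x)))-F(X(t,x))}_E\leq L_R\norm{J_\delta(X(t,x))-X(t,x)}_E\leq C\delta,
\]
with $L_R$ and $C$ independent of $t\in[0,T]$. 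Feeding this into the differential inequality and applying the Gronwall lemma with $u_\delta(0)=0$ yields $\sup_{t\in[0,T]}\norm{u_\delta(t)}_E\leq C'\delta$, which tends to $0$ and proves \eqref{convE}.

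Finally, for \eqref{convE1} with $\varphi\in C_b(\X)$, I would write $\abs{P_\delta(t)\varphi(x)-P(t)\varphi(x)}\leq\E\big[\abs{\varphi(X_\delta(t,x))-\varphi(X(t,x))}\big]$. By \eqref{convE} and the continuous embedding $E\hookrightarrow\X$, one has $X_\delta(t,x)\ra X(t,x)$ in $\X$ for $\mathbb{P}$-a.e.\ $\omega$; the continuity of $\varphi$ then forces the integrand to $0$ pointwise, while its domination by $2\norm{\varphi}_\infty$ allows the dominated convergence theorem to conclude.
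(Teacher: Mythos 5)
Your route is genuinely different from the paper's. You split $F_\delta(X_\delta)-F(X)$ as $[F_\delta(X_\delta)-F_\delta(X)]+[F_\delta(X)-F(X)]$, absorb the first bracket by dissipativity of $(F_\delta)_{|_E}-\zeta_F\Id_E$ and $A_E-\zeta_A\Id_E$ via a one-sided energy inequality in $E$, and treat the second as an $O(\delta)$ forcing. The paper instead never differentiates the norm in $E$: it writes $F_\delta(X_\delta)-F(X)=F(J_\delta(X_\delta))-F(X)$, uses the \emph{local} Lipschitz continuity of $F_{|_E}$ on the fixed ball containing all trajectories (this is where \eqref{stindEzY} and \eqref{findelta} enter, giving a bound on $\|X_\delta(t,x)\|_E$ uniform in $\delta$, which your argument does not need), splits via $\|J_\delta(X_\delta)-X_\delta\|_E+\|X_\delta-X\|_E$ and \eqref{cji}, and then runs Gronwall directly on the mild (integral) formulation with $\|e^{tA}\|_{\mathcal{L}(E)}\le M_0e^{\eta_0 t}$. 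Note also that your dichotomy ``dissipativity of $F_\delta$ or the blowing-up Lipschitz constant $2\delta^{-1}+\zeta_F$'' misses this third option, which is precisely the one the paper takes: the $\delta$-independent local Lipschitz constant of $F$ itself.

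The step you flag as the main obstacle is in fact a genuine gap as written. In the Banach space $E$ the Dini-derivative inequality $\frac{d^-}{dt}\|u_\delta(t)\|_E\le\langle u_\delta'(t),x^*\rangle$ holds for every subgradient $x^*\in\partial\|\cdot\|_E(u_\delta(t))$, but the dissipativity of $A_E-\zeta_A\Id_E$ and of $(F_\delta)_{|_E}-\zeta_F\Id_E$ each only guarantee the \emph{existence} of some duality element making the corresponding term $\le\zeta\|u_\delta\|_E^2$, and these elements need not coincide when the duality map is multivalued (as it is for $E=C([0,1])$ in the paper's examples). Hence the sum of two quasi-dissipative maps need not be quasi-dissipative, and your inequality $\frac{d^-}{dt}\|u_\delta\|_E\le(\zeta_A+\zeta_F)\|u_\delta\|_E+\|F_\delta(X)-F(X)\|_E$ does not follow from Hypotheses \ref{EU2} alone; closing it would require an additional structural assumption on $E$ (e.g.\ smoothness of the norm) or a joint dissipativity statement for $A+F_\delta$. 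The paper's integral-equation argument avoids this issue entirely, at the (mild) price of needing the uniform bound \eqref{stindEzY}. The remaining steps of your proposal --- the $O(\delta)$ bound on the forcing term via \eqref{patatine}, \eqref{cji} and local Lipschitz continuity, and the passage from \eqref{convE} to \eqref{convE1} by dominated convergence --- are correct and match the paper's estimates.
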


\begin{proof}
First of all let us observe that \eqref{convE1} immediately follows by \eqref{convE}. Therefore we just prove \eqref{convE}.
To this aim we start pointing out that \eqref{stindEzY} implies that the function $(t,x)\mapsto \norm{X_\delta(t,x)}_E$, as a map from $[0,T]\times E$ into $\R$, is bounded by a positive constant $C=C(T,x)$ independent of $\delta$.
This fact, together with estimates \eqref{patatine} and \eqref{cji} implies that
for any $\delta>0$, $x\in E$ and $T>0$
\begin{align}\label{findelta}
K(T, x):=\sup_{t\in [0,T]}\pa{\norm{J_\delta(X_\delta(t,x))}_E+\norm{X_\delta(t,x)}_E+\norm{X(t,x)}_E}<+\infty
\end{align}
and $K(T,x)$ is independent of $\delta$.
So by the local Lipschitzianity of $F$ (Hypotheses \ref{EU2}(iii)) there exists $L:=L(x,T)>0$ such that
\begin{align}
\norm{F_\delta(X_\delta(t,x))-F(X(t,x))}_E&=\norm{F(J_\delta(X_\delta(t,x)))-F(X(t,x))}_E\notag\\
&\leq L\norm{J_\delta(X_\delta(t,x))-X(t,x)}_E\notag\\
&\leq L\norm{J_\delta(X_\delta(t,x))-X_\delta(t,x)}_E+L\norm{X_\delta(t,x)-X(t,x)}_E.\label{papetee}
\end{align}
By \eqref{cji}, \eqref{findelta} and \eqref{papetee} we can conclude that
\begin{align}\label{boh}
\norm{F_\delta(X_\delta(t,x))-F(X(t,x))}_E&\leq \delta M'+L\norm{X_\delta(t,x)-X(t,x)}_E.
\end{align}
for some positive $M'=M'(K, M,L, m, \zeta_F)$.
Thus, by using the definition of mild solution and estimate \eqref{boh} we obtain
\begin{align*}
\norm{X_\delta(t,x)-X(t,x)}_E\leq \delta M_0M'\int^t_0e^{(t-s)\eta_0}ds+ M_0L\int^t_0e^{(t-s)\eta_0}\norm{X_\delta(t,x)-X(t,x)}_Eds.
\end{align*}
Applying the Gronwall lemma we complete the proof.
\end{proof}

As announced we need an additional assumption of dissipativity on $F$.

\begin{hyp}\label{Rick}It holds that:
\begin{enumerate}[\rm(i)]
\item  $F(\Dom(F)\cap H_{C})\subseteq H_{C}$ and $F_{|_{H_C}}-\zeta_F\Id_{H_C}$ is $m$-dissipative $(\zeta_F$ is the constant appearing in Hypothesis \ref{EU2}(iii)$)$.%, where $F_{|_{H_C}}:\Dom(F_{|_{H_C}})=\Dom(F)\cap H_{C}\subseteq H_C\ra H_C$ and .
\item
$A$ and $C$ are simultaneously diagonalizable and exists a basis $\{e_k\}_{k\in\N}$ of $H_C$ consisting of eigenvectors of $A$ and $C$.
\end{enumerate}
\end{hyp}

\begin{rmk}\label{diss-rem}{\rm
We need to point out two observations that we will use in this section.
\begin{enumerate}[\rm(i)]
\item
Arguing as in Lemma \ref{Lemma_YO} it is easy to prove that under Hypotheses \ref{EU2}(iii) and \ref{Rick}(i), the function $(F_{|H_C})_\delta-\zeta_F\Id_{|H_C}$ is dissipative as a function from $H_C$ to $H_C$ for any $\delta \in (0, \overline{\zeta}_F)$.
\item If Hypothesis \ref{Rick}(ii) holds true, then $H_C$ is dense in $\X$
and Hypotheses \ref{xn} are immediately satisfied with $\X_n:=\mbox{span}\{e_1,...,e_n\}$.
\end{enumerate}}
\end{rmk}

Under Hypotheses \ref{EU2}, \ref{Rick}, for any $\delta \in (0, \overline{\zeta}_F)$ and $n\in\N$ we let $\{P_{\delta,n}(t)\}
_{t\geq 0}$ be the transition semigroup associated to
\begin{equation}\label{SDE_ndelta}
\left\{\begin{array}{ll}
dX_{\delta,n}(t)=[A_nX_{\delta,n}(t)+(F_{\delta})_n(X_{\delta,n}(t))]dt+ C_n^{1/2}dW_n(t),&  t> 0;\\
X_{\delta,n}(0)=z\in \X_n.
\end{array}\right.
\end{equation}
where $A_n$, $C_n$ and $(F_{\delta})_n$ are defined in Section \ref{liphar}.

\begin{prop}\label{ROWA}
Under Hypotheses \ref{EU2} and \ref{Rick} the following estimate
\begin{align*}
\|\mathcal{D}_{C_n}P_{\delta,n}(t)f(x)\|_{C_n}\leq e^{t\zeta_{\X}}P_{\delta,n}(t)\|\mathcal{D}_{C_n}f(x)\|_{C_n}, \qquad\;\, x \in \X_n
\end{align*}
holds true for any $\delta \in (0, \overline{\zeta}_F)$, $n\in\N$, $t>0$, $f\in\mathcal{F}C^{1,n}_b(\X)$ and $\zeta_\X:=\zeta_A+\zeta_F$.
\end{prop}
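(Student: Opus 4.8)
The plan is to transcribe, for the finite-dimensional regularised system \eqref{SDE_ndelta}, the derivation of the gradient estimates \eqref{V1}--\eqref{OOC}, replacing the $\X$-dissipativity exploited there by a dissipativity estimate in the weighted inner product $[\cdot,\cdot]_{C_n}$ whose constant is exactly $\zeta_\X=\zeta_A+\zeta_F$. Since $F_\delta$ is globally Lipschitz by \eqref{lipdeltaX}, the triple $A_n$, $C_n$, $(F_\delta)_n$ falls within the setting of Section \ref{liphar}; arguing as in Proposition \ref{prodiff}, the map $x\mapsto X_{\delta,n}(t,x)$ is Gâteaux differentiable and $Y(t):=\J_GX_{\delta,n}(t,x)h$ solves
\begin{gather*}
\eqsys{
dY(t)=[A_n+\J(F_\delta)_n(X_{\delta,n}(t,x))]Y(t)\,dt, & t\in(0,T];\\
Y(0)=h\in\X_n.
}
\end{gather*}
The chain rule of Proposition \ref{chain}, whose proof carries over unchanged to this finite-dimensional situation, then yields for $f\in\mathcal{F}C^{1,n}_b(\X)$, $x\in\X_n$ and $h\in\X_n=H_{C_n}$ the identity $[\J_{C_n}P_{\delta,n}(t)f(x),h]_{C_n}=\E\big[[\J_{C_n}f(X_{\delta,n}(t,x)),\J_GX_{\delta,n}(t,x)h]_{C_n}\big]$, so that by the Cauchy--Schwarz inequality in $H_{C_n}$ the statement reduces to the derivative bound $\norm{\J_GX_{\delta,n}(t,x)h}_{C_n}\le e^{t\zeta_\X}\norm{h}_{C_n}$.

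I would obtain this bound by differentiating $t\mapsto\tfrac12\norm{Y(t)}_{C_n}^2$ along the variational equation,
\begin{align*}
\frac12\frac{d}{dt}\norm{Y(t)}_{C_n}^2=[A_nY(t),Y(t)]_{C_n}+[\J(F_\delta)_n(X_{\delta,n}(t,x))Y(t),Y(t)]_{C_n},
\end{align*}
and estimating the two terms by $\zeta_A\norm{Y(t)}_{C_n}^2$ and $\zeta_F\norm{Y(t)}_{C_n}^2$ respectively. For the first term I use Hypothesis \ref{Rick}(ii): as $A$ and $C$ share the eigenbasis $\{e_k\}$, on $\X_n$ the operators $A$ and $C^{-1/2}$ commute, whence $[A_ng,g]_{C_n}=\gen{AC^{-1/2}g,C^{-1/2}g}\le\zeta_A\norm{C^{-1/2}g}^2=\zeta_A\norm{g}_{C_n}^2$ for $g\in\X_n$, the inequality being the $\X$-dissipativity of $A-\zeta_A\Id_\X$ from Hypothesis \ref{EU2}(ii). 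For the second term I differentiate the dissipativity inequality of Remark \ref{diss-rem}(i): since $\{e_k\}$ is orthogonal in both $\X$ and $H_C$, the projection $\pi_n$ is simultaneously the $\X$- and the $H_C$-orthogonal projection onto $\X_n$, so that $[\cdot,\cdot]_{C_n}$ is the restriction of $[\cdot,\cdot]_C$ and $[\J(F_\delta)_n(z)g,g]_{C_n}=[\J F_\delta(z)g,g]_C\le\zeta_F\norm{g}_C^2$ for $z,g\in\X_n$. Summing the two bounds and invoking the Gronwall lemma gives $\norm{Y(t)}_{C_n}\le e^{t\zeta_\X}\norm{h}_{C_n}$, and hence the proposition.

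The delicate part is not the Gronwall step but the reduction of the two global hypotheses to the infinitesimal, projected estimates in $[\cdot,\cdot]_{C_n}$. Concretely, one must check that $\pi_n$ is also $H_C$-orthogonal (which is what makes the restriction of $[\cdot,\cdot]_C$ to $\X_n$ coincide with $[\cdot,\cdot]_{C_n}$ and lets the $\X$-dissipativity of $A$ pass to the weighted inner product), and that $(F_\delta)_{|H_C}$ coincides with the Yosida approximant of $F_{|H_C}$, so that Remark \ref{diss-rem}(i) genuinely applies to $\J(F_\delta)_n$. A minor additional point is the differentiability of $F_\delta$ needed to write the variational equation: this follows from the identity $F_\delta(x)=(\zeta_F+\delta^{-1})J_\delta(x)-\delta^{-1}x$, which reduces it to that of the resolvent $J_\delta$; should pointwise differentiability fail, one regularises $(F_\delta)_n$ by smooth Lipschitz maps, proves the estimate for each, and passes to the limit, exactly as in the approximation indicated in the proof of Proposition \ref{prodiff}.
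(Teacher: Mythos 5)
Your argument is correct in substance and rests on exactly the same key computation as the paper's --- dissipativity of $A_n$ and of $(F_\delta)_n$ measured in the weighted inner product $[\cdot,\cdot]_{C_n}$, made legitimate by the simultaneous diagonalisability of $A$ and $C$ (so that $A$ commutes with $C^{-1/2}$ on $\X_n$ and $\pi_n$ is orthogonal for both $\gen{\cdot,\cdot}$ and $[\cdot,\cdot]_C$), followed by Gronwall. The difference is where you apply it. The paper never linearises: it takes two solutions $X_{n,\delta}(t,x)$ and $X_{n,\delta}(t,y)$, differentiates $\tfrac12\norm{X_{n,\delta}(t,x)-X_{n,\delta}(t,y)}_{C_n}^2$ in $t$, applies the dissipativity inequalities to the \emph{difference} $F_\delta(X_{n,\delta}(t,x))-F_\delta(X_{n,\delta}(t,y))$ rather than to a Jacobian, obtains the contraction $\norm{X_{n,\delta}(t,x)-X_{n,\delta}(t,y)}_{C_n}\le e^{-t\zeta_\X}\norm{x-y}_{C_n}$, and then passes to the gradient of $P_{\delta,n}(t)f$ through difference quotients. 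You instead work with the variational equation for $Y(t)=\J_GX_{\delta,n}(t,x)h$ and the chain rule of Proposition \ref{chain}. This is cleaner at the level of the final gradient bound (the paper's limit of a product of two quotients is written somewhat loosely), but it forces you to differentiate $F_\delta$, and under Hypotheses \ref{EU2} the nonlinearity is only locally Lipschitz and $m$-dissipative, so $J_\delta$ --- hence $F_\delta$ --- need not be Fr\'echet differentiable anywhere; the inequality $[\J F_\delta(z)g,g]_C\le\zeta_F\norm{g}_C^2$ obtained by ``differentiating the dissipativity inequality'' is therefore not available as stated. You correctly flag this and propose mollifying $(F_\delta)_n$ by smooth Lipschitz maps; since convolution with a mollifier in the finite-dimensional space $(\X_n,[\cdot,\cdot]_{C_n})$ preserves the dissipativity constant, this patch does close the gap, but it is an extra layer of approximation that the paper's finite-difference argument avoids entirely. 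Your other two checkpoints --- that $\pi_n$ is also $H_C$-orthogonal, and that $(F_\delta)_{|_{H_C}}$ coincides with the Yosida approximant of $F_{|_{H_C}}$ (which follows from uniqueness of the resolvent of the $m$-dissipative map $F-\zeta_F\Id_\X$ in $\X$) --- are genuine and are needed in the paper's proof as well, where they are used tacitly.
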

\begin{proof}
We fix $n\in\N$, $\delta \in (0, \overline{\zeta}_F)$ and $x, y \in \X_n$ and let us consider $\{X_{n,\delta}(t,x)\}_{t\geq 0}$ and $\{X_{n,\delta}(t,y)\}_{t\geq 0}$ be the mild solutions of \eqref{SDE_ndelta} with initial datum $x$ and $y$ respectively. We assume that $\{X_{n,\delta}(t,x)\}_{t\geq 0}$ and $\{X_{n,\delta}(t,y)\}_{t\geq 0}$ are strict solutions of \eqref{SDE_ndelta}, otherwise we proceed as in \cite[Proposition 3.6]{BI1} or \cite[Proposition 6.2.2]{CER1} approximating them by means of a sequence of more regular processes. For any $t\geq 0$ we have
\begin{align*}
\frac{1}{2}\frac{d}{dt}\norm{X_{n,\delta}(t,x)-X_{n,\delta}(t,y)}^2_{C_n}&=\scal{A\left(X_{n,\delta}(t,x)-X_{n,\delta}(t,y)\right)}{X_{n,\delta}(t,x)-X_{n,\delta}(t,y)}_{C_n}\\
&+\scal{(F_\delta)_n(X_{n,\delta}(t,x))-(F_\delta)_n(X_{n,\delta}(t,y))}{X_{n,\delta}(t,x)-X_{n,\delta}(t,y)}_{C_n}.
\end{align*}
Since $\X_n$ is a finite dimensional space and the operators $A_n$ and $C_n$ are simultaneously diagonalizable, they commute. Thus, thanks to Hypotheses \ref{EU2} and \ref{Rick}(i) (see also Remark \ref{diss-rem}(i)), for any $t\geq 0$ we obtain
\begin{align*}
\frac{d}{dt}\norm{X_{n,\delta}(t,x)-X_{n,\delta}(t,y)}^2_{C_n}\leq -2\zeta_\X\norm{X_{n,\delta}(t,x)-X_{n,\delta}(t,y)}^2_{C_n},
\end{align*}
where $\zeta_\X=\zeta_A+\zeta_F$. By the Gronwall inequality, for any $t\geq 0$ and $x,y\in\X_n$ we obtain
\begin{align*}
\norm{X_{n,\delta}(t,x)-X_{n,\delta}(t,y)}_{C_n}\leq e^{-t\zeta_\X}\norm{x-y}_{C_n}.
\end{align*}
Therefore, for any $f\in\mathcal{F}C^{1,n}_b(\X)$, $t\geq 0$ and $x\in\X_n$ we have
\begin{align*}
\|\mathcal{D}_{C_n}P_{\delta,n}(t)f(x)\|_{C_n}&=\lim_{y\ra x}\dfrac{\abs{P_{\delta,n}(t)f(x)-P_{\delta,n}(t)f(y)}}{\norm{x-y}_{C_n}}\\
&=\lim_{y\ra x}\left(\dfrac{\abs{P_{\delta,n}(t)f(x)-P_{\delta,n}(t)f(y)}}{\norm{X(t,x)-Y(t,y)}_{C_n}}\right)\left(\dfrac{\norm{X(t,x)-Y(t,y)}_{C_n}}{\norm{x-y}_{C_n}}\right)\\
&= \lim_{y\ra x}\left(\dfrac{\abs{\E(f(X_{\delta,n}(t,x)))- \E(f(X_{\delta,n}(t,y)))}}{\norm{X(t,x)-Y(t,y)}_{C_n}}\right)\left(\dfrac{\norm{X(t,x)-Y(t,y)}_{C_n}}{\norm{x-y}_{C_n}}\right)\\
& \leq e^{-t\zeta_{\X}}P_{\delta,n}(t)\|\mathcal{D}_{C_n}f(x)\|_{C_n}
\end{align*}
whence the claim.
\end{proof}

\begin{thm}\label{Harnak_diss}
 Assume that Hypotheses \ref{EU2} and \ref{Rick} hold true. For any $\varphi\in B_b(\X)$,  $t>0$, $x\in\X$ and $p>1$ it holds
\begin{align}\label{dae}
|P(t)\varphi(x+h)|^p\leq P(t)|\varphi(x)|^p\,{\rm exp}\pa{\frac{pe^{2t\zeta_{\X}}}{t(p-1)}\|h\|_C^2},\qquad h\in H_C\cap E.
\end{align}
\end{thm}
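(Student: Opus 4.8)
The plan is to reproduce the interpolation argument of Theorem \ref{lip_harnak} at the level of the finite-dimensional Yosida semigroups $P_{\delta,n}$ associated with \eqref{SDE_ndelta}, using the gradient estimate of Proposition \ref{ROWA} in place of \eqref{OOC}, and then to remove the two regularisations by an inner limit $n\to\infty$ and an outer Yosida limit $\delta\to0$. The whole conceptual content is inherited from Section \ref{liphar}; the only genuinely new ingredient is that the gradient constant is now the time-dependent factor $e^{t\zeta_\X}$ rather than the constant $K_1$, and this is what turns the exponent into $e^{2t\zeta_\X}$.

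So I would first fix $\delta\in(0,\overline{\zeta}_F)$, integers $n>n_0$, points $x,h\in\X_{n_0}$, and a function $f\in\mathcal{F}C^2_b(\X)$ with $f\ge\eps>0$. Writing $g(r,\cdot):=P_{\delta,n}(r)f$ and
\[
G(r):=\pa{P_{\delta,n}(t-r)g^p(r,\cdot)}(x+rt^{-1}h),\qquad r\in[0,t],
\]
I would differentiate $r\mapsto\ln G(r)$ exactly as in the proof of Theorem \ref{lip_harnak}. The parabolic term produces the square-field contribution $-p(p-1)g^{p-2}\|C_n^{1/2}\J g\|^2$ (the analogue of \eqref{d-a}), while the translation term is bounded, via Cauchy--Schwarz in $H_{C_n}$ and Proposition \ref{ROWA} applied to $P_{\delta,n}(t-r)$, by $e^{(t-r)\zeta_\X}\|h\|_{C_n}P_{\delta,n}(t-r)[pg^{p-1}\|C_n^{1/2}\J g\|]$. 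Estimating the resulting quadratic in $v:=g^{-1}\|C_n^{1/2}\J g\|$ by the elementary inequality used in Theorem \ref{lip_harnak}, using the positivity of $P_{\delta,n}$ and the normalisation $G(r)^{-1}P_{\delta,n}(t-r)[g^p(r,\cdot)]=1$, and bounding $e^{2(t-r)\zeta_\X}\le e^{2t\zeta_\X}$ for $r\in[0,t]$, I obtain
\[
\frac{d}{dr}\ln G(r)\le\frac{p\,e^{2t\zeta_\X}}{t^2(p-1)}\|h\|_{C_n}^2 .
\]
Integrating on $[0,t]$ and recalling $G(0)=P_{\delta,n}(t)f^p(x)$ and $G(t)=(P_{\delta,n}(t)f(x+h))^p$ yields \eqref{dae} for $P_{\delta,n}$ and positive $f$; replacing $f$ by $|f|$ drops the positivity. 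The decisive feature is that the constant $p\,e^{2t\zeta_\X}/(t(p-1))$ does not depend on $\delta$ or $n$.

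The second stage is the passage to the limit. Keeping $\delta$ fixed and letting $n\to\infty$, I would apply the finite-dimensional approximation of Proposition \ref{fin_approx}, whose proof carries over to \eqref{SDE_ndelta} since $F_\delta$ is globally Lipschitz by \eqref{lipdeltaX} and dissipative up to $\zeta_F$, together with $\|h\|_{C_n}\to\|h\|_C$; this gives \eqref{dae} for the Yosida semigroup $P_\delta$, with $f\in\mathcal{F}C^2_b(\X)$ and $x,h\in\bigcup_i\X_i$. Letting then $\delta\to0$ I would use the convergence \eqref{convE1}, which requires $x$ and $x+h$ to belong to $E$: since $h\in E$ this reduces to $x\in E$, and it is precisely this step that produces the restriction $h\in H_C\cap E$ in the statement. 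Finally a standard double approximation in the function argument (from $\mathcal{F}C^2_b(\X)$ to $C_b(\X)$, then to $B_b(\X)$) together with a continuity argument in $x$ based on \eqref{lipdete} extends the estimate to every $\varphi\in B_b(\X)$ and every $x\in\X$.

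The main obstacle is not the differential inequality, which is essentially a transcription of Theorem \ref{lip_harnak}, but the bookkeeping in this last stage: one has to approximate a given $x\in\X$ and $h\in H_C\cap E$ by elements of $\bigcup_i\X_i$ simultaneously in the $\X$-topology (for the continuity of $P(t)\varphi$) and in $\|\cdot\|_C$ (for the exponent), while keeping the shifted points inside $E$ so that \eqref{convE1} applies, and one has to justify the interchange of the limits $n\to\infty$ and $\delta\to0$ with the expectation. Both rest on the $\delta$-uniform $E$-bounds \eqref{patatine} and \eqref{stindEzY} and on the properties of the Yosida approximants collected in Lemma \ref{Lemma_YO}.
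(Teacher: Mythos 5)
Your proposal follows the paper's proof essentially verbatim: the paper likewise runs the interpolation argument of Theorem \ref{lip_harnak} on the doubly approximated semigroups $P_{\delta,n}$ with Proposition \ref{ROWA} replacing \eqref{OOC}, passes to the limit $n\to\infty$ via Proposition \ref{fin_approx}, then $\delta\to0$ via \eqref{convE1} (which is exactly where the restriction $h\in H_C\cap E$ enters), and finally extends to all $x\in\X$ by density of $E$ with the continuity of $P(t)\varphi$ and to $\varphi\in B_b(\X)$ by approximation. The only point worth noting is that your bound $e^{2(t-r)\zeta_\X}\le e^{2t\zeta_\X}$ presupposes $\zeta_\X\ge0$, but this is precisely the implicit estimate the paper itself uses to arrive at the stated constant, so it is not a deviation from the published argument.
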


\begin{proof}
Using Proposition \ref{ROWA} and arguing exactly as in the proof of Theorem \ref{lip_harnak} we obtain
%For every $\delta>0$ we consider the function $F_\delta$ of $F$ introduced in \eqref{Fdelta}. We recall that $(F_\delta)_{|_E}$ is Lipschitz continuous (Hypotheses \ref{HAR}\eqref{red}) and $(F_\delta)_{|_{H_C}}-\zeta_F {\rm Id}_{H_C}$ is dissipative (Lemma \ref{Lemma_YO}). By the same arguments of the proof of Theorem \ref{lip_harnak} and Lemma \ref{Lemma_ROWA} we get for every $f\in C_b(\X)$, $\delta>0$, $x\in E$, $h\in H_{C}\cap E$ and $t>0$
\begin{align}\label{pdelta}
|P_\delta(t)\varphi(x+h)|^p\leq P_\delta(t)|\varphi(x)|^p\,{\rm exp}\pa{\frac{pe^{2t\zeta_{\X}}}{t(p-1)}\|h\|_C^2}.
\end{align}
for any $\delta \in (0,\overline{\zeta}_F)$, $p>1$, $t>0$, $\varphi \in C_b(\X)$, $x \in X$ and $h \in H_C$.
Now, using \eqref{convE1} and letting $\delta \to 0$ in \eqref{pdelta} we get that
\begin{align}\label{pt}
|P(t)\varphi(x+h)|^p\leq P(t)|\varphi(x)|^p\,{\rm exp}\pa{\frac{pe^{2t\zeta_{\X}}}{t(p-1)}\|h\|_C^2}.
\end{align}
for any $p>1$, $t>0$, $\varphi \in C_b(\X)$, $x \in E$ and $h \in H_C\cap E$. Using the fact that $E$ is densely embedded in $\X$ and the continuity of $P(t)\varphi$ we can extend estimate \eqref{pt} for any $x \in \X$. Finally, using the monotone class theorem as in the proof of Theorem \ref{lip_harnak} we complete the proof.
\end{proof}

\begin{rmk}{\rm
We point out that if $H_C\cap E$ is dense in $H_C$, then \eqref{dae} will hold true for any $h \in H_C$.}
\end{rmk}

\section{Some consequences and examples}\label{sect_examples}

In the first part of this section we collect some corollaries of Theorems \ref{lip_harnak} and \ref{Harnak_diss}. Finally,
we exhibit some classes of examples to which all of our results can be applied.

We start by stating and proving some classical consequences of the (LHI) for which we refer to \cite[Corollary 1.2]{RO-WA1} and \cite[Section 1.3.1]{WANG13}. In the sequel $\mu$ will denote an invariant measure associated with $P(t)$, that is a Borel probability measure on $\X$ such that
\begin{equation*}
\int_{\X} P(t)f  d\mu= \int_{\X} f d\mu, \qquad t>0, \ f \in B_b(\X).
\end{equation*}
Sufficient conditions that guarantee the existence of such a measure can be found in \cite[Chapter 8]{CER1}, \cite[Chapter 6]{DA-ZA3} and \cite[Chapter 11]{DA-ZA3}.
In this case $P(t)$ extends to a positive contractive semigroup in $L^p(\X, \mu)=:L^p_\mu$, the $L^p$ spaces related to the measure $\mu$, (see \cite[Theorem 2.14]{OUH05}).
For simplicity we write $\mu(f)$ to denote $\int_{\X}fd\mu$.
\begin{cor}\label{cor-cons}
Assume that Hypotheses \ref{EU2} and \ref{Rick} hold true.
\begin{enumerate}[\rm (i)]
\item \label{cor_log} For any positive $f\in B_b(\X)$,  $t>0$, $x\in\X$ and $h\in H_C\cap E$
\begin{align}\label{log_version}
[P(t)(\ln f)](x+h)\leq \ln P(t)f(x)+\frac{e^{2t\zeta_{\X}}}{t}\|h\|_C^2.
\end{align}

\item \label{cor_strFeller} For every $f\in B_b(\X)$ and $x\in\X$ it holds
\begin{align}\label{strFeller_property}
\lim_{\substack{\norm{h}_C\ra 0\\ h\in H_C\cap E}}P(t)f(x+h)=P(t)f(x).
\end{align}

\item \label{cor_Wasserstein} The following entropy-cost inequality holds true
\begin{align*}
\mu((P^*(t)f)\ln(P^*(t)f))\leq \frac{e^{2t\zeta_{\X}}}{t}W(f\mu,\mu)^2,
\end{align*}
for any positive function $f\in L^2_\mu$ such that $\mu(f)=1$.  Here $\{P^*(t)\}_{t\geq 0}$ is the adjoint semigroup of $\{P(t)\}_{t\geq 0}$ in $L^2_\mu$ and $W$ denotes the $L^2$-Wasserstein distance with respect to the cost function $(x,y)\mapsto\norm{x-y}_C$, namely for any two probability measure $\mu_1,\mu_2$ on $\X$
\begin{align*}
W(\mu_1,\mu_2)^2:=\inf\set{\int_{\X\times\X}\norm{x-y}_C^2\pi(dx,dy)\tc \pi\in \mathscr{C}(\mu_1,\mu_2)},
\end{align*}
where $\mathscr{C}(\mu_1,\mu_2)$ is the set of all the couplings of $\mu_1$ and $\mu_2$ and we let $\norm{x-y}_C=+\infty$, if $x-y$ does not belong to $H_C\cap E$.
\end{enumerate}
\end{cor}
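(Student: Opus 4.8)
The plan is to obtain all three statements as consequences of the logarithmic Harnack inequality \eqref{dae} of Theorem \ref{Harnak_diss}, which under Hypotheses \ref{EU2} and \ref{Rick} reads $|P(t)\varphi(x+h)|^p\le P(t)|\varphi|^p(x)\,\exp\!\pa{\tfrac{p\,e^{2t\zeta_\X}}{t(p-1)}\|h\|_C^2}$ for $h\in H_C\cap E$ and $p>1$. Throughout set $c(t):=e^{2t\zeta_\X}/t$.

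First I would prove \eqref{log_version}. Fix a positive $f\in B_b(\X)$; replacing $f$ by $f\vee\eps$ and letting $\eps\to 0^+$ at the end (both sides converge by monotone convergence), I may assume $f$ bounded away from $0$. Applying \eqref{dae} to $\varphi:=f^{1/p}$, so that $|\varphi|^p=f$, and taking logarithms gives
\[ p\,\log P(t)f^{1/p}(x+h)\le \log P(t)f(x)+\frac{p}{p-1}\,c(t)\,\|h\|_C^2. \]
The key analytic step is the limit $p\to+\infty$: since $0<\eps\le f(X(t,x+h))\le\|f\|_\infty$, the random quantity $p\,(f(X(t,x+h))^{1/p}-1)$ is bounded and converges pointwise to $\log f(X(t,x+h))$, so by dominated convergence $P(t)f^{1/p}(x+h)=1+\tfrac1p P(t)(\log f)(x+h)+o(1/p)$ and hence $p\log P(t)f^{1/p}(x+h)\to P(t)(\log f)(x+h)$, while $\tfrac{p}{p-1}c(t)\to c(t)$. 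This yields \eqref{log_version} for $h\in H_C\cap E$.

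For the strong Feller property (ii), I would feed $f=e^{\lambda g}$ with $g\in B_b(\X)$ and $\lambda>0$ into \eqref{log_version}, obtaining
\[ P(t)g(x+h)\le \frac1\lambda\log P(t)e^{\lambda g}(x)+\frac{c(t)}{\lambda}\,\|h\|_C^2. \]
Letting $\|h\|_C\to 0$ along $h\in H_C\cap E$ and then $\lambda\to 0^+$, and using that $P(t)e^{\lambda g}(x)=1+\lambda P(t)g(x)+O(\lambda^2)$ by boundedness of $g$, gives $\limsup_{\|h\|_C\to0}P(t)g(x+h)\le P(t)g(x)$; applying the same bound to $-g$ produces the matching $\liminf$, whence \eqref{strFeller_property}. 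Finally, for the entropy-cost inequality (iii), I would combine \eqref{log_version} with the invariance of $\mu$ and the dual (Gibbs) representation of the relative entropy. Writing $\rho:=P^*(t)f$, one has $\mu(\rho)=\mu(f\,P(t)\one)=\mu(f)=1$, and the variational identity $\mu(\rho\log\rho)=\sup\set{\mu(\rho\log g)\tc g>0,\ \mu(g)=1}$ (the supremum being attained at $g=\rho$) reduces the problem to estimating $\mu(\rho\log g)=\mu\pa{f\,P(t)(\log g)}$ for admissible $g$, which I take bounded and bounded away from $0$. Fix any coupling $\pi\in\mathscr{C}(f\mu,\mu)$ of finite cost, so that $x-y\in H_C\cap E$ for $\pi$-a.e.\ $(x,y)$; applying \eqref{log_version} with base point $y$ and increment $h=x-y$ and integrating against $\pi$ gives
\[ \mu\pa{f\,P(t)(\log g)}\le \int_{\X}\log P(t)g\,d\mu+c(t)\int_{\X\times\X}\|x-y\|_C^2\,\pi(dx,dy). \]
By Jensen's inequality and invariance, $\int_\X\log P(t)g\,d\mu\le\log\mu(P(t)g)=\log\mu(g)=0$, so the first term drops out; taking the supremum over $g$ and then the infimum over $\pi$ yields $\mu(\rho\log\rho)\le c(t)\,W(f\mu,\mu)^2$.

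The routine passages (dominated and monotone convergence, Jensen, and the Young inequality behind the entropy identity) are standard; the point to handle with care is that \eqref{log_version} is only available for increments in $H_C\cap E$. This is precisely why in (ii) one must restrict $\|h\|_C\to 0$ along $H_C\cap E$, and why in (iii) the cost function is set to $+\infty$ off $H_C\cap E$, so that only couplings supported on $\set{x-y\in H_C\cap E}$ enter the infimum and the logarithmic Harnack estimate is legitimately applicable under the integral sign.
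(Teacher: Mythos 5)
Your proposal is correct and, for parts (i) and (ii), follows essentially the paper's route: the paper simply cites Wang's monograph for \eqref{log_version} (your $p\to+\infty$ limit of \eqref{dae} applied to $\varphi=f^{1/p}$, after truncating $f$ away from zero, is exactly that standard derivation), and for (ii) the paper linearizes with $f_\eps=1+\eps f$ together with the elementary bounds $r\le\ln(1+r)+r^2$ and $\ln(1+r)\le r$, where you linearize with $e^{\lambda g}$ --- the same idea in slightly different clothing, and your variant even dispenses with the need to pass from $h$ to $-h$. The only genuine divergence is in (iii): the paper substitutes $P^*(t)f$ directly into \eqref{log_version} (its inequality \eqref{ARC}), integrates against a coupling $\pi\in\mathscr{C}(f\mu,\mu)$ and concludes by Jensen and invariance, first for bounded positive $f$ and then by ``a standard argument'' for general $f\in L^2_\mu$; you instead invoke the Gibbs/Donsker--Varadhan variational formula $\mu(\rho\ln\rho)=\sup_g\mu(\rho\ln g)$ over bounded test densities bounded away from zero, and apply \eqref{log_version} only to such $g$. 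Both are legitimate, and the coupling/Jensen/invariance endgame is identical; your detour through the variational formula costs one extra (standard) lemma but buys a cleaner justification, since it avoids having to check that $P^*(t)f$ --- which is defined only $\mu$-a.e.\ and need not be bounded or bounded away from zero --- is an admissible argument for \eqref{log_version}.
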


\begin{proof}
A proof of \eqref{cor_log} can be found in \cite[Section 1.3.1]{WANG13}. We now prove \eqref{cor_strFeller}. It suffices to prove \eqref{strFeller_property} for a non-negative function $f\in B_b(\X)$. Indeed the general case can be obtained writing $f=f^+-f^-$, being $f^{+}$ and $f^-$ the positive and the negative part of $f$. So, let us fix a non-negative function $f$ and for any $\eps>0$ we set $f_\eps:=1+\eps f$. Recalling that $r\leq \ln(1+r)+r^2$ for any $r\geq 0$ we get for every $x\in\X$
\begin{align}\label{YU}
\ln f_\eps(x)=\ln(1+\eps f(x))\geq \eps f(x)-\eps^2f^2(x)\geq \eps f(x)-\eps^2\norm{f}_\infty^2.
\end{align}
Now applying \eqref{log_version} to $f_\eps$, using \eqref{YU} and dividing by $\eps$ we get for every $x\in\X$ and $h\in H_C\cap E$
\begin{align}\label{GI}
P(t)f(x+h)-\eps\norm{f}_\infty^2\leq \frac{1}{\eps}\ln P(t)(1+\eps f(x))+\frac{e^{2t\zeta_{\X}}}{\eps t}\|h\|_C^2.
\end{align}
Taking the supremum limit as $\norm{h}_C\ra 0$ with $h\in H_C\cap E$ and then letting $\eps\to 0$ we get
\begin{align*}
\limsup_{\substack{\norm{h}_C\ra 0\\ h\in H_C\cap E}} P(t)f(x+h)\leq P(t)f(x).
\end{align*}
Recalling that $\ln(1+r)\leq r$ for any $r>-1$ and arguing as above we get that for any $\eps>0$, $x\in\X$ and $h\in H_C\cap E$
\begin{align*}
P(t)\pa{\frac{1+\eps f(x)}{\eps}}-\frac{e^{2t\zeta_{\X}}}{\eps t}\|h\|_C^2\leq \frac{1}{\eps}\ln P(t)(1+\eps f(x-h))\leq P(t)f(x-h).
\end{align*}
Taking the infimum limit as $\norm{h}_C\ra 0$ with $h\in H_C\cap E$ and then letting $\eps\to 0$ we get
\begin{align}\label{OH}
P(t)f(x)\leq \liminf_{\substack{\norm{h}_C\ra 0\\ h\in H_C\cap E}} P(t)f(x-h).
\end{align}
Since $H_C\cap E$ is a linear space then applying \eqref{OH} to $-h$ we get
\begin{align}\label{OH*}
P(t)f(x)\leq \liminf_{\substack{\norm{h}_C\ra 0\\ h\in H_C\cap E}} P(t)f(x+h).
\end{align}
By \eqref{GI} and \eqref{OH*} we get \eqref{strFeller_property}.

Now a standard argument allows us to prove \eqref{cor_Wasserstein} for a bounded Borel and positive function $f$ with $\mu(f)=1$. Writing \eqref{log_version} with $P^*(t)f$ in place of $f$, we get
\begin{align}\label{ARC}
[(P(t)f)(\ln P^*(t)f)](x)\leq \ln (P(t)P^*(t)f(y))+\frac{e^{2t\zeta_{\X}}}{t}\|x-y\|_C^2
\end{align}
for any $t>0$, $x,y\in\X$ such that $x-y\in H_C\cap E$.
Integrating both sides of \eqref{ARC} with respect to $\pi\in\mathscr{C}(f\mu,\mu)$ we get
\begin{align*}
\mu((P^*(t)f)(\ln P^*(t)f))\leq \mu(\ln P(t)P^*(t)f)+\frac{e^{2t\zeta_{\X}}}{t}\int_{\X\times \X}\|x-y\|_C^2\pi(dx,dy).
\end{align*}
To conclude it is sufficient to observe that the Jensen inequality yields that
\begin{gather*}
\mu(\ln P(t)P^*(t)f)\leq \ln\mu(P(t)P^*(t)f)=\ln\mu(f)=0,
\end{gather*}
whence the claim.
\end{proof}
\begin{rmk}{\rm We stress that Corollary \ref{cor-cons} remains true (with the constant $e^{2t\zeta_\X}$ replaced by $K^2_1$) if we assume that the hypotheses of Theorem \ref{lip_harnak} hold true.}
\end{rmk}

Another classical consequence of (LHI) is a hypercontractivity type estimate for the semigroup $P(t)$ in $L^p_\mu$. Such estimate relies on the H\"older inequality and some integrability conditions with respect to $\mu$ of some exponential functions.

\begin{cor}\label{cor-hyp}
Assume that the hypotheses of Theorem \ref{lip_harnak} hold true and that there exists an invariant measure $\mu$ for the semigroup $P(t)$.
If, in addition there exists $\varepsilon>0$ such that
\begin{equation}\label{l_a}
\int_{\X}\int_{\X} e^{\varepsilon \|x-y\|^2_C}\mu(dx)\mu(dy)<+\infty,
\end{equation}
then, for any $p\ge 2$, there exists $t_0>0$ and a positive constant $C$ such that
\begin{equation}\label{hyp_est}
\|P(t)f\|_{L^p_\mu}\le C \|f\|_{L^2_\mu}
\end{equation}
for any $t \ge t_0$ and any $f\in L^2_\mu$.
\end{cor}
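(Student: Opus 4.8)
The plan is to prove the estimate at a single, suitably large time $t_0$ and then propagate it to all $t\ge t_0$ by the semigroup law. Since $P(t)$ is a contraction on $L^p_\mu$, once $\|P(t_0)f\|_{L^p_\mu}\le C\|f\|_{L^2_\mu}$ is known, writing $P(t)=P(t-t_0)P(t_0)$ immediately gives $\|P(t)f\|_{L^p_\mu}\le\|P(t_0)f\|_{L^p_\mu}\le C\|f\|_{L^2_\mu}$ for every $t\ge t_0$, with the same constant. By positivity of $P(t)$ and the bound $|P(t)f|\le P(t)|f|$ it suffices to treat $f\ge0$, and by the monotone approximation $f_n=\min(f,n)$ together with Fatou's lemma it is enough to prove the inequality for bounded nonnegative $f$, for which \eqref{lip_harnak_est} applies directly.

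For bounded $f\ge0$ I would first recast \eqref{lip_harnak_est} with exponent $2$ as a two-point comparison. Setting $z=x+h$ and $w=x$, so that $h=z-w$ and $\|h\|_C=\|z-w\|_C$, it reads
\begin{equation*}
(P(t_0)f(z))^2\leq (P(t_0)f^2)(w)\,\exp\!\pa{\frac{2K_1^2}{t_0}\|z-w\|_C^2},\qquad z-w\in H_C,
\end{equation*}
the inequality being trivial when $z-w\notin H_C$. Fixing $z$, moving the exponential to the left and integrating in $w$ against $\mu$, the invariance of $\mu$ gives $\int_\X(P(t_0)f^2)\,d\mu=\mu(f^2)=\|f\|_{L^2_\mu}^2$, whence the pointwise bound
\begin{equation*}
(P(t_0)f(z))^2\,H(z)\leq\|f\|_{L^2_\mu}^2,\qquad H(z):=\int_\X\exp\!\pa{-\frac{2K_1^2}{t_0}\|z-w\|_C^2}\mu(dw).
\end{equation*}

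Next I would raise this to the power $p/2$, integrate in $z$ against $\mu$, and control the resulting weight $\int_\X H(z)^{-p/2}\mu(dz)$. Using the convexity of $u\mapsto u^{-p/2}$ on $(0,+\infty)$ and Jensen's inequality inside the definition of $H$,
\begin{equation*}
\int_\X H(z)^{-p/2}\mu(dz)\leq\int_\X\!\int_\X\exp\!\pa{\frac{pK_1^2}{t_0}\|z-w\|_C^2}\mu(dw)\mu(dz).
\end{equation*}
Choosing $t_0:=pK_1^2/\varepsilon$ makes the constant in the exponent equal to $\varepsilon$, so \eqref{l_a} bounds the double integral by a finite $M$; combined with the previous display this yields $\|P(t_0)f\|_{L^p_\mu}^p\le M\|f\|_{L^2_\mu}^p$, i.e. the claim with $C=M^{1/p}$.

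The delicate point is exactly this last weight estimate. A priori $\|z-w\|_C=+\infty$ whenever $z-w\notin H_C\cap E$, so one must ensure both that $H(z)>0$ for $\mu$-a.e.\ $z$ and that the Gaussian-type weight is $\mu\otimes\mu$-integrable; otherwise the pointwise bound is vacuous. Both facts are supplied precisely by \eqref{l_a}: its finiteness forces $\|z-w\|_C<+\infty$ for $\mu\otimes\mu$-a.e.\ pair, hence, by Fubini, $H(z)>0$ for $\mu$-a.e.\ $z$, and it furnishes the integrability needed after the Jensen step once $t_0$ is taken large enough that $pK_1^2/t_0\le\varepsilon$. The remaining ingredients—the approximation extending the inequality from bounded nonnegative functions to all of $L^2_\mu$, and the propagation to $t\ge t_0$ via $L^p_\mu$-contractivity—are routine.
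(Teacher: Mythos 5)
Your proof is correct, but it follows a genuinely different route from the paper's. The paper applies the $p=2$ Harnack inequality raised to a power $\vartheta/2$ with $\vartheta\in(1,2)$, splits the resulting double integral by the H\"older inequality with exponents $2/\vartheta$ and $2/(2-\vartheta)$, and uses invariance and $L^2_\mu$-contractivity to conclude that $P(t)$ maps $L^2_\mu$ into $L^{2\vartheta}_\mu$ for $t$ large; since $\vartheta<2$ this single step cannot reach an arbitrary $p$, so the paper then iterates via the semigroup law (and the Jensen inequality applied to $P(t)$) to climb from $L^{2\vartheta}_\mu$ to $L^{2\vartheta^2}_\mu$ and so on. You instead run the by-now classical R\"ockner--Wang argument: divide the Harnack inequality by the exponential, integrate in the \emph{reference} variable to get the pointwise bound $(P(t_0)f(z))^2\,H(z)\le\|f\|^2_{L^2_\mu}$ with $H(z)=\int_{\X}e^{-2K_1^2t_0^{-1}\|z-w\|_C^2}\mu(dw)$, and then control $\int_{\X}H^{-p/2}d\mu$ by Jensen's inequality applied to the convex map $u\mapsto u^{-p/2}$, which lands exactly on the integrability hypothesis \eqref{l_a} once $t_0\ge pK_1^2/\varepsilon$. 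This reaches any $p\ge 2$ in one shot, with the explicit choices $t_0=pK_1^2/\varepsilon$ and $C=M^{1/p}$, $M$ being the double integral in \eqref{l_a}; the paper's iteration is longer but needs only the H\"older inequality and never inverts the weight, so it sidesteps the (minor, and correctly handled by you via Fubini) issue of ensuring $H(z)>0$ $\mu$-a.e. Your explicit reductions --- to bounded nonnegative $f$ by truncation and Fatou, and to a single time $t_0$ by $L^p_\mu$-contractivity --- are points the paper leaves implicit, and they are welcome. Both arguments produce a threshold time $t_0$ that grows with $p$.
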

\begin{proof}
Let us consider $f \in L^2_\mu$ and $\vartheta \in (1,2)$. By \eqref{lip_harnak_est} we deduce that for any $t>0$
\begin{align*}
\int_{\X}|P(t)f|^{2\vartheta}(x)\mu(dx)=&\int_{\X}\int_{\X}|P(t)f|^{2\vartheta}(x)\mu(dx)\mu(dy)\\
=& \int_{\X}\int_{\X}|P(t)f(x)|^{\vartheta}(|P(t)f(x)|^2)^{\vartheta/2}\mu(dx)\mu(dy)\\
\le & \int_{\X}\int_{\X}|P(t)f(x)|^{\vartheta}(P(t)f^2(y))^{\vartheta/2}e^{\frac{\vartheta K_1^2}{t}\|x-y\|^2_C}\mu(dx)\mu(dy)\\
=:& \int_{\X}\int_{\X}h(x,y)g(x,y)\mu(dx)\mu(dy),
\end{align*}
where $h(x,y):=|P(t)f(x)|^{\vartheta}(P(t)f^2(y))^{\vartheta/2}$ and $g(x,y):=e^{\frac{\vartheta K_1^2}{t}\|x-y\|^2_C}$. Applying the H\"older inequality with respect to the measure $\mu\otimes \mu$ we get
\[\int_{\X}\int_{\X}h(x,y)g(x,y)\mu(dx)\mu(dy)\le \|h\|_{L^{2/\vartheta}_{\mu\otimes \mu}}\|g\|_{L^{2/(2-\vartheta)}_{\mu\otimes \mu}}.\]
Now, the invariance of $\mu$ and the contractivity of $P(t)$ in $L^2_\mu$ allow us to estimate
\begin{align*}
\|h\|_{L^{2/\vartheta}_{\mu\otimes \mu}}= \|P(t)f\|_{L^2_\mu}^\vartheta \|f\|_{L^2_\mu}^\vartheta
\le \|f\|_{L^2_\mu}^{2\vartheta}.
\end{align*}
Moreover, being
\begin{align*}
\|g\|_{L^{2/(2-\vartheta)}_{\mu\otimes \mu}}^{2/(2-\vartheta)}= \int_{\X}\int_{\X}e^{\frac{2\vartheta}{2-\vartheta}\frac{K_1^2}{t}\|x-y\|^2_C}\mu(dx)\mu(dy)=:C(\vartheta,t)
\end{align*}
condition \eqref{l_a} ensures that there exists $\overline{t}>0$ such that $C(\vartheta,t)<+\infty$ for any $t \ge \overline{t}$ and any $\vartheta \in (1,2)$. Consequently
\begin{align*}
\|P(t)f\|_{L^{2\vartheta}_\mu}\le (C(\vartheta,t))^{\frac{2-\vartheta}{4\vartheta}}\|f\|_{L^2_\mu}.
\end{align*}
i.e., $P(t)$ maps $L^2_\mu$ into $L^{2\vartheta}_\mu$ for $t\ge \overline{t}$. Since $\vartheta>1$, $P(t)$ actually improves summability of the initial datum when $t\ge\overline{t}$. To go further we use the semigroup law. Indeed, if $f \in L^2_\mu$, then $P(\overline{t})f \in L^{2\vartheta}_\mu$, i.e. $|P(\overline{t})f|^{\vartheta}\in L^2_\mu$. Using again the first part of the proof, we deduce that
$P(t)|P(\overline{t})f|^{\vartheta}\in L^{2\vartheta}_\mu$ for $t \ge \overline{t}$. Since, by the Jensen inequality and the positivity of $P(t)$ we can estimate
\begin{align*}
+\infty>\|P(t)|P(\overline{t})f|^{\vartheta}\|_{L^{2\vartheta}_\mu}^{2\vartheta}=&\int_{\X}|P(t)|P(\overline{t})f|^{\vartheta}|^{2\vartheta}d\mu\\
\ge & \int_{\X}|P(t)|P(\overline{t})f||^{2\vartheta^2}d\mu\\
= & \int_{\X}(P(t)|P(\overline{t})f|)^{2\vartheta^2}d\mu\\
\ge & \int_{\X}|P(t)P(\overline{t})f|^{2\vartheta^2}d\mu\\
= & \int_{\X}|P(t+\overline{t})f|^{2\vartheta^2}d\mu, \qquad\;\, t \ge \overline{t}
\end{align*}
we infer that $P(t)$ maps $L^2_\mu$ into $L^{2\vartheta^2}_\mu$ for any $t \ge 2\overline{t}$. Iterating this procedure we can prove that for any $p>2$ there exists $t_0=t_0(p)>0$ such that $P(t)$ maps $L^2_\mu$ into $L^p_\mu$ for any $t \ge t_0$ and estimate \eqref{hyp_est} holds true.
\end{proof}

\begin{rmk}{\rm Note that the result in Corollary \ref{cor-hyp} and \eqref{hyp_est} continue to hold true if we assume the hypotheses of Theorem \ref{Harnak_diss}, that $\zeta_{\X}<0$ and that \eqref{l_a} is satisfied with $\varepsilon=1$.}
\end{rmk}

Now we collect some examples of operators to which the results of Sections \ref{liphar} and \ref{nonome} and of Corollaries \ref{cor-cons} and \ref{cor-hyp} can be applied.

\begin{example}{\bf{A suitable choice for $A$ and $C$}.}\\\label{suitable}{\rm
Let $\X$ be a Hilbert space. For a positive and compact operator $Q\in \mathcal{L}(\X)$, we set
\[A:=-(1/2)Q^{-\beta}:Q^{\beta}(\X)\subseteq\X\ra\X,\qquad  C:=Q^{2\alpha},\]
with $\alpha,\beta\geq 0$ such that \eqref{condprato} is verified.

Let $\{e_k\}_{k \in\N}$ be an orthonormal basis of $\X$ consisting of eigenvectors of $Q$, and let $\{\lambda_k\}_{k\in\N}$ be the eigenvalues associated with $\{e_k\}_{k \in\N}$. Since $Q$ is a compact and positive operator, there exists $k_0\in\N$ such that  $0<\lambda_k\leq \lambda_{k_0}$, for any $k\in\N$. Without loss of generality we can assume that $k_0=1$. Hence, for any $x\in Q^{\beta}(\X)$
\begin{equation}\label{disAX}
\scal{Ax}{x}=-\frac{1}{2}\sum_{k=1}^{+\infty}\lambda^{-\beta}_k \scal{x}{e_k}^2\leq -\frac{1}{2}\lambda^{-\beta}_1\norm{x}^2.
\end{equation}
Moreover, by the properties of $Q$, $\Dom(A)=Q^{\beta}(\X)$ is dense in $\X$, so $A$ generates a strongly continuous and contraction semigroup in $\X$ and, so Hypotheses \ref{EU2} are satisfied. Further, let us consider $A_{C}$, the part of $A$ in $H_C=H_{Q^\alpha}$ and recall that
\[
\Dom(A_{C}):=\{x\in Q^{\alpha}(\X)\cap Q^{\beta}(\X)\,|\, Ax\in Q^{\alpha}(\X)\}.
\]
By \eqref{disAX}, for any $x\in \Dom(A_C)$, we have
\begin{equation}\label{disAalfa}
[Ax,x]_C=\langle Q^{-\alpha}Ax, Q^{-\alpha}x\rangle =\langle AQ^{-\alpha}x,Q^{-\alpha}x\rangle \leq -\frac{\|Q^{-\alpha}x\|^2}{2\lambda^{\beta}_1}=-\frac{\norm{x}_C^2}{2\lambda^{\beta}_1}.
\end{equation}
Since $Q^{\alpha+\beta}(\X)$ is dense in $\X$ and $Q^{-\alpha}$ is a close operator in $\X$, then $Q^{\alpha+\beta}(\X)$ is dense in $H_C$, moreover $Q^{\alpha+\beta}(\X)\subseteq \Dom(A_C)$. Hence $A$ generates a strongly continuous and contraction semigroup in $H_C$. By the compactness and positivity of $Q$, we get that $H_{C}$ is dense in $\X$. Moreover Hypotheses \ref{xn} is verified with $\mathcal{X}_n:=\mbox{span}\{e_1,\ldots,e_n\}$.
Let $G$ be a Lipschitz continuous function, it easy to see that $F=Q^{\alpha}G$ is Lipschitz continuous too with Lipschitz constant $\norm{Q^{\alpha}}_{\mathcal{L}(\X)}L_G$. Moreover due to the choice of $Q$ we have $\norm{Q^{\alpha}}L_G\leq \lambda_1^{\alpha}L_G$
and so
\begin{align*}
&\scal{Q^\alpha\J G(x)h}{h}\leq \lambda_1^{\alpha}L_G\norm{h}^2,\qquad x,h\in \X.
\end{align*}
If we assume $L_G<(2\lambda_1^{\alpha+\beta})^{-1}$ then condition \eqref{diss_X} is verified. So Hypotheses \ref{Sinem}(ii) and Hypotheses \ref{xn} are satisfied too. If, in addition Hypotheses \ref{Sinem}(i) are satisfied, then Theorem \ref{lip_harnak} can be applied. This is the case when, for instance, $(A, \Dom(A))$ is a sectorial operator in $\X$ as the next example shows.
}
\end{example}

\begin{example}{\bf{An example in $L^2([0,1],\lambda)$.}}\label{suitSOB}\\{\rm
In the previous example one can take as $\X$ the Hilbert space $L^2([0,1],\lambda)$ where $\lambda$ is the Lebesgue measure and by $-Q^{-1}$ the realization in $L^2([0,1],\lambda)$ of the second order derivative with Dirichlet boundary conditions. With these choices $Q$ turns out to be a positive and trace class operator in $\X$. Thus, if $A$ and $C$ are defined as above, then estimate \eqref{disAX} and \eqref{disAalfa} hold true with $\lambda_1= \pi^{-2}$ and $A$ generates a strongly continuous and analytic semigroup $e^{tA}$ satisfying
\[
\|e^{tA}\|_{\mathcal{L}(\X)}\leq e^{-(1/2)\pi^2t},\qquad t\geq 0
\]
see \cite[Chapter 4]{DA-ZA3}. Moreover by \cite[Proposition 2.1.1]{LUN1}, if $\alpha<\beta$ then \eqref{case1} is verified with $\gamma=\alpha/\beta$.
If, in addition $F$ is a Lipschitz continuous function with Lipschitz constant $L_F<(1/2)\pi^{2\beta}$, then \eqref{diss_X} is verified. In particular Hypotheses \ref{Sinem}(i) and Hypotheses \ref{xn} are verified. Choosing $\alpha=1/2<\beta$, then $H_C$ is the space $W_0^{1,2}([0,1],\lambda)$ and \eqref{condprato} is satisfied too and Theorem \ref{lip_harnak} can be applied.}
\end{example}

\begin{example}{\bf{Infinite dimensional polynomial.}}\\{\rm
In this example we exhibit a class of functions satisfying Hypotheses \ref{Rick}.
To this aim we recall the notion of infinite dimensional polynomial (see \cite{CHAE1,DIN1,MUJ1}). For every $n\in\N$, we say that a map $V:\X^n\ra\X$ is $n$-multilinear if it is linear in each variable separately. A $n$-multilinear map $V$ is said to be symmetric if
\begin{equation}\label{simme}
V(x_1,\ldots,x_n)=V(x_{\sigma(1)},\ldots,x_{\sigma(n)}),
\end{equation}
for any permutation $\sigma$ of the index set $\{1,\ldots,n\}$. We say that a function $P_n:\X\ra\X$ is a homogeneous polynomial of degree $n\in\N$ if there exists a $n$-multilinear symmetric map $V$ such that for every $x\in\X$
$$
P_n(x)=V(x,\ldots,x).
$$
We consider the function $F:\X\ra\X$ defined by
\begin{align*}
F(x):=P_n(x)+\zeta_F x,
\end{align*}
where $x\in\X$, $\zeta_F\in\R$ and $P_n$ is a homogeneous polynomial of degree $n$ such that,
\begin{align}\label{abyss}
\gen{V(h,x,\ldots,x),h}\leq 0,
\end{align}
for any $h, x \in \X$. By \cite[Theorem 3.4]{CHAE1}, there exists $d>0$ such that
\begin{equation}\label{crepol}
\norm{F(x)}\leq d(1+\norm{x}^n),\qquad x\in\X.
\end{equation}
Moreover, for any $x,h\in\X$, we have
\begin{equation*}
\J P_n(x)h=nV(h,x,\ldots,x),
\end{equation*}
and so, by \eqref{abyss}, for any $x,y\in\X$, we obtain
\begin{equation}\label{dispol}
\scal{F(x)-F(y)}{x-y}\leq \zeta_F\norm{x-y}^2.
\end{equation}

In order to give a more concrete application we place ourself in the same scenario as in Example \ref{suitSOB} with $\alpha=1/2<\beta$. Consider a version of $K\in L^2([0,1]^4,\lambda)$ which is symmetric (see \eqref{simme}) and set
\begin{align}\label{polpol}
[P_3(f)](\xi):=\int_0^1\int_0^1\int_0^1 K(\xi_1,\xi_2,\xi_3,\xi) f(\xi_1)f(\xi_2)f(\xi_3)d\xi_1 d\xi_2 d\xi_3
\end{align}
with $f\in L^2([0,1])$. $P_3$ is a homogeneous polynomial of degree three on $L^2([0,1],\lambda)$ (see \cite[Exercise 1.73]{DIN1}). Moreover, condition \eqref{abyss} holds true whenever $K$ has negative values. Indeed observe that, for $f_1,f_2,f_3\in L^2([0,1],\lambda)$,
\[V(f_1,f_2,f_3)=\int_0^1\int_0^1\int_0^1 K(\xi_1,\xi_2,\xi_3,\xi) f_1(\xi_1)f_2(\xi_2)f_3(\xi_3)d\xi_1 d\xi_2 d\xi_3,\]
and for $f,h\in L^2([0,1],\lambda)$
\[\gen{V(h,f,f),h}=\int_0^1\int_0^1\int_0^1\int_0^1 K(\xi_1,\xi_2,\xi_3,\xi) f(\xi_1)f(\xi_2)h(\xi_3)h(\xi)d\xi_1 d\xi_2 d\xi_3 d \xi.\]
A standard argument allows to deduce that $\gen{V(h,f,f),h}=0$ if, and only if, $f=0$ $\lambda$-a.e. or $h=0$ $\lambda$-a.e. So by the continuity of $\gen{V(h,f,f),h}$ with respect to $h$ (for a fixed $f$) and the fact that
\[\gen{V(-h,f,f),-h}=\gen{V(h,f,f),h},\]
the claim follows. In addition, if we assume that $K$ has weak derivative with respect to the fourth variable belonging to $L^2([0,1]^4,\lambda)$, then for any $f\in W^{1,2}([0,1],\lambda)$ it holds that $F(f)=P_3(f)+\zeta_Ff$ belongs to $W^{1,2}([0,1],\lambda)$ (see \eqref{polpol}) and its weak derivative is
\begin{equation}\label{dedeb}
[F(f)]'(\xi)=\int_0^1\int_0^1\int_0^1 \dfrac{\partial K }{\partial \xi}(\xi_1,\xi_2,\xi_3,\xi) f(\xi_1)f(\xi_2)f(\xi_3)d\xi_1 d\xi_2 d\xi_3+\zeta_F f'(\xi).
\end{equation}
If we assume that $(\partial K/\partial \xi)\in L^2([0,1]^4,\lambda)$ has a symmetric version (see \eqref{simme}) with negative value, then by \eqref{dedeb}, estimate \eqref{crepol} and \eqref{dispol} are verified in $W^{1,2}([0,1],\lambda)$. Hence Hypotheses \ref{Rick}(i) is verified and with the choices of $A$ and $C$ in the Example \ref{suitSOB}, all the results in Section \ref{nonome} can be applied.}
\end{example}

\begin{example}{\bf{A reaction-diffusion system.}}\\ {\rm
Assume that $\X=L^2([0,1],\lambda)$ (where $\lambda$ is the Lebesgue measure), $E=C([0,1])$, $A$ is the realization in $L^2([0,1],\lambda)$ of the second order derivative operator with Dirichlet boundary condition and $C=\Id_\X$.
In order to define the function $F$ we consider a decreasing function $\varphi\in C^1(\R)$ such that
\begin{align*}
\abs{\varphi'(\xi)}\leq d_1(1+\abs{\xi}^{m}),\qquad \xi\in\R,
\end{align*}
for some constants $d_1>0$ and $m \in \N$.
Let $\zeta_F>0$. We set
\[ [F(f)](\xi)=
\left\{\begin{array}{ll}
\varphi(f(\xi))-\frac{\zeta_F}{2}f(\xi)^2, & f\in C([0,1]),\ \xi\in [0,1];\\
0, &\mbox{otherwise}.
\end{array}\right.
\]
By \cite[Section 6.1, Lemma 6.1.2 and Lemma 8.2.1]{CER1} and \cite[Example D.7]{DA-ZA3} it follows that Hypotheses \ref{Rick} are verified, in particular \eqref{diss_X} is satisfied.
Finally, taking into account the results in Examples \ref{suitable} and \ref{suitSOB} (when $\alpha=0$ and $\beta=1$), we can conclude that Hypotheses \ref{Sinem}(i) and Hypotheses \ref{xn} are verified too and so Theorem \ref{Harnak_diss} can be applied.}
\end{example}

%\section{Approximation results}\label{appA}
%
%In this section we provide the proof of Proposition \ref{fin_approx} following the ideas of \cite[Proposition 3.1]{DaPROWA09}.
%
%

\end{document}